\title{Generalizations of The Finite Height Criterion for Local Tabularity}
\author{Ilya B.~Shapirovsky}
\address{New Mexico State University, USA}
  \email{ilshapir@nmsu.edu}
\newcommand\hide[1]{{\color{gray} #1}}
\renewcommand\hide[1]\empty
\newcommand\IS[1]{{\bf IS}: {\color{teal} #1}}
\renewcommand\IS[1]\empty
\newcommand\todo[1]{ [~ {\bf {\color{BrickRed} #1 }]}}
\renewcommand\todo[1]\empty
\newcommand\improve[1]{ [~ {\color{BlueGreen}\noindent{\bf Improve:} #1 }]}
\renewcommand\improve[1]\empty
\newcommand\later[1]{ [~ {\color{Red}\noindent{\bf Backlog:} #1 }]}
\renewcommand\later[1]\empty
\theoremstyle{definition}
\newtheorem{theorem}{Theorem}[section]
\newtheorem{proposition}[theorem]{Proposition}
\newtheorem{lemma}[theorem]{Lemma}
\newtheorem{corollary}[theorem]{Corollary}
\newtheorem{definition}[theorem]{Definition}
\newtheorem*{definition*}{Definition}
\newtheorem{remark}[theorem]{Remark}
\newtheorem{example}[theorem]{Example}
\newtheorem{problem}{Problem}
\def\Al{\mathrm{\Omega}}
\def\AlA{\Al}
\def\AlB{\mathrm{\AlA_0}}
\newcommand\framets[1]{#1}
\def\frI{\framets{I}}
\def\frF{\framets{F}}
\def\mM{\framets{M}}
\newcommand\LSuml[2]{{\textstyle {\sum\limits^{\lex}}_{#1}{#2}}}
\def\tiff{\text{ iff }}
\def\clI{\mathcal{I}}
\def\clF{\mathcal{F}}
\def\Log{\myoper{Log}}
\newcommand\myoper[1]{\mathop{\myopts{#1}}}
\newcommand\myopts[1]{\mathrm{#1}}
\def\lex{\mathrm{lex} }
\def\Di{\lozenge}
\def\Dih{{\Di^{\mathrm{h}}}}
\def\Div{{\Di^{\mathrm{v}}}}
\def\Boxh{\Box^{\mathrm{h}}}
\def\DiAl{\Di_\Al}
\def\imp{\rightarrow}
\def\GL{\LogicNamets{GL}}
\def\LC{\LogicNamets{LC}}
\def\Grz{\LogicNamets{Grz}}
\def\DL{\LogicNamets{DL}}
\newcommand\LogicNamets[1]{\logicts{#1}}
\newcommand\logicts[1]{{\textsc{#1}}}
\newcommand\LS[1]{\LogicNamets{S#1}}
\newcommand\LK[1]{\LogicNamets{K#1}}
\def\wK4{\LogicNamets{wK4}}
\def\vL{L}
\def\Alg{\myoper{Alg}}
\def\clA{\mathcal{A}}
\def\clU{\mathcal{U}}
\def\clV{\mathcal{V}}
\def\clC{\mathcal{C}}
\def\clK{\mathcal{K}}
\def\clV{\mathcal{V}}
\def\clS{\mathcal{S}}
\def\clW{\mathcal{W}}
\def\clP{\mathcal{P}}
\def\clQ{\mathcal{Q}}
\def\pwr{\mathbb{P}}
\def\EE{\exists}
\def\AA{\forall}
\def\restr{{\upharpoonright}}
\newcommand\Lind[2]{\myoper{Lind}_{#1}(#2)}
\def\v{\theta}
\def\val{\v}
\def\vext{\bar{\v}}
\newcommand\fragm[2]{{#1}{^{\restr{#2}}}}
\def\vf{\varphi}
\def\mo{\vDash}
\def\vd{\vdash}
\def\con{\wedge}
\def\emp{\varnothing}
\def\se{\subseteq}
\newcommand\languagets[1]{\logicts{#1}}
\def\PV{\Var}
\def\Var{\languagets{Var}}
\def\v{\theta}
\def\vext{\bar{\v}}
\def\iff{\leftrightarrow}
\def\Iff{\Leftrightarrow}
\def\Did{\Di_{\neq}}
\def\Boxd{ {\Box_{\neq}} }
\def\Conv{\noindent {\bf Convention}. }
\newcommand\md[1]{\mathop{\mathrm{md}}{#1}}
\newcommand\mdLog[2]{\mathop{\mathrm{md}_{#2}}{#1}}
\newcommand\mdL[1]{\mdLog{#1}{L}}
\newcommand\h[1]{\mathop{\mathrm{ht}}{#1}}
\newcommand\tra[1]{\mathop{\mathrm{tr}}{#1}}
\newcommand\Ult[1]{\mathop{\mathrm{Ult}}{#1}}
\def\unsymb{\mathrm{u}}
\newcommand\un[1]{{#1}^\unsymb}
\def\tempsymb{\mathrm{t}}
\newcommand\temp[1]{{#1}^{\tempsymb}}
\newcommand\diff[1]{{#1}^{\neq}}
\def\atr{\mathrm{TR}}
\newcommand\clust[1]{\mathop{\mathrm{cl}}#1}
\newcommand\eq[1]{{\equiv_{#1}}}
\newcommand\ff[1]{\bar{#1}}
\begin{document}

\begin{abstract}
It is well known
that for transitive unimodal logics, finite height is
both necessary and sufficient for local tabularity.
It is also well known that for intermediate logics, finite height is
sufficient (but not necessary) for local tabularity.
For non-transitive unimodal, and for polymodal
logics, finite height is necessary (but not sufficient) for local
tabularity.

We discuss generalizations of the
finite height criterion of local tabularity for families of non-transitive and polymodal logics. Then we discuss the finite modal depth
property of modal logics and give a version of the finite height criterion for this property.


\smallskip
\textbf{Keywords:} modal logic, locally tabular logic, locally finite algebra, finite height, pretransitivity,
finite modal depth
\end{abstract}

\maketitle


\section{Introduction}

We study local tabularity of normal propositional (poly)modal logics. Recall that
a logic $\vL$ is {\em locally tabular}, 
if there are only finitely many formulas non-equivalent in $L$ for each finite set of variables. Since logics in our consideration can be viewed as equational theories, local tabularity of a logic is equivalent to local finiteness of the corresponding variety.

\smallskip
In the 1970s, several important results about locally tabular modal and intermediate logics
were obtained.
In the intuitionistic case, there are formulas that
bound the number of elements in every chain in a poset 
\cite{Hosoi1967}.
We call them {\em formulas of finite height}.
There are modal analogs of these formulas that
bound the height of the skeleton of a frame $(X,R)$ with $R$ transitive.
We denote them $B_h$, $h<\omega$.
It turns out that finite height characterizes local tabularity for extensions of
$\LK{4}$, the modal logic of transitive relations.
Namely, in \cite{Seg_Essay} it was shown
that every extension of $\LK{4}$ containing a formula $B_h$ is locally tabular.
It follows from \cite{Maks1975} that there are no other locally tabular logics above $\LK{4}$.
Hence, locally tabular extensions of $\LK{4}$ have an explicit description, both axiomatic and semantic.

It follows from \cite{Seg_Essay} and the Gödel–Tarski translation that intermediate logics of finite height are locally tabular; these results were also obtained independently in \cite{Kuz1973} and \cite{Komori1975Finite}.
Note that in the intuitionistic case, finite height is not necessary for local tabularity: a corresponding example is the Gödel–Dummett logic $\LC$ \cite{Dummett59}.
No axiomatic criterion is known for the case of intermediate logics.
For further results and historical background on local tabularity in the intuitionistic case, see, e.g., \cite{GuramRevaz-2005-Heyt} or a recent paper \cite{Citkin2023},
and references therein.

\improve{looks like}
\improve{(and
as mentioned in \cite{Kom} - by Masahiko Sato; check: \url{https://ems.press/content/serial-article-files/41740})
-terminology? Formulas of finite height?
}
\improve{Mardaev}
\improve{Hosoi - no Kripke frames}
\improve{Maksimova: k4?}

The general picture of non-transitive unimodal, or of polymodal logics
is also unclear. In this case, finite height is necessary, but not sufficient for local tabularity.
It is known that local tabularity implies pretransitivity, some property weaker than transitivity.
 In turn,  pretransitivity allows to express formulas $B^*_h$  that bound the height of the skeleton of a Kripke frame.  In \cite{LocalTab16AiML}, it was shown that every locally tabular logic contains a formula $B_h^*$.
 So pretransitivity and finite height are necessary for all modal logics, including polymodal. However, they do not provide a criterion, as follows from \cite{Byrd78}.
 \improve{
  (to the best of our knowledge, \cite{Byrd78} gives the first example of a pretransitive logic containing a formula of finite height, which is not locally tabular).}

The goals of this paper are to discuss classical results on locally tabular modal logics and their possible generalizations, and to
identify new families of locally tabular  modal logics, in particular
to generalize the finite height criterion for some families of polymodal logics.

\later{\IS{Section 5-  what to say...}
One of these families is described in Theorem \ref{thm:FinH-crit-canon}, which is based on \cite{LocalTab16AiML}.   
Another is described in Theorem \ref{thm:sumAFH} in terms of {\em lexicographic sums of logics}, which are known to preserve local tabularity \cite{LTViaSums}.

Say about clusters explicitly!
}
The paper is organized as follows.
In Section \ref{sec:prel} we remind the main logical and algebraic notions key for our work.
Sections \ref{sec:nec} and \ref{sec:tuned} are primarily overview of classical and some recent results, and a discussion of their   generalizations.
In Section \ref{sec:nec}, we discuss necessary conditions for local tabularity:
finite height; finiteness of a $k$-variable fragment for a fixed $k$; a property of path reducibility. While none of them are sufficient
in general, they imply local tabularity in some narrowed, but interesting, cases.
In particular, we discuss {\em logics that admit the finite height criterion}.
In the unimodal case, this notion means that an extension of the logic is locally tabular iff
it contains a finite height axiom; the polymodal version requires presence of finite height axioms in fragments of the logic.
In Section \ref{sec:tuned}, following \cite{LocalTab16AiML}, we characterize local finiteness and local tabularity in terms of partitions of frames.
In Section \ref{sec:FHC}, we describe some families of polymodal logics which admit the finite height criterion; see Theorems \ref{thm:FinH-crit-canon} and \ref{thm:sumAFH} (these results are based on  \cite{LocalTab16AiML} and  \cite{LTViaSums}).
Section \ref{sec:fmd} is the most technical part of the work.  Here we discuss the {\em finite modal depth} of logics, a property which is at least as strong as local finiteness (but unknown to be stronger).
Many results in this direction were obtained in \cite{Sheht-MD-16}; in particular, it was shown that
above $\LK{4}$, local tabularity and finite modal depth are equivalent.
Our main result in this section is Theorem \ref{thm:md:clusters}, which shows that
in the case of finite height, the finite modal depth is inherited from clusters.
In particular, it
describes a family of logics where finite modal depth is equivalent to finite height, see Corollary \ref{cor:md-cluster-crit}.
It also allows  us to show that local tabularity is equivalent to finite modal depth for a family of non-transitive logics,
and to obtain new upper bounds for modal depth of various logics.

\hide{

Sections 3 and 4 are primarily overview of classical and more recent results on local tabularity in modal logics. In Section 3 we discuss the finite height criterion for transitive logic, a  ., and give a list of necessary conditions for all modal logics. Also, we formulate the notion of ....  for the polymodal case
and observe that ... implies .... ...  We discuss an important result of .....(cluster criterion).
In Section 5, we identify a family of polymodal logics that admit the finite height criterion.
In Section 6 we study a stronger property of {\em  finite modal depth}. We give semantic criterions of this criterion and prove an analog of the cluster criterion  for finite modal depth.

We use it to refine some of earlier obtained upper bounds for modal depth earlier obtained in \cite{Sheht-MD-16},
and also prove the finite modal depth of new logics.

...

Then we discuss the finite modal depth
property of logics and give a version of the finite height criterion for this property.

We list a number of necessary condition of local tabularity:
...
It turns out that in many cases, they are sufficient.

or (for the modal case) in the skeleton.

Namely, in \cite{Seg_Essay} it was shown
that every extension of $\LK{4}$ with a formula $B_h$ of finite height is locally tabular.
In  \cite{Maks1975}, it was shown that there are no other locally tabular logics above $\LK{4}$. \later{check; give refs}.  It follows \cite{Seg_Essay} and the Gödel–Tarski translation that intermediate logics of finite height

It \cite{Seg_Essay}, it was shown that the extensions of $\LK{4}$, the logic of transitive relations,

(a logic is transitive, if it contains the formula $\Di\Di p\imp \Di p$, which expresses the transitivity of a binary relation).

Local tabularity is well-studied for the case of unimodal transitive logics (a logic is transitive, if it contains the formula $\Di\Di p\imp \Di p$, which expresses the transitivity of a binary relation).
According to the classical results by Segerberg \cite{Seg_Essay} and Maksimova \cite{Maks1975},
a transitive logic is locally tabular iff it contains a modal {\em formula $B_h$ of finite height} for some finite $h$.

We study local tabularity of normal propositional (poly)modal logics.

A logic $\vL$ is {\em locally tabular},
if, for every finite $k$, $L$ contains only a finite number of pairwise nonequivalent
formulas in a given $k$ variables.
Equivalently, a logic $\vL$ is locally tabular iff the variety of its algebras is locally finite, i.e., every $L$-algebra is locally finite.
Local tabularity is a strong property:
if a logic is locally tabular, then it has the finite model property (thus it is Kripke complete);
every extension of a locally tabular logic is locally tabular (thus it has the finite model property);
every finitely axiomatizable extension of a locally tabular logic is decidable.

Local tabularity is well-studied for the case of unimodal transitive logics (a logic is transitive, if it contains the formula $\Di\Di p\imp \Di p$, which expresses the transitivity of a binary relation).
According to the classical results by Segerberg \cite{Seg_Essay} and Maksimova \cite{Maks1975},
a transitive logic is locally tabular iff it contains a modal {\em formula $B_h$ of finite height} for some finite $h$. The result of Segerberg

It is well-known that for intermediate logics, finite height is
sufficient (but not necessary) for local tabularity (T. Hosoi, 1967; A.
V. Kuznetsov, 1974). For transitive unimodal logics, finite height is
both necessary and sufficient for local tabularity (K. Segerberg, 1971;
L.L. Maksimova, 1975). For non-transitive unimodal, and for polymodal
logics, finite height is necessary (but not sufficient) for local
tabularity. We will provide a review of classical results on local
tabularity in non-classical logics. Then we will discuss recent
developments in the field, particularly generalizations of the
Segerberg-Maksimova criterion for non-transitive and polymodal cases.

Give main definitions here.  Describe history. Explain the clusters issue. Describe the structure and results.

\IS{Main notions; Local tab; Eq. theories, so it corresponds to the algebraic notion of local finiteness}

}

\improve{continuum on height 3.}
\improve{Explain the clusters issue. }

\section{Terminology and notation}\label{sec:prel}

In this section we remind the main logical and algebraic notions key for our work, and establish the notation used throughout.
For the notions in modal logic, we mainly follow the monographs \cite{CZ}, \cite{BDV}; for the algebraic part,
we refer to \cite{Malcev73}, \cite{BS}, and also to \cite[Appendix B]{BDV}.

Many logical notions considered in this work have purely algebraic counterpart, and vice versa; so in many cases we give dual definitions and statements.

\subsection{Modal language and models}

\subsubsection*{\underline{Language}} Fix a finite set $\Al$, the {\em alphabet of modalities}.
{\em Modal formulas over $\AlA$} are constructed from
a countable set of {\em variables} $\PV=\{p_0,p_1,\ldots\}$ using Boolean connectives 
and unary connectives $\Di\in \AlA$.
The expression $\Box\vf$
denotes $\neg \Di \neg \vf$.
A {\em $k$-formula} is
a formula whose variables are among the set $\{p_i\mid i<k\}$.

\subsubsection*{\underline{Relational models}}

For a set $X$ and a binary relation $R\subseteq X\times X$, we write $aRb$ for $(a,b)\in R$.
We let $R(a)=\{b\mid aR b\}$,
$R[Y]=\bigcup_{a\in Y} R(a)$.

A {\em Kripke frame} is a structure  $\frF=(X,(R_\Di)_{\Di\in \AlA})$,
where $X$ is a set and ${R_\Di\se X{\times}X}$ for $\Di\in \AlA$.
A {\em general frame} is a structure $(X,(R_\Di)_{\Di\in \AlA},\clP)$, where
$\clP$ forms a subalgebra of the powerset Boolean algebra $\pwr(X)$ such that for each $V\in \clV$, and each $\Di\in\Al$,
$R_\Di^{-1}[V]\in \clV$.  The set $X$ is addressed as the {\em domain} of $F$.
\improve{Is it clear? - For a set $X$, $\pwr(X)$ denotes the set of its subsets.}

Let $k\leq\omega$. A {\em $k$-valuation in a frame $F$} is a map from $\{p_i\mid i<k\}$ to $\clP$.
In the case when $F$ is a Kripke frame, $\clP$ is assumed to be the powerset of $X$.
A {\em $k$-model $M$ on} $\frF$ is a pair $(\frF,\theta)$, where $\theta$ is a $k$-valuation.
To define the \emph{truth relation $\mo$} between points in a $k$-model $M$ and $k$-formulas, we interpret Boolean connectives in the usual way, and set
${M,a\mo\Di \vf}$, if ${M,b\mo\vf}$  for some $b$ in $R_\Di(a)$.
We put
$$\vext(\vf)=\{a\mid (F,\v),a\mo\vf\}.$$
A $k$-formula $\vf$ is {\em true in a $k$-model $\mM$}, in symbols $M\mo \vf$, if $\mM,a\mo\vf$ for all $a$ in $\mM$.
A $k$-formula $\vf$ is {\em valid in a frame $\frF$}, in symbols $\frF\mo\vf$,
if $\vf$ is true in every $k$-model on $\frF$.

\subsubsection*{\underline{Algebraic models}}
A {\em modal algebra} $A$ is a~Boolean algebra endowed
with unary operations $g_\Di$,  $\Di\in\Al$,
that satisfy the identities $\Di 0=0$ and  $\Di (x\vee y)=\Di x\vee \Di y$.
A modal formula $\vf$ is {\em valid} in an algebra $A$, if the identity $\vf=1$ holds in $A$.

Every frame defines a modal algebra.  Namely, for a binary relation $R$ on a set $X$, the operation $g:\pwr(X)\to \pwr(X)$ is {\em induced by $R$},
if $g(V)=R^{-1}[V]$ for $V\subseteq X$.
The {\em algebra $\Alg{F}$ of an $\Al$-frame}  $\frF=(X,(R_\Di)_{\Di\in \AlA},\clP)$ is the
Boolean algebra $\clP$ (considered with the standard set operations) endowed with the operations
induced by $R_\Di$, $\Di\in \Al$.
As before, in the case of Kripke frame $(X,(R_\Di)_{\Di\in \AlA})$, we let $\clP$ be the powerset of $X$. So  the algebra $\Alg(X,(R_\Di)_{\Di\in \AlA},\clP)$ is a subalgebra of
the algebra $\Alg(X,(R_\Di)_{\Di\in \AlA})$.
It is immediate that  the algebra of a frame is a modal algebra, and that
in a $k$-model $(\frF,\theta)$,  $\vext(\vf)$  is the value of $\vf$ (considered as an algebraic term in $k$ variables) under $\theta$ (considered as an algebraic assignment) in $\Alg{F}$; details can be found in, e.g,  \cite[Section 5.2]{BDV}.
\improve{dom of g?}

\smallskip
A formula is {\em valid in a class} $\clK$ of (relational or algebraic) structures, if it is  valid in every structure in $\clK$.
Validity of a set of formulas means validity of every formula in this set.

\smallskip

\Conv
When we speak about validity in a class of frames or in a class of algebras, it is always assumed that this class
consists of structures over the same signature.

\smallskip

\subsubsection*{\underline{Representation theorem}}
In fact, every modal algebra is isomorphic to the algebra of a general frame (equivalently, embeds into the algebra of a Kripke frame).
Recall that for a Boolean algebra $A$, the {\em Stone map} embeds $A$ into the powerset algebra $\pwr(\Ult{A})$, where $\Ult{A}$ is the set of ultrafilters of $A$; the map is defined as
$
a \mapsto \{u\in \Ult A\mid a\in u\}.
$
The J\'{o}nsson-Tarski theorem generalizes this result for modal algebras: the Stone map embeds a modal algebra $A$ into the algebra of the Kripke frame $(\Ult{A}, (R_\Di)_{\Di\in\Al})$, where now $\Ult{A}$ is the set of ultrafilters of the underlying Boolean algebra of $A$, and for $\Di\in\Al$, $u,v\in \Ult A$,
$$u R_\Di v  \tiff \AA a\text{ in } A \,(a\in v \Rightarrow  g_\Di(a) \in u).$$
Hence, letting $\clP\subseteq \Ult{A}$ be the Stone image of $A$, we obtain that $A$ is isomorphic to the algebra of the general frame $(\Ult{A}, (R_\Di)_{\Di\in\Al},\clP)$.

\subsection{Modal logics and varieties. Algebraic and Kripke completeness}

\subsubsection*{\underline{Normal logics}}
A  {\em normal } {\em logic} {\em over the alphabet $\Al$} (or just a {\em logic}) is a set $\vL$ of formulas
over $\Al$ that contains all classical tautologies, the formulas $\neg\Di \bot$  and
$\Di  (p_0\vee p_1) \imp \Di  p_0\vee \Di  p_1$ for each $\Di$  in $\AlA$, and is closed under the rules of modus ponens,
substitution and the rule of {\em monotonicity}, which means that for each $\Di$ in $\AlA$, $\vf\imp\psi\in \vL$ implies $\Di \vf\imp\Di \psi\in \vL$.
The smallest logic over the alphabet $\Al$ is denoted by $\LK{}_\Al$. If also $\Phi$ is a set of formulas over $\Al$, we write $\LK{}_{\Al}+\Phi$ to indicate the smallest logic over $\Al$ that contains $\Phi$ (equivalently, contains $\LK{}_{\Al}\cup \Phi$).
An {\em $L$-structure} (a frame or algebra) is a structure where $L$ is valid.

The set of all formulas that are valid in a class $\clK$ of frames or algebras is called the {\em logic of} $\clK$, in symbols: $\Log{\clK}$. For a single structure $B$, we write $\Log{B}$ for $\Log{\{B\}}$.
It is straightforward that  $\Log{\clK}$ is a normal logic.

\Conv
It is a standard convention that the domain of a frame $F$ is non-empty. We do not require this; in particular, the logic of the empty $\Al$-frame is well-defined -- this is just the set of all $\Al$-formulas (the {\em inconsistent logic over $\Al$}), and its algebra is trivial.

\improve{removed:
Moreover, any propositional normal modal logic $L$ is the logic of the class of $L$-algebras, see, e.g.,
\cite[Section 7.5]{CZ}. }

\subsubsection*{\underline{Normal logics as equational theories}}
Normal logics can be equivalently described in terms of identities, valid in modal algberas.
Recall that an {\em equational theory} is the set of all valid identities in a class of algebras.
A {\em variety} is the class of algebras where a given set of identities is valid \cite{Malcev73}.
To relate  normal logics and equational theories,
for a logic $L$  over $\Al$, let $E[L]$ be the set of identities
$\{\vf = \psi \mid \vf \leftrightarrow \psi \in L\}$,
and for the equational theory $E$ of a class of modal algebras over $\Al$, let $L[E]$ be the set of modal formulas $\{\vf \mid \vf = 1 \in E\}$.
It is straightforward that for an equational theory $E$, $L[E]$ is a normal logic, and it is a standard fact
that for a logic $L$, $E[L]$ is an equational theory.
It follows that
modal logics correspond to
varieties of modal algebras.
Another important fact is that the well-known logical construction of the  Lindenbaum algebra $\Lind{L}{k}$ of $L$ over $k$ variables results in the well-known algebraic construction of the free $k$-generated algebra in the variety of $L$-algebras.
See, e.g., \cite[Chapter 7]{CZ} or \cite[Chapter 5]{BDV} for details.
The following standard fact  refines previous observations:
\improve{ Tarski; relate - wording}
\improve{\IS{extensions are undefined}, Hence, modal logics correspond to varieties of modal algebras, and more specifically,
extensions of a logic correspond to subvarieties of the variety of its algebras.}
\begin{proposition}\label{prop:lind} Let $L$ be a logic over $\Al$.
\begin{enumerate}
\item For $k\leq\omega$,   for every $k$-formula over $\Al$, we have:
$\vf\in L\text{ iff }\Lind{L}{k}\mo \vf$. 
\item For every formula over $\Al$,
TFAE:
\begin{enumerate}[(a)]
  \item $\vf\in L$.
  \item For each $k<\omega$, $\Lind{L}{k}\mo \vf$.
  \item $\Lind{L}{\omega}\mo \vf$.
\end{enumerate}
\end{enumerate}
\end{proposition}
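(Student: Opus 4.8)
The plan is to reduce both items to the defining property of the Lindenbaum algebra $\Lind{L}{k}$, whose elements are the $\eq{L}$-classes $[\vf]$ of $k$-formulas (where $\vf\eq{L}\psi$ abbreviates $\vf\lra\psi\in L$), whose Boolean and modal operations act on representatives, with unit $1=[\top]$ and $g_\Di[\psi]=[\Di\psi]$. Two elementary facts will carry the argument. First, for any $k$-formula $\vf$ we have $[\vf]=1$ iff $\vf\eq{L}\top$ iff $\vf\in L$, using that $L$ contains the classical tautologies and is closed under modus ponens to pass between $\vf$ and $\vf\lra\top$. Second, under the canonical valuation $\theta_L$ given by $\theta_L(p_i)=[p_i]$, the value of a $k$-formula $\vf$ equals $[\vf]$; I would prove this by a routine induction on $\vf$, the Boolean and modal steps being immediate since the operations of $\Lind{L}{k}$ are defined on representatives.

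For item (1) the easy direction is $\Lind{L}{k}\mo\vf\Rightarrow\vf\in L$: validity in the algebra includes truth under $\theta_L$, so the value of $\vf$ under $\theta_L$ is $[\vf]=1$, whence $\vf\in L$ by the first fact. The converse, $\vf\in L\Rightarrow\Lind{L}{k}\mo\vf$, is where the work lies, since validity quantifies over all valuations, not just the canonical one. Here I would use that every element of $\Lind{L}{k}$ is a class $[\psi]$ of some $k$-formula, so an arbitrary valuation $\theta$ assigns to each variable $p_j$ occurring in $\vf$ a class $[\psi_j]$; letting $\sigma$ be the substitution $p_j\mapsto\psi_j$, a substitution lemma (the same induction as before) gives that the value of $\vf$ under $\theta$ equals $[\vf^\sigma]$, where $\vf^\sigma$ is again a $k$-formula. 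Since $L$ is closed under substitution, $\vf\in L$ yields $\vf^\sigma\in L$, hence the value is $1$; as $\theta$ was arbitrary, $\Lind{L}{k}\mo\vf$. This argument is insensitive to whether $\vf$ is literally a $k$-formula, because the substitution always lands back among $k$-formulas; so in fact $\vf\in L$ implies $\Lind{L}{k}\mo\vf$ for every $k\le\omega$.

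Item (2) then follows formally. Any formula $\vf$ uses only finitely many variables, so it is a $k$-formula for all sufficiently large $k<\omega$, and it is trivially an $\omega$-formula. Applying item (1) with $k=\omega$ gives $\vf\in L$ iff $\Lind{L}{\omega}\mo\vf$. For the middle condition, if $\vf\in L$ then by the closure-under-substitution argument above $\Lind{L}{k}\mo\vf$ for every $k<\omega$; conversely, if $\Lind{L}{k}\mo\vf$ holds for all $k<\omega$, I would instantiate at a single $k$ large enough that $\vf$ is a $k$-formula and invoke item (1) to conclude $\vf\in L$. The main obstacle is thus the universal quantification over valuations in the definition of validity: the genuinely substantive point is that the canonical valuation is generic in $\Lind{L}{k}$, which is exactly the closure of $L$ under substitution combined with the fact that the generators $[p_i]$ generate the algebra. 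Everything else is bookkeeping.
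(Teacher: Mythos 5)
Your proof is correct, and it is essentially the argument the paper intends: the paper states this proposition as a standard fact without proof, deferring to the cited textbooks, and your argument — operations on representatives, the canonical valuation $\theta_L$ with value of $\vf$ equal to $[\vf]$, and genericity of $\theta_L$ via closure of $L$ under substitution — is exactly that standard Lindenbaum--Tarski argument. You also correctly isolate and handle the one delicate point, namely that $\vf\in L$ gives $\Lind{L}{k}\mo\vf$ even when $\vf$ is not a $k$-formula (since any valuation factors through a substitution into $k$-formulas), which is precisely what item (2) requires for small $k$.
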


\improve{Refs}

\improve{

The {\em $k$-canonical model of} $\vL$ is built from
maximal $\vL$-consistent sets of $k$-formulas; the canonical relations and the valuation are defined in the standard way.  The following fact is well-known, see, e.g., \cite[Chapter 8]{CZ}.
\begin{proposition}\label{prop:k-canonical-model}[Canonical Model Theorem]
Let $\mM$ be the  $k$-canonical model of a logic $\vL$. Then
for all $k$-formulas $\vf$ we have:
$\vL\vd\vf$ iff $\vf$ is true in $M$.
\end{proposition}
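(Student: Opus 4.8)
The plan is to argue directly from the construction of $\Lind{L}{k}$. Recall that its elements are the classes $[\vf]$ of $k$-formulas under the relation $\vf\sim\psi\iff\vf\lra\psi\in L$, with the Boolean and modal operations induced representative-wise; the unit is the class $[\top]$ of theorems, so that $[\vf]=1$ in $\Lind{L}{k}$ iff $\vf\in L$. First I would record the routine \emph{substitution lemma}: for any $k$-valuation $\theta$ in $\Lind{L}{k}$, say $\theta(p_i)=[\psi_i]$, and any $k$-formula $\vf$, the value of $\vf$ under $\theta$ equals $[\vf^\sigma]$, where $\vf^\sigma$ is the simultaneous substitution of $\psi_i$ for $p_i$ in $\vf$. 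This is an induction on $\vf$; the atomic and Boolean steps are immediate, and the modal step uses only that the operations of $\Lind{L}{k}$ act on representatives. In particular, under the canonical valuation $\theta_0(p_i)=[p_i]$ the value of $\vf$ is simply $[\vf]$.

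With this in hand, item (1) is immediate in both directions, and this is the crux of the statement. If $\vf\in L$, then since $L$ is closed under substitution every instance $\vf^\sigma$ lies in $L$; by the substitution lemma $\vf$ then takes the value $1$ under \emph{every} valuation, so the identity $\vf=1$ holds in $\Lind{L}{k}$, i.e.\ $\Lind{L}{k}\mo\vf$. Conversely, if $\Lind{L}{k}\mo\vf$, then in particular $\vf$ evaluates to $1$ under $\theta_0$, whence $[\vf]=[\top]$, i.e.\ $\vf\in L$. The genuine content here is that validity---truth under \emph{all} valuations---reduces to truth under the single canonical valuation $\theta_0$, and it is precisely closure of $L$ under substitution (a defining condition of a normal logic) that licenses this reduction. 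The argument is insensitive to whether $k$ is finite or $\omega$.

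Item (2) then follows from (1) together with the observation that every formula $\vf$ uses only finitely many variables, hence is an $n$-formula for all large enough $n$ and, trivially, an $\omega$-formula. The equivalence of $\vf\in L$ with $\Lind{L}{\omega}\mo\vf$ is just (1) at $k=\omega$. If $\vf\in L$, the substitution argument above also gives $\Lind{L}{k}\mo\vf$ for each finite $k$; and conversely, if $\Lind{L}{k}\mo\vf$ holds for every finite $k$, then choosing some $k$ large enough that $\vf$ is a $k$-formula and applying (1) returns $\vf\in L$. The only point requiring any care is the substitution lemma, and inside it the identification of a valuation into $\Lind{L}{k}$ with a substitution; I expect this to be the main (though modest) obstacle, since everything afterward is a direct unwinding of the definitions of $\Lind{L}{k}$ and of validity in an algebra.
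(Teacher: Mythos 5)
Your argument proves a different statement from the one in question. What you establish is the paper's Proposition~\ref{prop:lind} about the Lindenbaum algebra, namely that $\vf\in \vL$ iff $\Lind{L}{k}\mo\vf$ (the ``items (1) and (2)'' you refer to belong to that proposition), whereas the statement here concerns the $k$-canonical \emph{model} $\mM$: the Kripke model whose points are the maximal $\vL$-consistent sets of $k$-formulas, with the canonical relations and the canonical valuation, and the claim is that $\vL\vd\vf$ iff $\vf$ is true at every point of $\mM$. Your proposal never mentions maximal consistent sets, the canonical relations, or truth at points of a Kripke model, so nothing in it connects algebraic validity in $\Lind{L}{k}$ with truth in $\mM$; the two notions are related only through a nontrivial bridge that you omit entirely.

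That bridge is exactly the substantive content of the Canonical Model Theorem: the Truth Lemma, stating that for every maximal $\vL$-consistent set $\Gamma$ of $k$-formulas and every $k$-formula $\vf$, one has $\mM,\Gamma\mo\vf$ iff $\vf\in\Gamma$. Its proof is an induction on $\vf$ whose modal case requires the Existence Lemma (if $\Di\vf\in\Gamma$, then some canonical $R_\Di$-successor of $\Gamma$ contains $\vf$), which in turn rests on Lindenbaum's lemma on extending consistent sets --- none of which appears in your proposal. Given the Truth Lemma, both directions are quick: theorems lie in every maximal consistent set, hence are true everywhere in $\mM$; and if $\vf\notin\vL$, then $\{\neg\vf\}$ is consistent, extends to a maximal consistent set, and that point refutes $\vf$. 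Alternatively, if you wish to salvage your algebraic route, you must add the J\'{o}nsson-Tarski step: the canonical frame is the ultrafilter frame of $\Lind{L}{k}$, the Stone map is a \emph{modal} algebra embedding (preservation of the modal operators here is precisely the algebraic counterpart of the Existence Lemma), and the canonical valuation is the Stone image of the generators; only then do truth sets in $\mM$ compute the Stone images of algebra elements, after which your reduction to the canonical assignment finishes the job. In short, the difficulty is not in the substitution lemma you flag as the main obstacle, but in the passage from the algebra to the model, which is missing. (Note also that the paper itself gives no proof of this proposition; it cites it as a standard fact, so the comparison here is against the standard argument.)
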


}


\subsubsection*{\underline{Kripke completeness and the finite model property}}
It follows from previous facts that every normal logic is {\em algebraically complete}: it is the set of formulas valid in a class of modal algebras.
Due to the representation theorem, every logic is the logic of a class of general frames.
The following property is stronger:
a logic $L$ is {\em Kripke complete}, if $L$ is the logic of a class of Kripke frames.
Even stronger than Kripke completeness property is completeness with respect to finite structures.
Since the Stone embedding of a finite Boolean algebra is an isomorphism, every finite modal algebra is isomorphic to the
modal algebra of a finite Kripke frame. It follows that $L$ is the logic of a class of finite algebras iff it is
the logic of a class of finite Kripke frames. It this case we say that  $L$ has the {\em finite model property}.

\subsubsection*{\underline{Extensions, expansions, and fragments}}
We fix some terminology here. Let $L$ be a  logic over the alphabet $\Al$.

If  $L_1\supseteq L$ and $L_1$ is a logic over the same alphabet $\Al$, then we say that $L_1$ is an {\em extension of $L$}.
By an {\em expansion of $L$} we mean a logic $L_1\supseteq L$, where $L_1$ is considered  over any alphabet of modalities.

For $\AlB\subseteq \AlA$, let $L^{\restr \AlB}$ denote the set of all formulas over $\AlB$ which are in $L$.
\improve{
The logic $L$ is a {\em conservative expansion of a logic $L_0$},  if $L_0=L^{\restr \AlB}$ for some $\AlB\subseteq \AlA$.
(Of course,   the term  {\em conservative extension} is more common, but we reserve {\em extensions} for the logics over the same alphabet.)\IS{we do not use it}.)}

Let 
$B$ be a boolean algebra, $A=(B,(f_\Di)_{\Di\in\AlA})$ a modal algebra.
The algebra $A^{\restr \AlB}=(B,(f_\Di)_{\Di\in\AlB})$ is called the {\em $\AlB$-reduct of $A$}, and $A$ an {\em expansion of} $A^{\restr \AlB}$; for a class $\clK$ of algebras, put $\clK^{\restr \AlB}=\{A^{\restr \AlB}\mid A\in\clK\}$.
The following is standard:
\begin{equation}\label{eq:fragm-alphabet}
\text{$\Log(\clK)^{\restr \AlB}=\Log(\clK^{\restr \AlB})$.}
\end{equation}
In particular, $L^{\restr \AlB}$ is a normal logic.
\later{This is a simplest example of a fragment of $L$. }

Now let $\Psi$ be a set of one-variable formulas over $\Al$.
Let us consider $\Psi$ as a new alphabet of unary connectives.
Formally,  we have the following trivial translation $\vf\mapsto [\vf]$ of modal formulas over $\Psi$ to modal formulas over $\AlA$: it
 is identical on variables, compatible with Boolean connectives, and defined as $[\Di\vf]=\psi([\vf])$ for $\Di=\psi\in\Psi$. Let $\fragm{L}{\Psi}$ be the set of formulas $\vf$ over the alphabet $\Psi$ such that $[\vf]\in L$.
 Note that in general, $\fragm{L}{\Psi}$ is not a logic.
 If $\fragm{L}{\Psi}$ is a logic, then it is called a ({\em normal}) {\em  fragment of $L$},
 and formulas in $\Psi$ are called {\em compound modalities in $L$}.

\begin{example}
If $\AlB\subseteq \AlA$, then $L^{\restr \AlB}$   
can be considered as a simplest example of a fragment of $L$ (let $\Psi=\{\Di p\}_{\Di\in \AlB}$).
 It is not difficult to check that $\Di_1 p\vee \Di_2 p$ or $\Di_1\Di_2 p$ are compound modalities, and so also result in fragments. \improve{It is not difficult to describe
 all fragments of a logic.   }
\end{example}

\improve{not very good}
Semantically, any set of one-variable formulas $\Psi$ over $\Al$ induces Boolean algebras with additional unary operations given by $\Psi$. Formally, for a modal algebra $A=(B,(f_\Di)_{\Di\in\AlA})$, let $A^{\restr \Psi}=(B,(\psi^A)_{\psi\in\Psi})$,
where $\psi^A$ is the interpretation of $\psi$ in $A$.
Trivially, if $\vf$ is a formula over $\Psi$, then
under any assignment in $B$, the value of $\vf$ in $A^{\restr \Psi}$ is the value of $[\vf]$ in $A$.
Note that $A^{\restr \Psi}$ need not be a modal algebra,
but in any case we have for each $\vf$, $\xi$ over the alphabet $\Psi$:
\begin{equation}\label{frag:ident}
  A^{\restr \Psi}\mo \vf=\xi \text{ iff }A\mo[\vf]=[\xi].
\end{equation}
For a class $\clK$ of algebras, let $\clK^{\restr \Psi}=\{A^{\restr \Psi}\mid A\in\clK\}$.

We have the following characterization of fragments:
\begin{proposition}
For a logic $L$ and a set $\Psi$ of one-variable formulas over $\Al$, TFAE:
\begin{enumerate}[(a)]
  \item $L^{\restr \Psi}$ is a fragment of $L$.
  \item  For each $\psi\in\Psi$, $L$ contains
  $\neg\psi(\bot)$ and $\psi(p_0\vee p_1) \leftrightarrow  \psi(p_0)\vee \psi(p_1)$.
  \item For each $L$-algebra $A$,   $A^{\restr \Psi}$ is a modal algebra.
\end{enumerate}
\end{proposition}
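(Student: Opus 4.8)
The plan is to route everything through the algebraic reformulation \eqref{frag:ident}, which converts identities over the new alphabet $\Psi$ into identities over $\Al$. Recall that, by definition, $A^{\restr \Psi}$ is a modal algebra exactly when every operation $\psi^A$ is normal and additive, i.e.\ $A^{\restr\Psi}$ validates $\Di\bot=\bot$ and $\Di(p_0\vee p_1)=\Di p_0\vee\Di p_1$ (with $\Di$ the connective named by $\psi$). By \eqref{frag:ident} these hold in $A^{\restr\Psi}$ iff $A$ validates $\psi(\bot)=\bot$ and $\psi(p_0\vee p_1)=\psi(p_0)\vee\psi(p_1)$. Thus (c) is literally the statement that every $L$-algebra validates these two identities, i.e.\ that $L$ contains $\neg\psi(\bot)$ and the full distributive law for each $\psi\in\Psi$. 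This algebraic dictionary is the tool I will use for every direction.

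For (a)$\Rightarrow$(c) I would argue as follows. If $\fragm{L}{\Psi}$ is a normal logic over $\Psi$, then for each connective $\Di\in\Psi$ it contains the axioms $\neg\Di\bot$ and $\Di(p_0\vee p_1)\imp\Di p_0\vee\Di p_1$, and, being closed under monotonicity, also $\Di p_0\imp\Di(p_0\vee p_1)$ and $\Di p_1\imp\Di(p_0\vee p_1)$, hence the converse implication $\Di p_0\vee\Di p_1\imp\Di(p_0\vee p_1)$. Translating these membership facts through $[\cdot]$ shows that $L$ contains $\neg\psi(\bot)$ together with both implications, i.e.\ the full distributive identity; by the previous paragraph this is exactly (c). The direction (a)$\Rightarrow$(b) is then immediate, since (b) lists the translations of two of the axioms already used here.

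For (c)$\Rightarrow$(a) I would put $\clK=\{A^{\restr\Psi}\mid A\text{ an }L\text{-algebra}\}$, which by (c) is a class of modal algebras over the common alphabet $\Psi$. Applying \eqref{frag:ident} with $\xi=\top$ gives $A^{\restr\Psi}\mo\vf$ iff $A\mo[\vf]=1$, so for every $\vf$ over $\Psi$ we get the chain $\vf\in\fragm{L}{\Psi}$ iff $[\vf]\in L$ iff $A\mo[\vf]$ for all $L$-algebras $A$ iff $A^{\restr\Psi}\mo\vf$ for all such $A$, that is, $\fragm{L}{\Psi}=\Log(\clK)$. Since the logic of a class of modal algebras over one alphabet is normal, $\fragm{L}{\Psi}$ is a logic, which is (a).

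The delicate point, and the step I expect to be the main obstacle, is closing the cycle starting from (b): passing from the one-directional law in (b) to the \emph{additivity} demanded by (c) and to the monotonicity closure demanded by (a). Algebraically (b) supplies only $\psi^A(x\vee y)\le\psi^A(x)\vee\psi^A(y)$ and $\psi^A(0)=0$, whereas a modal algebra needs the equality, equivalently the reverse inequality $\psi^A(x)\vee\psi^A(y)\le\psi^A(x\vee y)$, equivalently monotonicity of $\psi^A$. A one-variable formula need not denote a monotone operation: for example $\Di p\wedge\neg p$ satisfies $\neg\psi(\bot)$ and $\psi(p_0\vee p_1)\imp\psi(p_0)\vee\psi(p_1)$ yet is not monotone, so the one-sided law alone does not force additivity. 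Hence in this direction I would either have to exhibit an extra mechanism that derives the converse implication $\psi(p_0)\vee\psi(p_1)\imp\psi(p_0\vee p_1)$, or read (b) with the distributive \emph{biconditional} (equivalently, build in monotonicity); I would make this reading explicit at the outset so that the translation in the first paragraph yields the full identity needed for (c).
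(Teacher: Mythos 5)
Your analysis is correct, and the difficulty you isolated in your last paragraph is not a gap in your argument but a genuine flaw in the proposition as stated. The paper's own proof runs the cycle (a)$\Rightarrow$(b)$\Rightarrow$(c)$\Rightarrow$(a) and disposes of (b)$\Rightarrow$(c) with the single remark that it ``readily follows from \eqref{frag:ident}''; that is exactly the step that breaks. Equation \eqref{frag:ident} transfers \emph{identities}, while (b) only yields, in each $L$-algebra $A$, the inequality $\psi^A(x\vee y)\le\psi^A(x)\vee\psi^A(y)$ together with $\psi^A(0)=0$, whereas the paper's definition of a modal algebra (operations distributing over finite disjunctions) demands the equality. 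Your counterexample is valid: take $L=\LK{}$ and $\psi(p)=\Di p\con\neg p$. Both formulas of (b) lie in $\LK{}$ (the first because $\neg\Di\bot\in\LK{}$, the second because it is valid in all Kripke frames and $\LK{}$ is Kripke complete), yet in the algebra $A$ of the two-point frame $(\{x,y\},\{(x,y)\})$ one computes $\psi^A(\{y\})=\{x\}$ and $\psi^A(\{x\}\cup\{y\})=\emp$, so $\psi^A$ is not even monotone and $A^{\restr\Psi}$ is not a modal algebra; thus (c) fails. Condition (a) fails as well: were $\fragm{L}{\Psi}$ a normal logic, closure under the monotonicity rule would put $(\Di p_0\con\neg p_0)\imp(\Di(p_0\vee p_1)\con\neg(p_0\vee p_1))$ into $\LK{}$, which is refuted at $x$ in the same frame under $p_0\mapsto\{y\}$, $p_1\mapsto\{x\}$. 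The root cause is the one you name: the paper's normality \emph{axioms} are always accompanied by the monotonicity \emph{rule}, which is what supplies $\Di p_0\vee\Di p_1\imp\Di(p_0\vee p_1)$ for a primitive modality, and condition (b) copies the axioms while having no analogue of the rule for a compound $\psi$.

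Your repair --- reading (b) as $\neg\psi(\bot)$ together with the biconditional $\psi(p_0\vee p_1)\iff\psi(p_0)\vee\psi(p_1)$ --- is the right one, and under it the equivalence is true and both arguments go through: (a)$\Rightarrow$(b) remains immediate (your derivation of the converse implication from closure of the fragment under monotonicity is exactly what is needed), (b)$\Rightarrow$(c) becomes a legitimate application of \eqref{frag:ident}, and your (c)$\Rightarrow$(a) coincides with the paper's proof, namely $\fragm{L}{\Psi}=\Log(\clK^{\restr\Psi})$ for a class $\clK$ of $L$-algebras with $L=\Log(\clK)$. Apart from making the correction explicit, your route differs from the paper's only in proving (a)$\Rightarrow$(c) directly instead of factoring it through (b) --- a detour that is in fact forced, since the literal (b) is too weak to reach (c).
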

\begin{proof}
That (a) implies (b) is immediate: the formulas in (b) 
are translations of the formulas   $\neg\Di\bot$  and
$\Di  (p_0\vee p_1) \leftrightarrow  \Di  p_0\vee \Di  p_1$ with $\Di=\psi$; the latter two formulas are in  $L^{\restr \Psi}$, since $L^{\restr \Psi}$ is a logic.

That (b) implies (c) readily follows from \eqref{frag:ident}.

To prove that (c) implies (a),
note that $L=\Log(\clK)$ for a class $\clK$ of $L$-algebras. Hence:
\begin{equation}\label{eq:compl-for-fragm}
\vf \in L^{\restr \Psi}  \text{ iff } [\vf]\in L \text{ iff } \clK\mo[\vf]=1 \text{ iff } \clK^{\restr \Psi}\mo \vf=1 \text{ iff }
\vf\in \Log(\clK^{\restr \Psi}).
\end{equation}
The first equivalence is the definition of $L^{\restr \Psi}$, the second and the last are by to the definition of the logic of algebras, and the third follows from \eqref{frag:ident}. Thus,
$L^{\restr \Psi}=\Log(\clK^{\restr \Psi})$, so is a logic, and hence is a fragment.
\end{proof}

Also, \eqref{eq:compl-for-fragm} yields the following generalization of
\eqref{eq:fragm-alphabet}: \improve{wording and logic}
\begin{proposition}\label{prop:fragm-compound}~ 
If $L^{\restr \Psi}$ is a fragment of $L$ for a set $\Psi$ of one-variable formulas over $\Al$,
and $L=\Log(\clK)$ for a class $\clK$ of algebras, then $L^{\restr \Psi}=\Log(\clK^{\restr \Psi})$.
\end{proposition}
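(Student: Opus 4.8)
The plan is to reuse the chain of equivalences \eqref{eq:compl-for-fragm} essentially verbatim, after observing that its derivation used only the hypothesis $L=\Log(\clK)$ together with the defining property of a fragment, and nothing about $\clK$ beyond this. So the same four steps go through for an arbitrary generating class, and the proof reduces to checking each link under the present hypotheses rather than for the specific class of all $L$-algebras.

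Concretely, for a formula $\vf$ over $\Psi$ I would chain: $\vf\in L^{\restr\Psi}$ iff $[\vf]\in L$ (the definition of $L^{\restr\Psi}=\fragm{L}{\Psi}$); iff $\clK\mo[\vf]=1$ (the assumption $L=\Log(\clK)$, unwound through the definition of the logic of a class of algebras); iff $\clK^{\restr\Psi}\mo\vf=1$ (the instance $\xi=1$ of the translation identity \eqref{frag:ident}, applied to each $A\in\clK$, using $[1]=1$ so that the identity $\vf=1$ transports correctly under the translation); iff $\vf\in\Log(\clK^{\restr\Psi})$ (again the definition of the logic of a class). Composing these yields $L^{\restr\Psi}=\Log(\clK^{\restr\Psi})$, which in the special case $\Psi=\{\Di p\}_{\Di\in\AlB}$ recovers \eqref{eq:fragm-alphabet}.

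To be sure the right-hand side is meaningful, I would also note that every $A\in\clK$ is an $L$-algebra, since $L=\Log(\clK)$ forces each formula of $L$ to be valid on every member of $\clK$; as $L^{\restr\Psi}$ is assumed to be a fragment, the preceding proposition (condition (c)) then guarantees that each reduct $A^{\restr\Psi}$ is a modal algebra, so $\Log(\clK^{\restr\Psi})$ is indeed a normal logic and the equation is an identity of logics. I do not expect a genuine obstacle here: the content is already packaged in \eqref{eq:compl-for-fragm}, and the only step requiring real care is the third equivalence, where \eqref{frag:ident} must be applied uniformly across all members of $\clK$.
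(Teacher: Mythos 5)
Your proposal is correct and is essentially the paper's own argument: the paper obtains this proposition precisely by observing that the chain of equivalences \eqref{eq:compl-for-fragm} goes through verbatim for any class $\clK$ with $L=\Log(\clK)$ once $L^{\restr \Psi}$ is known to be a fragment, which is exactly your route. Your extra check that each reduct $A^{\restr \Psi}$ is a modal algebra (every $A\in\clK$ is an $L$-algebra, then apply condition (c) of the preceding proposition) is sound bookkeeping that the paper leaves implicit.
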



\subsection{Local tabularity and local finiteness}

\subsubsection*{\underline{Definitions}}

For $k<\omega$, we say that $L$ is {\em $k$-finite},
if there are only finitely many $L$-equivalence classes of $k$-formulas.
In other terms, $k$-finiteness of $L$ means that $\Lind{L}{k}$ is finite.
A logic $\vL$ is {\em locally tabular},
if it is $k$-finite for each $k<\omega$.

\improve{\IS{Reason about fragments: }
If $L$ is $k$-finite, then every fragment of $L$ is $k$-finite as well.
}

\improve{
\IS{If canonical models are envolved:}

Equivalently, $L$ is $k$-finite, if
the $k$-generated free algebra $\Lind{L}{k}$ of $L$ is finite, and equivalently,
the $k$-canonical model of $\vL$ is finite.

The following proposition is immediate.
\begin{proposition} Let $L$ be a logic. TFAE:
\begin{enumerate}[(a)]
\item $L$ is $k$-finite;
\item The $k$-generated free algebra $\Lind{L}{k}$ of $L$ is finite;
\item The $k$-canonical frame of $\vL$ is finite.
\end{enumerate}
\end{proposition}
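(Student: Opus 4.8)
The plan is to establish the cycle of equivalences by reducing everything to the definition of the Lindenbaum algebra together with one standard fact about ultrafilter spaces of Boolean algebras. The logic of the proof is that $k$-finiteness of the logic, finiteness of the free algebra, and finiteness of the canonical frame are three expressions of the same cardinality.

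For (a) $\Leftrightarrow$ (b), I would recall that $\Lind{L}{k}$ is by construction the quotient of the algebra of $k$-formulas by the congruence given by $\vf \eq{L} \psi$ iff $\vf \lra \psi \in L$. Hence the elements of $\Lind{L}{k}$ are exactly the equivalence classes of $k$-formulas modulo provable equivalence, so the cardinality of $\Lind{L}{k}$ coincides with the number of pairwise non-equivalent $k$-formulas of $L$. Thus (a)---that $L$ has only finitely many pairwise non-equivalent $k$-formulas---holds precisely when (b)---that $\Lind{L}{k}$ is finite. This is nothing more than the reformulation already recorded in the text immediately after the definition of $k$-finiteness, so no further work is needed here.

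For (b) $\Leftrightarrow$ (c), I would pass through Stone duality at the level of $\Lind{L}{k}$. By its standard construction, the $k$-canonical frame of $\vL$ is built on the set of maximal $L$-consistent sets of $k$-formulas, and these correspond bijectively to the ultrafilters of the underlying Boolean algebra of $\Lind{L}{k}$; that is, its domain is $\Ult{\Lind{L}{k}}$. It therefore suffices to show that a Boolean algebra $A$ is finite iff $\Ult{A}$ is finite. The forward direction is clear, since a finite Boolean algebra has at most $|A|$ ultrafilters (one per atom). For the converse I would invoke the Stone map, which by the representation theorem recalled above embeds $A$ into $\pwr(\Ult{A})$: if $\Ult{A}$ is finite then $\pwr(\Ult{A})$ is finite, and a subalgebra of a finite algebra is finite, so $A$ is finite. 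Applying this with $A$ the Boolean reduct of $\Lind{L}{k}$ yields (b) $\Leftrightarrow$ (c).

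The argument is entirely routine---indeed ``immediate,'' as the statement claims---once these two ingredients are in place, and I expect no genuine obstacle. The only point deserving a word of care is the identification of the domain of the $k$-canonical frame with $\Ult{\Lind{L}{k}}$, which rests on the correspondence between maximal consistent sets of $k$-formulas and ultrafilters of the Lindenbaum algebra (the Jónsson--Tarski representation specialized to $\Lind{L}{k}$); I would simply cite this correspondence rather than reprove it.
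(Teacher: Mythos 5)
Your proof is correct and matches the paper's intent exactly: the paper states this proposition as ``immediate,'' offering no proof, and the two ingredients you supply---the identification of $\Lind{L}{k}$ with the quotient of $k$-formulas modulo $L$-equivalence for (a)$\Leftrightarrow$(b), and the Stone embedding of $\Lind{L}{k}$ into $\pwr(\Ult{\Lind{L}{k}})$ for (b)$\Leftrightarrow$(c)---are precisely the facts the paper has already set up in its discussion of Lindenbaum algebras and the J\'{o}nsson--Tarski representation. No gap; this is the routine elaboration the paper intends.
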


}

An algebra $B$ is {\em locally finite}, if every finitely generated
subalgebra of $B$ is finite. A class $\clK$ of algebras is {\em locally finite}, if every algebra in $\clK$ is.

\subsubsection*{\underline{Basic observations}}
The following is immediate.
\begin{proposition}\label{prop:k-fin-for-fragm}
 If $L$ is $k$-finite, then each of its fragments is $k$-finite as well.
\end{proposition}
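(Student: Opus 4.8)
The plan is to show that the free $k$-generated $\fragm{L}{\Psi}$-algebra $\Lind{\fragm{L}{\Psi}}{k}$ is finite, deducing this from finiteness of $A:=\Lind{L}{k}$, which is exactly the hypothesis that $L$ is $k$-finite. Write $g_0,\dots,g_{k-1}$ for the free generators of $A$ (the classes of the variables $p_0,\dots,p_{k-1}$), so that $A$ is generated by them as an $L$-algebra.

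First I would pass from $A$ to its reduct $A^{\restr\Psi}=(B,(\psi^A)_{\psi\in\Psi})$, where $B$ is the underlying Boolean algebra of $A$. Since $\fragm{L}{\Psi}$ is a fragment of $L$, the characterization of fragments above (its condition (c)) guarantees that $A^{\restr\Psi}$ is a modal algebra; crucially it shares its underlying set with $A$, hence is finite. Its subalgebra $C$ generated by $g_0,\dots,g_{k-1}$ is therefore finite as well.

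The heart of the argument is to identify $\Lind{\fragm{L}{\Psi}}{k}$ with $C$. The universal property of the free algebra yields a surjective homomorphism $h\colon\Lind{\fragm{L}{\Psi}}{k}\to C$ sending the class of a $\Psi$-formula $\vf$ to the value of $\vf$ in $A^{\restr\Psi}$ under the assignment $p_i\mapsto g_i$. It remains to see that $h$ is injective, and this is the one step needing care. Suppose $\vf$ and $\xi$ are $k$-formulas over $\Psi$ taking the same value in $A^{\restr\Psi}$ under this assignment. By \eqref{frag:ident} the translations $[\vf]$ and $[\xi]$ take the same value in $A$ under $p_i\mapsto g_i$; since $A$ is free on $g_0,\dots,g_{k-1}$, this forces $[\vf]\iff[\xi]\in L$, and as $[\cdot]$ commutes with the Boolean connectives, $[\vf\iff\xi]\in L$, i.e. $\vf\iff\xi\in\fragm{L}{\Psi}$. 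Thus $\vf$ and $\xi$ denote the same class, so $h$ is injective and $\Lind{\fragm{L}{\Psi}}{k}\cong C$ is finite; that is, $\fragm{L}{\Psi}$ is $k$-finite.

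The main obstacle is precisely this faithfulness step: one must ensure that distinct $\fragm{L}{\Psi}$-equivalence classes of $\Psi$-formulas remain distinct after interpretation in $A^{\restr\Psi}$, and this is where the freeness of $A$ on exactly the $k$ generators and the Boolean-homomorphism property of the translation $[\cdot]$ are both used. Equivalently, and perhaps more transparently, the whole argument can be phrased syntactically: $\vf\mapsto[\vf]$ sends $k$-formulas over $\Psi$ to $k$-formulas over $\Al$ and, by the displayed chain of equivalences, induces an injection of $\fragm{L}{\Psi}$-equivalence classes into $L$-equivalence classes; since the latter are finite in number by $k$-finiteness of $L$, so are the former.
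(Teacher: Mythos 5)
Your proposal is correct, but its main route is genuinely different from the paper's. The paper disposes of this proposition in one stroke, deriving it from Proposition \ref{prop:fragm-compound} (equivalently, from the defining equivalence $\vf\in \fragm{L}{\Psi} \tiff [\vf]\in L$): since the translation $[\cdot]$ commutes with the Boolean connectives and sends $k$-formulas over $\Psi$ to $k$-formulas over $\Al$ (here one uses that $\Psi$ consists of one-variable formulas), it induces a well-defined injection of $\fragm{L}{\Psi}$-equivalence classes of $k$-formulas into $L$-equivalence classes of $k$-formulas, and $k$-finiteness of $L$ finishes the argument --- this is exactly the ``syntactic'' rephrasing in your closing paragraph. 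Your primary argument instead works algebraically: it identifies $\Lind{\fragm{L}{\Psi}}{k}$ with the subalgebra $C$ of the reduct $\Lind{L}{k}^{\restr\Psi}$ generated by the free generators, which is finite because $\Lind{L}{k}$ is. This is a valid and somewhat richer statement (it locates the free algebra of the fragment inside a reduct of $\Lind{L}{k}$, giving the explicit bound $|\Lind{\fragm{L}{\Psi}}{k}|\leq|\Lind{L}{k}|$), at the cost of two small steps you leave implicit: to invoke the universal property of $\Lind{\fragm{L}{\Psi}}{k}$ you must first check that $C$ validates $\fragm{L}{\Psi}$ (this follows from \eqref{frag:ident} because $\Lind{L}{k}$, being in the variety of $L$-algebras, validates all of $L$, not only its $k$-formulas), and your appeal to \eqref{frag:ident} is really to its pointwise, per-assignment version (value of $\vf$ in $A^{\restr\Psi}$ under an assignment equals value of $[\vf]$ in $A$ under the same assignment), proved by an easy induction on $\vf$. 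Neither caveat is a gap in substance; both arguments are sound.
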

\improve{a remark about non-normal fragments; explain that we are interested in normal only them}
We have the following equivalences.
\begin{proposition}\label{prop:LT-basic-equivs} For a logic $L$, TFAE:
\begin{enumerate}[(a)]
\item $L$ is locally tabular.
\item For each $k<\omega$, $\Lind{L}{k}$ is finite.
\item The algebra $\Lind{L}{\omega}$ is locally finite.
\item The variety of $L$-algebras is locally finite.
\item All fragments of $L$ are locally tabular.
\item All extensions of $L$ are locally tabular.
\item $L$ is a fragment of some locally tabular logic.
\end{enumerate}
\improve{All $k$-canonical models of $L$ are finite.}
\end{proposition}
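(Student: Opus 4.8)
The plan is to read (a) $\Iff$ (b) straight off the definitions, to locate the only genuine algebraic content in (b) $\Iff$ (c), and then to dispose of (d)--(f) using the fact that $L$ is simultaneously an extension and a fragment of itself.

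Clause (a) $\Iff$ (b) is immediate: $L$ is locally tabular precisely when it is $k$-finite for every $k<\omega$, and $k$-finiteness of $L$ was defined as finiteness of $\Lind{L}{k}$. For (b) $\Iff$ (c) I would use the identification, recalled above, of $\Lind{L}{k}$ with the free $k$-generated algebra of the variety $\clV$ of $L$-algebras. The direction (c) $\imp$ (b) is then clear, since each $\Lind{L}{k}$ is a finitely generated member of $\clV$ and hence finite. For (b) $\imp$ (c) I would take an arbitrary finitely generated $A\in\clV$, say generated by $k$ elements, and apply the universal property of free algebras: $A$ is a homomorphic image of $\Lind{L}{k}$, which is finite by (b), so $A$ is finite and $\clV$ is locally finite. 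This passage---that every finitely generated member of the variety is a quotient of some finite-rank free algebra---is where the real work sits, and it is exactly the standard universal-algebra criterion for local finiteness.

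The remaining clauses reduce to two observations: taking $\AlB=\AlA$ so that $L^{\restr\AlA}=L$, the logic $L$ is a fragment of itself, and trivially $L$ is an extension of itself. Granting these, (d) $\imp$ (a) and (e) $\imp$ (a) are immediate, while (f) $\imp$ (a) follows from Proposition \ref{prop:k-fin-for-fragm}: a fragment of a $k$-finite logic is $k$-finite, so a fragment of a locally tabular logic is locally tabular. In the forward direction, (a) $\imp$ (d) is exactly Proposition \ref{prop:k-fin-for-fragm} applied to $L$; (a) $\imp$ (f) combines (a) with the self-fragment remark; and for (a) $\imp$ (e) I would note that the variety of an extension $L_1\supseteq L$ is a subvariety of $\clV$ and so inherits local finiteness from (c) (equivalently, any two $k$-formulas equated in $L$ stay equated in $L_1$, so $\Lind{L_1}{k}$ is no larger than $\Lind{L}{k}$). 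Assembling these implications closes the cycle and gives the stated equivalence.
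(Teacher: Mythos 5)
Your proposal is correct and takes essentially the same route as the paper: the paper's entire proof is the one-line citation ``By Propositions \ref{prop:lind} and \ref{prop:k-fin-for-fragm},'' and what you have written is precisely the unpacking of those two ingredients --- the Lindenbaum-algebra/free-algebra identification with the universal property giving (b) $\Leftrightarrow$ (c), and preservation of $k$-finiteness under fragments, together with the observations that $L$ is a fragment and an extension of itself, giving (d)--(f). No gaps; the details you supply (quotients of free algebras for local finiteness of the variety, subvarieties/coarser equivalence for extensions) are exactly the standard facts the paper leaves implicit.
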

\begin{proof}
 By Propositions \ref{prop:lind}   and \ref{prop:k-fin-for-fragm}.
\end{proof}
\improve{explain}

It is a standard observation that a formula is valid in an algebra $A$ iff it is valid in every finitely generated subalgebra of $A$.
So we have the following important consequences of local finiteness and local tabularity:
\improve{
 (one direction holds, since taking subalgebras preserve validity of identities; the other direction holds, since there are only finitely many variables in every formula).}
\begin{proposition}~
\begin{enumerate}
\item The logic of a class of locally finite algebras has the finite model property and in particular Kripke complete.
    \item If $L$ is locally tabular, then all extensions of $L$ have the finite model property.
\end{enumerate}

\end{proposition}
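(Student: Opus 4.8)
```latex
The plan is to prove both parts from the single observation quoted just before the statement: a formula $\vf$ is valid in an algebra $A$ if and only if it is valid in every finitely-generated subalgebra of $A$. I will use this together with the already-established fact (Proposition \ref{prop:LT-basic-equivs}) that local tabularity of $L$ is equivalent to local finiteness of the variety of $L$-algebras, and with the representation of a logic as $\Log(\clK)$ for a suitable class $\clK$.

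For part (1), let $L=\Log(\clK)$ where $\clK$ is a class of locally finite algebras. First I would observe, via the quoted remark, that for each $A\in\clK$ we have $\Log(A)=\Log(\clS)$, where $\clS$ is the class of all finitely-generated subalgebras of $A$; since $A$ is locally finite, every member of $\clS$ is finite. Taking the union over all $A\in\clK$, I obtain a class $\clS'$ of \emph{finite} algebras with $\Log(\clK)=\Log(\clS')$, so $L=\Log(\clS')$. Because a finite modal algebra is isomorphic to the algebra of a finite Kripke frame, the passage noted in the subsection on the finite model property then gives that $L$ is the logic of a class of finite Kripke frames, i.e. $L$ has the finite model property; and completeness with respect to finite (hence Kripke) frames yields Kripke completeness.

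For part (2), suppose $L$ is locally tabular and let $L_1\supseteq L$ be any extension of $L$ over the same alphabet. By Proposition \ref{prop:LT-basic-equivs}, part (e), every extension of a locally tabular logic is again locally tabular, so $L_1$ is locally tabular. By Proposition \ref{prop:LT-basic-equivs}, parts (a) and (c), $L_1$ is the equational theory of a locally finite variety, so in particular $L_1=\Log(\clK_1)$ for the class $\clK_1$ of all $L_1$-algebras, each of which is locally finite. Applying part (1) to $L_1$ then shows that $L_1$ has the finite model property.

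The only delicate point is bookkeeping rather than a genuine obstacle: I must make sure the class $\clS'$ of finite subalgebras I assemble in part (1) really satisfies $\Log(\clK)=\Log(\clS')$ uniformly across all $A\in\clK$, which is exactly what the quoted ``validity iff validity in finitely-generated subalgebras'' observation delivers, since a formula fails in $\clK$ precisely when it fails in some $A\in\clK$, hence in some finitely-generated subalgebra of that $A$. Beyond this, everything reduces to invoking the earlier equivalences and the finite-algebra/finite-frame correspondence, so the argument is short.
```
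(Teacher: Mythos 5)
Your proof is correct and follows exactly the route the paper intends: the paper derives this proposition directly from the observation that validity in an algebra is equivalent to validity in all its finitely-generated subalgebras, combined with the finite-algebra/finite-frame correspondence and Proposition \ref{prop:LT-basic-equivs}, which is precisely your argument. Your write-up simply makes explicit the bookkeeping (assembling the class of finite subalgebras for part (1), and reducing part (2) to part (1) via local finiteness of the variety of $L_1$-algebras) that the paper leaves to the reader.
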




\section{Finite height criterion for the unimodal transitive case and general necessary conditions for local tabularity}\label{sec:nec}

\subsection{Criterion}

\improve{
\IS{Old working} In this subsection we discuss the criterion of local tabularity for unimodal logics of transitive frames given
in \cite{Seg_Essay,Maks1975}, and its corollaries for all modal logics \cite{CZ}, \cite{LocalTab16AiML}.
}

Define the {\em height of a poset $\h(X,\leq)$} as $\sup\{|\Sigma|\mid  \text{$\Sigma$ is a finite chain in $(X,\leq)$}\}$. (Of course,  $\h(X,\leq)$ is different from the well-known notion of rank: $\h(X,\leq)$ is defined for all posets, and is not greater than $\omega$.)

\improve{\IS{Old}}
For a binary relation $R$ on a set $X$, let
$R^*$ denote its transitive reflexive closure $\bigcup_{i <\omega} R^i$, where
$R^0$ is the diagonal $Id_X=\{(a,a)\mid a\in X\}$ on $X$, $R^{i+1}=R\circ R^i$, and $\circ$ is the composition.

\improve{\IS{Almost all is old}} For a Kripke frame $\frF=(X,(R_\Di)_{\Di\in \Al})$, put $R_\frF=\bigcup_{\Di\in \Al} R_\Di$. A {\em cluster} in $\frF$ is an equivalence class with respect to the relation $\sim_\frF\; =\{(a,b)\mid a R^*_\frF b \text{ and } b R^*_\frF a\}$. 
For clusters $C, D$, put $C\leq_\frF D$ iff $a R^*_\frF b$ for some $a\in C, b\in D$.
The poset $(X{/}{\sim_\frF},\leq_\frF)$ is called the {\em skeleton of} $\frF$.
The {\em height of a frame} $\frF$, in symbols $\h{\frF}$,
is the height of its skeleton.
For a class $\clF$ of frames, its {\em height $\h{\clF}$} is defined as $\sup\{\h{F}\mid F\in \clF\}$.

\smallskip
In the transitive case, the following formulas  bound the height of a frame $(X,R)$:
\begin{equation}
B_0\;:=\;\bot,  \quad B_{h} \;:=\; p_{h} \to  \Box(\Di p_{h} \lor B_{h-1}).
\end{equation}
Namely, for each $h<\omega$ we have \cite{Seg_Essay}:
\begin{equation}\label{eq:fin-ht-trans}
(X,R)\mo B_h \text{ iff } \h(X,R)\leq h.
\end{equation}

Recall that $\Di\Di p\imp \Di p$ expresses the transitivity of the binary relation in a unimodal Kripke frame.
By a {\em transitive} logic we mean a unimodal logic containing this formula.
 The smallest transitive logic is denoted by $\LK{4}$.

In \cite{Seg_Essay}, it was shown
that every transitive logic containing a formula $B_h$ is locally tabular.
It follows from \cite{Maks1975} that there are no other locally tabular transitive logics.
Hence, we have:
\begin{theorem}\label{thm:thm-seg-maks}
A transitive logic is locally tabular iff it contains $B_h$ for some $h< \omega$.
\end{theorem}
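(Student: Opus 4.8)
### Proof Proposal

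\textbf{Overall strategy.} The plan is to prove both directions through the algebraic/frame-theoretic characterization of local tabularity established earlier. For a transitive logic, the relation $R$ is transitive, so $R^* = R \cup Id_X$ and the skeleton structure is particularly simple: clusters are the strongly connected components, and the partial order on clusters is directly induced by $R$. The key leverage is that in the transitive case, finite height bounds the complexity of frames in a way that forces finitely many formulas in finitely many variables. I would establish the easy direction (necessity) first, then devote the bulk of the effort to the harder sufficiency direction.

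\textbf{Necessity ($\Rightarrow$).} Suppose $L$ is locally tabular but contains no $B_h$ for any $h<\omega$. By \eqref{eq:fin-ht-trans}, for each $h$ there is an $L$-frame $(X,R)$ with $\h(X,R) > h$, i.e. there exist frames of arbitrarily large height validating $L$. The idea is that a chain of length $h$ in the skeleton can be used to falsify distinct formulas, producing unboundedly many pairwise nonequivalent formulas in a fixed (small) number of variables. Concretely, I would build a valuation distinguishing the levels of a long chain and exhibit that this contradicts $k$-finiteness of $\Lind{L}{k}$ for some fixed $k$. So local tabularity forces finite height, hence some $B_h \in L$.

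\textbf{Sufficiency ($\Leftarrow$).} Suppose $B_h \in L$ for some $h<\omega$. By Proposition \ref{prop:LT-basic-equivs}, it suffices to show the variety of $L$-algebras is locally finite; equivalently, that every $k$-generated finitely-generated subalgebra of an $L$-algebra is finite, for each $k<\omega$. I would argue semantically: every $L$-frame $(X,R)$ is transitive of height at most $h$, and I would bound the number of definable subsets realizable by a $k$-valuation. The crucial device is to show that under a $k$-valuation, two points satisfying the same $k$-formulas (i.e. having the same ``type'') can be identified, and that the number of such types is bounded by a function of $k$ and $h$ alone. This is done by induction on height $h$: the base case $h=0$ gives the empty (or trivial) frame, and the inductive step peels off the maximal clusters (the top of the skeleton), using transitivity to control how a point ``sees'' the rest of the frame via $\Box$ and $\Di$. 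The bound on the size of the final clusters, combined with a bound on the number of types at each height level, yields a global finite bound $N(k,h)$ on $|\Lind{L}{k}|$.

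\textbf{Main obstacle.} The hard part will be the inductive type-counting in the sufficiency direction: one must show that the type of a point is determined by a finite amount of information, namely its own Boolean atom among the $2^k$ possibilities together with the (finite, by induction) set of types realized in its strict $R$-successors outside its cluster, plus the bounded internal structure of its cluster. Making this precise requires care with non-singleton clusters, since within a cluster every point sees every other, so the cluster must be treated as a unit whose size is itself bounded via a separate argument (a transitive cluster validating $B_h$ behaves like an $\LS5$-frame, whose $k$-generated algebra is finite). Controlling the interaction between the intra-cluster reflexive/symmetric behavior and the inter-cluster strict order, so that the induction on height closes with an explicit bound, is the technical crux.
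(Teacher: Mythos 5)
First, a point of comparison: the paper does not actually prove Theorem \ref{thm:thm-seg-maks}; it cites it as the classical Segerberg--Maksimova result, deferring the proof to \cite{Seg_Essay,Maks1975} and \cite[Section 12.4]{CZ}. So your proposal has to be judged against the standard argument and the paper's surrounding machinery. Your necessity direction is essentially the standard one and is sound in outline: local tabularity gives the finite model property, hence Kripke completeness, so failure of every $B_h$ yields transitive $L$-frames of unbounded height via \eqref{eq:fin-ht-trans}, and an alternating valuation along long chains of clusters produces an infinite family of pairwise non-$L$-equivalent formulas in a fixed number of variables, contradicting $k$-finiteness. (This is also the content of the paper's more general Theorem \ref{thm:1-finite-to-m-h}.)

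The genuine gap is in your sufficiency direction. You correctly reduce to local finiteness of the variety of $L$-algebras, but then argue only about Kripke frames validating $L$ (``every $L$-frame $(X,R)$ is transitive of height at most $h$\ldots''). This silently assumes that the variety of $L$-algebras is generated by the algebras of Kripke $L$-frames, i.e.\ that $L$ is Kripke complete --- which is precisely what you do not yet know: completeness is a \emph{consequence} of local tabularity here, not a hypothesis. What your type-counting on Kripke $L$-frames actually yields is uniform local finiteness of those powerset algebras, hence local tabularity of $\Log(\Frames(L))\supseteq L$; but local tabularity passes to extensions (Proposition \ref{prop:LT-basic-equivs}), not down to sublogics, so this does not reach $L$ itself. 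A general $L$-algebra embeds, via J\'onsson--Tarski, into the algebra of a Kripke frame that need not validate $L$, so nothing bounds the height of that frame in your argument. The missing step --- the standard repair --- is Segerberg's lemma, recorded in the paper as Proposition \ref{prop:BhCanon}: if a transitive logic contains $B_h$, then its canonical frame $F_L$ (the ultrafilter frame of the free algebra, which is transitive by canonicity of $\LK{4}$) has height at most $h$. One then runs your induction-on-height type count on this \emph{general} frame, whose $k$-generated dual is exactly $\Lind{L}{k}$; in the paper's own terminology this is the construction of tuned partitions, and it is exactly how the paper's generalization, Theorem \ref{thm:FinH-crit-canon}, is organized (canonicity plus the cluster criterion, Theorem \ref{thm:clust-crit}). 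Your treatment of clusters as $\LS{5}$-like units is the right ingredient for that last step, but without the canonical-frame lemma the proof does not close.
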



One of the goals of this paper is to discuss possible generalizations of this remarkable fact, as well as its limitations.

\improve{Words}

\subsection{Necessary condition: pretransitivity}

There are non-transitive locally tabular logics. In particular, the logic of every finite frame or algebra is locally tabular due to the following trivial observation:  in a given structure, formulas in $k$-variables correspond to operations of arity $k$, and there are only finitely many of them if the structure is finite (of course, this fact is well-known and pertains to the equational theory of any finite algebraic structure of finite signature).
More examples of non-transitive locally tabular logics will be discussed below.
\improve{Another well-known example of non-transitive locally tabular logic is the unimodal logic
given by the a}

Nevertheless, every locally tabular logic possesses some weaker version of
transitivity.
For a binary relation $R$ and $m<\omega$, put
$R^{\leq m}=  \bigcup_{i \leq m} R^i$. A relation
$R$ is said to be {\em $m$-transitive},  if $R^{\leq m}=R^*$.
It is straightforward that
\begin{equation}
\text{$R$ is $m$-transitive iff
$R^{m+1}\subseteq R^{\leq m}$.  }
\end{equation}
A frame $F$ is said to be  $m$-{\em transitive}, if the relation $R_\frF$ is;  $F$ is {\em pretransitive}, if it is $m$-transitive for some $m<\omega$. A class $\clF$ of frames is {\em pretransitive}, if for some fixed $m<\omega$,
every $F\in\clF$ is $m$-transitive. \improve{Was removed: $R$ is called {\em pretransitive}, if it is $m$-transitive for some finite $m$.}

\improve{example}

As well as transitivity, $m$-transitivity can be expressed by a modal formula.
Let
$\Di^{\leq m} \vf$ denote $\bigvee_{i\leq m} \Di^i \vf$, where $\Di^0 \vf$ is defined as $\vf$, and $\Di^{i+1}\vf$ as $\Di\Di^{i}\vf$.
Let also $\DiAl\vf$ abbreviate $\bigvee_{\Di\in\Al}\Di \vf$.
Consider the formula $\DiAl^{m+1} p \imp \DiAl^{\leq m} p$, which is denoted by $\atr_\Al(m)$.
For any Kripke frame $\frF$, we have \cite[Section 3.4]{KrachtBook}:
\improve{check: and $\Box^{\leq m} \vf$ denote $\neg \Di^{\leq m} \neg \vf$.}
\begin{equation}\label{pretr:sem}
\text{
$R_\frF$ is $m$-transitive iff
$\frF\mo \atr_\Al(m)$. }
\end{equation}
A logic $L$ is said to be {\em $m$-transitive}, if it contains the formula $\atr_\Al(m)$.
The {\em transitivity index $\tra{L}$ of $L$} is
the least $m$ with $\atr_\Al(m)\in L$, if it exists; otherwise, we put $\tra{L}=\omega$.
A logic
$\vL$ is {\em pretransitive}, if $\tra{L}<\omega$, that is $L$ is $m$-transitive for some finite $m$.
We have:
\begin{proposition}\label{prop:pretr}
If a logic is 1-finite, then $\tra{L}<\omega$.
\end{proposition}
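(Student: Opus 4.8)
The plan is to exhibit, for a $1$-finite logic $L$, a single index $m$ witnessing $m$-transitivity, i.e.\ to find $m<\omega$ with $\atr_\Al(m)\in L$, whence $\tra{L}\le m<\omega$. The key observation is that the candidate witnessing formulas $\atr_\Al(m)=\DiAl^{m+1}p\imp\DiAl^{\leq m}p$ are built from the single variable $p$, so the entire argument can be carried out inside the finite Lindenbaum algebra $\Lind{L}{1}$ (recall that $1$-finiteness of $L$ means exactly that $\Lind{L}{1}$ is finite).

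First I would consider the sequence of $1$-formulas $\DiAl^{\leq 0}p,\DiAl^{\leq 1}p,\DiAl^{\leq 2}p,\dots$. By definition $\DiAl^{\leq m+1}p=\DiAl^{\leq m}p\vee\DiAl^{m+1}p$, so each formula is a disjunct of the next; hence $\DiAl^{\leq m}p\imp\DiAl^{\leq m+1}p$ is already a propositional tautology and in particular lies in $L$. Passing to $\Lind{L}{1}$ via Proposition~\ref{prop:lind}, the values of the formulas $\DiAl^{\leq m}p$ thus form an ascending chain in this algebra.

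Now I would invoke $1$-finiteness. Since $\Lind{L}{1}$ is finite, the ascending chain just described cannot increase strictly at every step: an infinite strictly ascending chain is impossible in a finite algebra, and equivalently, if no step collapsed then the formulas $\DiAl^{\leq m}p$ would be pairwise non-equivalent, contradicting the existence of only finitely many pairwise non-equivalent $1$-formulas. Hence there is some $m$ with $\DiAl^{\leq m+1}p\equiv_L\DiAl^{\leq m}p$, that is, $\DiAl^{\leq m+1}p\imp\DiAl^{\leq m}p\in L$.

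Finally I would unwind this collapse propositionally. Using $\DiAl^{\leq m+1}p=\DiAl^{\leq m}p\vee\DiAl^{m+1}p$ once more, the implication $\DiAl^{\leq m+1}p\imp\DiAl^{\leq m}p$ is propositionally equivalent to $\DiAl^{m+1}p\imp\DiAl^{\leq m}p$, which is precisely $\atr_\Al(m)$. Therefore $\atr_\Al(m)\in L$, so $L$ is $m$-transitive and $\tra{L}\le m<\omega$. There is no serious obstacle in this argument; the only point that genuinely uses the modal structure is the monotonicity of the chain $\DiAl^{\leq m}p$, which reduces the whole claim to a pigeonhole on the finitely many one-variable equivalence classes guaranteed by $1$-finiteness.
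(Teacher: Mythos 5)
Your proof is correct, but it takes a genuinely different route from the paper's. The paper argues semantically: it passes to the unimodal fragment $L_0$ of $L$ induced by the compound modality $\Di_\Al p$, notes that $L_0$ inherits $1$-finiteness so that $\Lind{L_0}{1}$ is finite, represents this finite algebra as the algebra of a finite Kripke frame, and then uses the fact that any finite frame is $m$-transitive for some $m$; since $\atr_\Al(m)$ is a $1$-formula, Proposition \ref{prop:lind} pulls it back into the logic. Your argument is instead purely syntactic: a pigeonhole on the ascending chain $[\DiAl^{\leq 0}p]\leq[\DiAl^{\leq 1}p]\leq\cdots$ in $\Lind{L}{1}$, followed by the propositional unwinding of $(\DiAl^{\leq m}p\vee\DiAl^{m+1}p)\imp\DiAl^{\leq m}p$ into $\atr_\Al(m)$. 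Two points make your version work where one might worry: the chain is monotone because each implication $\DiAl^{\leq m}p\imp\DiAl^{\leq m+1}p$ is a propositional tautology, so a single repetition forces a collapse at some consecutive step; and a single collapse suffices because $m$-transitivity is by definition just the one inclusion $R^{m+1}\subseteq R^{\leq m}$, which is exactly what $\atr_\Al(m)$ expresses. What each approach buys: yours is more elementary and self-contained, needing neither the J\'onsson--Tarski representation nor the fragment machinery, while the paper's argument showcases the fragment/finite-frame technique that it reuses elsewhere and makes the semantic content (finite frames are automatically pretransitive) explicit.
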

This fact is most probably a folklore, see \cite[Exercise 7.15]{CZ}. It can be explained as follows.   The formula $\Di_\Al p $ induces a unimodal fragment $L_0$ of $L$; clearly, $L_0$ inherits 1-finiteness from $L$, so $\Lind{L_0}{1}$ is finite. Due to finiteness, $\Lind{L_0}{1}$  validates the $m$-transitivity formula for some finite $m$  (this is especially easy to see, if represent $\Lind{L_0}{1}$ as the algebra of a finite Kripke frame).
It remains to observe that $m$-transitivity is a 1-formula, and use Proposition \ref{prop:lind}.
\improve{and some more}


\subsection{Necessary condition: finite height}

\smallskip

Assume that a frame $\frF$ is $m$-transitive. In this case,  the
compound modality
$\Di_\AlA^{\leq m}$  relates to $R^*_\frF$. Since
the height of $\frF$  is the height of the preorder $(X,R^*_\frF)$, from
\eqref{eq:fin-ht-trans} we obtain:
\begin{equation}\label{eq:heigth-m}
\text{$\frF\mo B^{\leq m}_h$  iff  the height of $\frF$ is not greater than $h$.}
\end{equation}

\Conv Assume that a logic $L$ is pretransitive, and $\tra{L}=m$. Then we write $\Di^*$ for $\Di_\Al^{\leq m}$.
Also, in this case we use the following notation: for a unimodal formula $\vf$, let $\vf^*$
be the formula obtained from $\vf$ by replacing
each occurrence of $\Di$ with $\Di^*$.
\improve{It is inaccurate.}

\begin{remark}
  The logic with no modalities (that is, the classical proposition logic) is 0-transitive:
  in this case, $\Di_\AlA p$ is $\bot$, so $\atr_\emp(0)$ takes the form of the tautology $p \vee \bot \imp p$.
  \improve{Also, $\Di^*\vf$ is just $\vf$ in this case, and so the height is 1. }
\end{remark}

For a pretransitive logic, its {\em height $\h{L}$} is the least $h$ with
$B_h^* \in L$, if it exists; otherwise,  we put $\h{L}=\omega$.
If $L$ is not pretransitive, its height is also defined as $\omega$.\improve{Zaplata...}

The following is immediate from definitions.
\begin{proposition}
Let $L$ be a logic over $\Al$, and $L_0$ its unimodal $\Di_\Al$-fragment. Then $\tra{L}=\tra{L_0}$ and $\h{L}=\h{L_0}$.
\end{proposition}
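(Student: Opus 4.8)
The plan is to unwind the definition of the fragment $L_0 = L^{\restr\Psi}$ with $\Psi = \{\DiAl p\}$, and to track how the associated translation $[\cdot]$ acts on the axioms that define $\tra{\cdot}$ and $\h{\cdot}$. Recall that this translation interprets the single modality $\Di$ of the unimodal language by the compound modality $\DiAl$; concretely, $[\Di\vf] = \DiAl[\vf]$, and $[\cdot]$ commutes with Boolean connectives. A straightforward induction then gives $[\Di^i\vf] = \DiAl^i[\vf]$, and hence $[\bigvee_{i\leq m}\Di^i\vf] = \bigvee_{i\leq m}\DiAl^i[\vf] = \DiAl^{\leq m}[\vf]$, for all $i,m<\omega$. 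By the defining property of the fragment, for every unimodal formula $\vf$ we have $\vf \in L_0$ iff $[\vf]\in L$.

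For the transitivity index, observe that the unimodal $m$-transitivity axiom $\Di^{m+1}p \imp \Di^{\leq m}p$ translates, via the identities above, to $\DiAl^{m+1}p \imp \DiAl^{\leq m}p$, which is exactly the polymodal axiom $\atr_\Al(m)$. Hence the unimodal $m$-transitivity axiom lies in $L_0$ iff $\atr_\Al(m)\in L$; taking the least such $m$ (or $\omega$ if none exists) yields $\tra{L_0} = \tra{L}$.

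For the height, first note that $L$ is pretransitive iff $L_0$ is, since their transitivity indices coincide; if neither is pretransitive, then $\h{L} = \h{L_0} = \omega$ by definition. Otherwise put $m = \tra{L} = \tra{L_0}$, so that $\Di^*$ abbreviates $\Di^{\leq m}$ in $L_0$ and $\DiAl^{\leq m}$ in $L$. Since $[\cdot]$ commutes with Boolean connectives and sends $\Di^{\leq m}\psi$ to $\DiAl^{\leq m}[\psi]$, it commutes with the starring operation: for the unimodal formula $B_h$, the translation of $B_h^*$ computed in $L_0$ is syntactically $B_h$ with each $\Di$ replaced by $\DiAl^{\leq m}$, that is, precisely $B_h^*$ computed in $L$. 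Therefore $B_h^* \in L_0$ iff $B_h^* \in L$, and taking the least such $h$ gives $\h{L_0} = \h{L}$.

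The only point requiring any care — though it is entirely routine — is the compatibility of $[\cdot]$ with the two constructions, namely the bounded iteration $\Di^{\leq m}$ and the starring operation; once this is recorded, both equalities reduce to a direct appeal to the defining equivalence $\vf \in L_0 \iff [\vf]\in L$ of the fragment.
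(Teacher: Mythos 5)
Your proof is correct and matches the paper exactly: the paper gives no explicit argument, stating only that the proposition is ``immediate from definitions,'' and your unwinding of the translation $[\cdot]$ (checking $[\Di^{\leq m}\vf]=\Di_{\Al}^{\leq m}[\vf]$, hence $[\atr(m)]=\atr_\Al(m)$ and $[B_h^*]=B_h^*$, with the non-pretransitive case handled separately) is precisely the verification the paper leaves to the reader.
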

\later{... fragment given by the formula...}

The following fact was obtained in \cite{LocalTab16AiML}.
If a unimodal logic is 1-finite, then its height is finite, that is, $L$ contains the formula $B_h^*$ for some $h$. Combining it with Proposition \ref{prop:pretr}, we obtain:
\begin{theorem}\label{thm:1-finite-to-m-h}\cite{LocalTab16AiML}
If a logic $L$ is 1-finite, then
$\h{L}<\omega$.
\end{theorem}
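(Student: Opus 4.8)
The plan is to reduce the polymodal statement to the unimodal case recorded (after \cite{LocalTab16AiML}) immediately above, exploiting the fact that pretransitivity, the transitivity index, the height, and $1$-finiteness are all controlled by the single compound modality $\DiAl$. First I would observe that the conclusion $\h{L}<\omega$ is only meaningful once $L$ is pretransitive, since for a non-pretransitive logic $\h{L}$ is set to $\omega$ by definition, and $B_h^*$ presupposes a well-defined $\Di^*$. Proposition \ref{prop:pretr} supplies exactly this ingredient: $1$-finiteness of $L$ forces $\tra{L}<\omega$, so, writing $m=\tra{L}$, the abbreviation $\Di^*=\DiAl^{\leq m}$ is well defined and $\h{L}$ is literally the least $h$ with $B_h^*\in L$.

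Next I would pass to the unimodal $\DiAl$-fragment $L_0$ of $L$. By Proposition \ref{prop:k-fin-for-fragm}, $L_0$ inherits $1$-finiteness from $L$; and by the proposition above stating $\tra{L}=\tra{L_0}$ and $\h{L}=\h{L_0}$, the two relevant invariants are the same for $L$ and for $L_0$. Since $L_0$ is a unimodal $1$-finite logic, the unimodal result recorded above yields $\h{L_0}<\omega$, and therefore $\h{L}=\h{L_0}<\omega$, as required. This is the entire combination: Proposition \ref{prop:pretr} makes $\h{L}$ a meaningful finite-or-$\omega$ quantity, Proposition \ref{prop:k-fin-for-fragm} transfers the hypothesis to the fragment, and the height-preservation proposition transfers the conclusion back.

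The genuinely hard step is the unimodal core, which I would treat as the black box imported from \cite{LocalTab16AiML}. Its difficulty is a mismatch in the number of variables: the height axioms $B_h^*$ involve $h$ distinct variables, whereas the hypothesis only bounds the size of the one-variable algebra $\Lind{L}{1}$. What must be established there is a purely semantic bound, namely that the height of any $L_0$-frame, equivalently the length of the longest chain in the skeleton of its $R^*$-preorder, is bounded by a function of $|\Lind{L}{1}|$. I would argue contrapositively: if such chains of clusters could be made arbitrarily long, then, using a single variable together with iterated $\Di^*$ and $\Box^*$ to separate successive levels of the chain, one could exhibit more than $|\Lind{L}{1}|$ pairwise inequivalent one-variable formulas, contradicting $1$-finiteness. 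Designing the one-variable formulas that detect the depth of a cluster along a chain is the crux of the matter, and it is precisely this encoding that is carried out in \cite{LocalTab16AiML}.
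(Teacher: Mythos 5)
Your proposal is correct and takes essentially the same route as the paper: the paper obtains the theorem by combining the cited unimodal fact from \cite{LocalTab16AiML} with Proposition \ref{prop:pretr} and the reduction to the unimodal $\DiAl$-fragment (using that fragments inherit $1$-finiteness, Proposition \ref{prop:k-fin-for-fragm}, and that $\tra{L}=\tra{L_0}$ and $\h{L}=\h{L_0}$). Like you, the paper treats the unimodal core as a black box imported from \cite{LocalTab16AiML}, so no gap arises from your doing the same.
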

\improve{$\tra{L}<\omega$ and $\h{L}<\omega$.}
\improve{fragments}

Hence, every locally tabular logic $L$ contains a formula of pretransitivity and a finite height formula.
The converse does not hold. There exists a unimodal $L$ with
$\tra{L}=2$ and $\h{L}=1$, which is not locally tabular \cite{Byrd78}, and even not 1-finite \cite{Makinson81}.
We discuss a simple example of such logic below in Example \ref{ex:1-finite-not-two-finite}.
\later{DC: different example? }

Thus in general, finite height is not sufficient for local tabularity. This motivates the following definition.
\begin{definition}\label{def:admFinH-unimodal}
A unimodal logic {\em admits the finite height criterion}, if for each of its
extensions
$L$, we have:
$L$ is locally tabular iff the height of $L$ is finite.
\end{definition}
In this terms, Theorem \ref{thm:thm-seg-maks} says that the smallest transitive logic $\LK{4}$  admits the finite height criterion.
The following generalization is due to \cite{LocalTab16AiML}.
\begin{theorem}\label{thm:LTforLm}
Let $L_m$ be the smallest unimodal logic containing the formula $\Di^{m+1} p\imp \Di p\vee p$. Then for each $m>0$,
   $L_m$ admits the finite height criterion.
\end{theorem}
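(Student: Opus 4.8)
The plan is to verify the two directions of Definition~\ref{def:admFinH-unimodal} for an arbitrary extension $L\supseteq L_m$. First I would record the semantics of the axiom: by a routine Sahlqvist computation, $(X,R)\mo \Di^{m+1}p\imp \Di p\vee p$ iff $R^{m+1}\subseteq Id\cup R$. For $m\geq 1$ this gives $R^{m+1}\subseteq Id\cup R=R^{\leq 1}\subseteq R^{\leq m}$, so by the criterion for $m$-transitivity every $L_m$-frame is $m$-transitive; hence $\atr_\Al(m)\in L_m$ by \eqref{pretr:sem}, $\tra{L_m}\le m$, and every extension $L$ is pretransitive with $\Di^{*}=\Di^{\leq m}$ available uniformly. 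The ``only if'' direction is then immediate and uses nothing special about $L_m$: if $L$ is locally tabular it is in particular $1$-finite, so $\h{L}<\omega$ by Theorem~\ref{thm:1-finite-to-m-h}.

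The substance is the converse: assuming $\h{L}=h<\omega$, show $L$ is locally tabular. By Proposition~\ref{prop:LT-basic-equivs} it suffices to prove that every finitely generated $L$-algebra is finite; by the J\'onsson--Tarski representation such an algebra embeds into the algebra of an $L_m$-general-frame of height $\le h$, so I would argue frame-theoretically. The finite-height hypothesis controls the skeleton through the transitive companion relation $R^{*}=R^{\leq m}$: the $\Di^{*}$-fragment $\fragm{L}{\Psi}$ with $\Psi=\{\Di^{*}p\}$ is, by Proposition~\ref{prop:fragm-compound}, the logic of the reducts $(X,R^{*})$, which are preorders. Since $R^{*}$ is transitive and reflexive and these frames have height $\le h$, the fragment contains $B_h$ read in $\Di^{*}$ (that is $B_h^{*}$) by \eqref{eq:heigth-m}, so it is a transitive logic of finite height and therefore locally tabular by the Segerberg--Maksimova theorem (Theorem~\ref{thm:thm-seg-maks}). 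Thus the skeleton is pinned down up to finitely many $k$-types, and the remaining work concerns the fine one-step relation $R$, which the axiom ties tightly to $R^{*}$, most delicately inside clusters, where $R^{*}$ is the universal relation.

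Consequently the whole problem reduces to a single claim, which I expect to be the main obstacle: the class of clusters of $L_m$-frames, that is, strongly connected frames $(C,S)$ with $S^{m+1}\subseteq Id\cup S$, has a locally tabular logic, uniformly in the cluster. The axiom is essential here and is exactly what fails for $m=0$: inside a cluster any two points are joined by arbitrarily long walks, so the inclusion $S^{m+1}\subseteq Id\cup S$ strongly constrains the geometry, and the structural analysis I would carry out aims to show that each cluster is either finite of bounded type or \emph{essentially universal}, so that in either case its algebra is locally finite (the universal case being $\mathrm{S}5$-like). Making this dichotomy precise and, more importantly, gluing the local finiteness of clusters along a skeleton of height $\le h$ into local finiteness of the whole algebra is the delicate step; the clean way to carry it out is the tuned-partition criterion for local finiteness of \cite{LocalTab16AiML}, which reduces local tabularity of a pretransitive finite-height logic to a finitary condition on its clusters. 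I would therefore (i) prove the cluster claim by the structural analysis just sketched, and (ii) invoke the tuned-partition machinery to lift it, with the finite height $h$ bounding the number of layers, to local finiteness of every finitely generated $L$-algebra.
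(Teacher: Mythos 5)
Your outline has the same overall shape as the paper's argument: the ``only if'' direction via Theorem~\ref{thm:1-finite-to-m-h}, and the ``if'' direction by reducing a finite-height extension to the cluster criterion (Theorem~\ref{thm:clust-crit}) together with local tabularity of the cluster class --- this is exactly how the paper proceeds, via Theorem~\ref{thm:FinH-crit-canon} ($L_m$ is canonical, pretransitive, and has the ripe cluster property). But everything that is specific to $L_m$ lives in the single claim you defer: that the logic of the class $\clust{\clF}$ of clusters of $L_m$-frames is locally tabular. You do not prove it, and the structural route you propose for it --- a dichotomy by which every cluster is ``either finite of bounded type or essentially universal'' --- is \emph{false} for $m\geq 2$. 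Take $m=2$ and the bidirectional star on $X=\{a,b_1,\dots,b_n\}$ with $R=\{(a,b_i)\mid i\le n\}\cup\{(b_i,a)\mid i\le n\}$: one checks $R^3=R\subseteq Id_X\cup R$, so this is an $L_2$-frame; it is a single cluster (every point is $R^*$-related to every other), it is arbitrarily large, and it is nowhere near universal, since no spoke sees any other spoke. (Its clusters are still uniformly tunable --- refine any partition by isolating the hub --- but that is established by a different argument, and carrying it out uniformly for \emph{all} clusters of $L_m$-frames is precisely the content of the proof in \cite{LocalTab16AiML}.) So the ``delicate step'' you flag is not merely unfinished; the plan for it would fail.

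A second, smaller gap concerns completeness. You pass from finitely generated $L$-algebras to general frames via J\'onsson--Tarski and assert they are ``$L_m$-general-frames of height $\le h$,'' but validity of $B_h^*$ in a general frame does not by itself bound the height of the underlying Kripke frame, and the cluster machinery you want to invoke (Theorem~\ref{thm:clust-crit}, Corollary~\ref{cor:tuned-for-frames}) is stated for Kripke frames. The paper closes exactly this hole with canonicity: $L_m$ is Sahlqvist, the finite-height extension $L'=L_m+B_h^*$ is again canonical by Proposition~\ref{prop:BhCanon}, hence Kripke complete with respect to $L_m$-frames of height $\le h$, at which point the cluster criterion applies to $L'$ and every extension $L\supseteq L'$ inherits local tabularity. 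Your proof needs this canonicity step (or an explicit Sahlqvist argument for descriptive frames) spelled out; without it the reduction to Kripke frames of bounded height is unjustified.
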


Now we aim to state the polymodal version of Definition \ref{def:admFinH-unimodal}. In its present form, its condition is far too weak to be interesting in the polymodal case:
\begin{example}
  Let $L$ be the bimodal logic of all frames of the form $(X,R,X\times X)$ (this $L$ is an example  of a  logic with the universal modality, discussed later). Clearly, $L$ is not locally tabular, since one of its unimodal fragments is not. At the same time, all its frames are 1-transitive and of height 1.
\end{example}
Let us therefore make a stronger requirement.
Assuming that a polymodal $L$ over $\Al$ is 1-finite, we also obtain 1-finiteness of all $L^{\restr \AlB}$ with $\AlB\subseteq \Al$ (Proposition \ref{prop:k-fin-for-fragm}). So by Theorem \ref{thm:1-finite-to-m-h}, we have $\tra{L^{\restr \AlB}}<\omega$ and
$\h{L^{\restr \AlB}}<\omega$ for
every such $L^{\restr \AlB}$.

\begin{corollary}\label{cor:finHt-nec}
In $L$ is a 1-finite logic over $\Al$, then $\tra{L^{\restr \AlB}}<\omega$ and
$\h{L^{\restr \AlB}}<\omega$ for
each $\AlB\subseteq \Al$.
\end{corollary}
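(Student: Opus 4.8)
The plan is to reduce this polymodal statement to the unimodal facts already established, treating each reduct $L^{\restr \AlB}$ as a fragment of $L$ and then feeding it into Theorem~\ref{thm:1-finite-to-m-h}.

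First I would record that for every $\AlB \subseteq \Al$ the reduct $L^{\restr \AlB}$ is literally the fragment $L^{\restr \Psi}$ for $\Psi = \{\Di p\}_{\Di \in \AlB}$, exactly as observed in the Example following the definition of fragments. In particular $L^{\restr \AlB}$ is a genuine normal logic over the alphabet $\AlB$, which is precisely the content of \eqref{eq:fragm-alphabet}. This bookkeeping is what licenses applying theorems phrased for logics to the object $L^{\restr \AlB}$.

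Next, since $L$ is $1$-finite by hypothesis, Proposition~\ref{prop:k-fin-for-fragm} — every fragment of a $k$-finite logic is again $k$-finite — gives at once that each $L^{\restr \AlB}$ is $1$-finite. I would then apply Theorem~\ref{thm:1-finite-to-m-h} to the logic $L^{\restr \AlB}$ itself, obtaining $\h{L^{\restr \AlB}} < \omega$. For the transitivity clause, note that a logic of finite height is by definition pretransitive (a non-pretransitive logic is assigned height $\omega$), so the finite-height conclusion already delivers $\tra{L^{\restr \AlB}} < \omega$; equivalently, one may invoke Proposition~\ref{prop:pretr} directly against the $1$-finiteness of $L^{\restr \AlB}$.

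I do not expect a genuine obstacle: the corollary is essentially an assembly of the fragment-stability of $1$-finiteness with the unimodal finite-height theorem. The one point demanding care — more a subtlety than a difficulty — is verifying that $L^{\restr \AlB}$ qualifies as a normal logic, so that Theorem~\ref{thm:1-finite-to-m-h}, stated for logics, is applicable; this is secured by the characterization of fragments together with \eqref{eq:fragm-alphabet}, and requires no new argument.
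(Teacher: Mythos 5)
Your proposal is correct and follows essentially the same route as the paper: the paper's own argument (given in the paragraph preceding the corollary) is exactly the combination of Proposition~\ref{prop:k-fin-for-fragm} (fragments inherit $1$-finiteness, with $L^{\restr \AlB}$ viewed as the fragment for $\Psi=\{\Di p\}_{\Di\in\AlB}$) with Theorem~\ref{thm:1-finite-to-m-h}, which itself already packages Proposition~\ref{prop:pretr} to yield both $\tra{L^{\restr \AlB}}<\omega$ and $\h{L^{\restr \AlB}}<\omega$. Your extra observation that finite height alone forces pretransitivity (since non-pretransitive logics are assigned height $\omega$) is a valid shortcut but not needed, as the paper's conventions and Theorem~\ref{thm:1-finite-to-m-h} deliver both conclusions directly.
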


\begin{definition}\label{def:admFinH-polymodal}
Let $L_0$ be a logic over $\Al$.
Then $L_0$ {\em admits the finite height criterion}, if for each of its extensions $L$, we have:
\begin{center}
$L$ is locally tabular iff
the height of $L^{\restr \AlB}$ is finite for each $\AlB\subseteq \Al$.
\end{center}
\end{definition}
\improve{pretr of $L_0^{\restr \AlB}$  implies pretr of $L^{\restr \AlB}$  }

\hide{
\begin{definition}
Let $L_0$ be a logic over $\Al$ such that $L_0^{\restr \AlB}$ is pretransitive for each $\AlB\subseteq \Al$.
Then $L_0$ {\em admits the finite height criterion}, if for each of its extensions $L$, we have:
\begin{center}
$L$ is locally tabular iff
$\tra{L^{\restr \AlB}}<\omega$ and
$\h{L^{\restr \AlB}}<\omega$ for
each $\AlB\subseteq \Al$.
\end{center}
\end{definition}
}

This definition is more technical than its unimodal analog. But it definitely deserves a consideration: if a polymodal logic
admits the finite height criterion, then its locally tabular extensions have an explicit description, both syntactic and semantic.
We discuss polymodal examples of such logics in Sections \ref{sec:tuned} and \ref{sec:FHC}.
\improve{find good wording somewhere}

\subsection{$k$-finiteness}

Another interesting fact about transitive logics was obtained in \cite{Maksimova89}:
\begin{theorem}\label{thm:Maksimova-1-fin}
If a transitive logic is 1-finite, then it is locally tabular.
\end{theorem}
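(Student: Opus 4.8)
The plan is to read the statement as the composition of two facts already available: Theorem~\ref{thm:1-finite-to-m-h}, which upgrades $1$-finiteness to finite height, and the Segerberg--Maksimova criterion, Theorem~\ref{thm:thm-seg-maks}, which upgrades finite height to local tabularity for transitive logics. So let $L$ be transitive and $1$-finite. First I would note that transitivity yields $\atr_\Al(1)\in L$ (from $\Di\Di p\imp\Di p\in L$, since $\Di p\imp p\vee\Di p$ is a tautology), so $\tra{L}\le 1$; hence $L$ is pretransitive, $\h{L}$ is defined, and in the principal case $\tra{L}=1$ the operator $\Di^{*}\vf$ is just $\vf\vee\Di\vf$. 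Applying Theorem~\ref{thm:1-finite-to-m-h}, $1$-finiteness gives $\h{L}<\omega$, that is $B_h^{*}\in L$ for some $h<\omega$.

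It remains to pass from $B_h^{*}\in L$ to local tabularity. Here I would use that, by Proposition~\ref{prop:LT-basic-equivs}, every extension of a locally tabular logic is locally tabular; since $L\supseteq\LK{4}+B_h^{*}$, it therefore suffices to show that the single concrete logic $\LK{4}+B_h^{*}$ is locally tabular, which I would do by identifying it with $\LK{4}+B_h$ (locally tabular by Theorem~\ref{thm:thm-seg-maks}).

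The main obstacle is exactly this identification, because $B_h$ and $B_h^{*}$ are honestly different formulas and one does \emph{not} have $B_h^{*}\imp B_h\in\LK{4}$: already for $h=1$, a two-point irreflexive chain with $p_1$ true at both points makes $B_1^{*}$ true but $B_1$ false at the root. What saves the argument is that the two axioms cut out the same frame class. By \eqref{eq:fin-ht-trans} a transitive frame validates $B_h$ iff its height is at most $h$, and by \eqref{eq:heigth-m} with $m=1$ (so that $\Di^{\le 1}$ tracks $R^{*}$) the same frame validates $B_h^{*}$ iff its height is at most $h$; passing to the reflexive closure of $R$ alters neither the skeleton nor the height. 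Thus $\LK{4}+B_h$ and $\LK{4}+B_h^{*}$ have the same Kripke frames, namely the transitive frames of height at most $h$. Since $\LK{4}+B_h$ is Kripke complete, the one genuinely technical point that remains is the completeness (equivalently, the finite model property) of $\LK{4}+B_h^{*}$; granting it, $\LK{4}+B_h^{*}=\Log(\{\text{transitive frames of height}\le h\})=\LK{4}+B_h$, so $B_h\in L$ and Theorem~\ref{thm:thm-seg-maks} yields local tabularity of $L$. I would settle this last point by checking that $B_h^{*}$ is canonical over $\LK{4}$, so that the canonical frame of $\LK{4}+B_h^{*}$ is transitive of height $\le h$ and hence validates $B_h$.
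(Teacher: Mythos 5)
Your proposal is correct in substance, but note that it cannot coincide with ``the paper's proof'' because the paper gives none: Theorem~\ref{thm:Maksimova-1-fin} is imported from \cite{Maksimova89} as a black box, and is then itself consumed as an ingredient in the paper's proof of Theorem~\ref{thm:fht-imp-1-fin}. What you supply instead is a genuine derivation of Maksimova's theorem from two \emph{other} imported results: the necessity theorem (Theorem~\ref{thm:1-finite-to-m-h}, from \cite{LocalTab16AiML}) and the sufficiency half of Theorem~\ref{thm:thm-seg-maks} (Segerberg's direction), glued by a completeness argument. This is not circular within the paper, since neither of those results is derived here from the present statement (and the proof of Theorem~\ref{thm:1-finite-to-m-h} in \cite{LocalTab16AiML} is a direct construction not relying on Maksimova's theorem); what it buys is the observation that, once the 2016 necessity result is granted, Maksimova's 1989 theorem is no longer an independent input. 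You also isolate the genuine crux correctly: $B_h^*\imp B_h\notin\LK{4}$ (your two-point irreflexive chain is a correct witness), so passing from $B_h^*\in L$ to $B_h\in L$ must go through Kripke completeness of $\LK{4}+B_h^*$ rather than a syntactic implication. The canonicity fact you defer is exactly what the paper records as Proposition~\ref{prop:BhCanon}, so within the paper's own deductive economy that step is available off the shelf.

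One small repair is needed. You work ``in the principal case $\tra{L}=1$'', but the degenerate case $\tra{L}=0$, i.e.\ $\Di p\imp p\in L$, does occur for transitive $1$-finite logics (e.g.\ the logic of a single irreflexive point), and there your reduction breaks: the paper's $B_h^*$ is defined relative to $\tra{L}$, and when $\Di^*\vf$ is just $\vf$ the formula $B_1^*$ is a tautology, so the conclusion $\h{L}<\omega$ of Theorem~\ref{thm:1-finite-to-m-h} carries no information and $\LK{4}+B_1^*$ is just $\LK{4}$, which is not locally tabular. The patch is one line: if $\Di p\imp p\in L$, then also $p\imp\Box p\in L$, hence $\Di^{\leq 1}q$ and $\Box^{\leq 1}q$ are $L$-equivalent to $q$, so the non-degenerate formula $B_1^{\leq 1}$ is $L$-equivalent to a tautology and therefore lies in $L$; your main argument then resumes verbatim with $m=1$, $h=1$.
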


As well as the finite height criterion, this theorem does not transfer to the general case: a 1-finite logic can be non-locally tabular \cite{Glivenko2021}.

\begin{definition}
A logic {\em admits the $k$-finiteness condition}, if for each of its extensions $L$, we have:
$L$ is locally tabular whenever it is $k$-finite.
\end{definition}
So $\LK{4}$ admits 1-finiteness condition. A simple example of a logic which does not  admit 1-finiteness condition is provided in Example \ref{ex:1-finite-not-two-finite} below.

\smallskip

The following is unknown.
\begin{problem}\label{prob:k-fin}
For a given $\Al$, does the smallest logic $\LK{}_{\Al}$ over $\Al$ admit $k$-finiteness condition for some finite $k$?
At least, is it true for the unimodal case? At least, for the smallest $m$-transitive logic with a given finite $m$?
\end{problem}\improve{logic; least logics?}
\improve{intuitionistic history}

\begin{theorem}\label{thm:fht-imp-1-fin}
If a logic admits the finite height criterion, then  it admits the 1-finiteness condition.
\end{theorem}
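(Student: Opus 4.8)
The plan is to observe that the theorem is an immediate consequence of Corollary \ref{cor:finHt-nec} together with the very definition of admitting the finite height criterion. Suppose $L_0$ is a logic over $\Al$ that admits the finite height criterion in the sense of Definition \ref{def:admFinH-polymodal}. To show that $L_0$ admits the $1$-finiteness condition, I must verify that every extension $L$ of $L_0$ which is $1$-finite is in fact locally tabular. So I would fix such an extension $L$ and assume it is $1$-finite.

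Now $1$-finiteness of $L$ feeds directly into Corollary \ref{cor:finHt-nec}, which yields $\h{L^{\restr \AlB}}<\omega$ for every $\AlB\subseteq\Al$ (and, incidentally, $\tra{L^{\restr \AlB}}<\omega$ as well). But this is precisely the right-hand side of the equivalence in Definition \ref{def:admFinH-polymodal}: since $L_0$ admits the finite height criterion and the heights of all alphabet-reducts of $L$ are finite, $L$ must be locally tabular. As $L$ was an arbitrary $1$-finite extension of $L_0$, the logic $L_0$ admits the $1$-finiteness condition, as required.

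There is essentially no obstacle here once Corollary \ref{cor:finHt-nec} is in hand: the content of the theorem is entirely carried by the earlier fact that $1$-finiteness forces finite height in every alphabet-fragment, after which the defining equivalence of the finite height criterion does the rest. In the unimodal case the argument specializes to invoking Theorem \ref{thm:1-finite-to-m-h} directly in place of the corollary, again concluding $\h{L}<\omega$ and hence local tabularity via Definition \ref{def:admFinH-unimodal}.
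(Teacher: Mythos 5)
Your proof is correct, and it shares the paper's overall skeleton: fix a $1$-finite extension $L$ of $L_0$, show that $\h{L^{\restr \AlB}}<\omega$ for every $\AlB\subseteq\Al$, and conclude by the defining equivalence of the finite height criterion. The difference is in how the middle step is obtained. You cite Corollary \ref{cor:finHt-nec} (i.e., Theorem \ref{thm:1-finite-to-m-h} applied reduct by reduct), which rests on the result of \cite{LocalTab16AiML} that $1$-finiteness forces finite height. The paper instead re-derives the finite height of each reduct through the classical transitive machinery: $L^{\restr \AlB}$ is $1$-finite (Proposition \ref{prop:k-fin-for-fragm}), hence pretransitive; its transitive $\Di^*$-fragment is again a $1$-finite logic, hence locally tabular by Maksimova's Theorem \ref{thm:Maksimova-1-fin}, hence contains $B_h$ for some $h$ by the Segerberg--Maksimova criterion (Theorem \ref{thm:thm-seg-maks}); translating back gives $B_h^*\in L^{\restr \AlB}$. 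What each approach buys: yours is the most economical given what is already stated in the paper, since Corollary \ref{cor:finHt-nec} packages exactly the needed fact; the paper's detour shows that, beyond pretransitivity (Proposition \ref{prop:pretr}), only the two classical transitive results are needed, thereby making explicit the role of Maksimova's $1$-finiteness theorem --- which is the theme of that subsection --- rather than invoking the full strength of the necessity-of-finite-height result from \cite{LocalTab16AiML}. Both derivations are sound; they simply rest on different prior results.
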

\begin{proof}
Assume that $L_0$ admits the finite height criterion, and let $L$ be its 1-finite extension.
We claim that $L$ is locally tabular.

\def\astfrB{{_\ast L^{\restr\AlB}}}
Let $\AlB\subseteq \AlA$. Then the fragment $L^{\restr \AlB}$ is 1-finite.
Hence, this logic is pretransitive by Theorem \ref{thm:1-finite-to-m-h}. Its $\Di^*$-fragment  $\astfrB$ is a transitive 1-finite logic.
By Theorem \ref{thm:Maksimova-1-fin}, this logic is locally tabular. By Theorem \ref{thm:thm-seg-maks}, the finite height criterion for transitive logics,  $\astfrB$ contains a formula $B_h$ for some $h$. So
$L^{\restr \AlB}$ contains $B_h^*$, that is  the height of $L^{\restr \AlB}$ is finite.

Since $L_0$ admits the finite height criterion, $L$ is locally tabular.
\end{proof}
\improve{More details? ref to Shehtman?}

\improve{The converse of this theorem does not hold. \IS{$\LS{5}^2$ and other products. }}

\improve{(nevertheless, recently a wide family of polymodal logics where  1-finiteness imply local finiteness \cite{LTProducts}.) - another text}

\subsection{Necessary condition: reducible path property}

In fact, every locally tabular logic enjoys a property, which is stronger than pretransitivity.
Namely, for $m<\omega$, consider the first-order sentence:
\begin{equation}\label{eq:rppFO}
\AA x_0,\ldots, x_{m+1} \; (x_0Rx_1R\ldots R x_{m+1}\imp \bigvee\limits_{i<j\leq m+1} x_i = x_j
\vee \bigvee\limits_{i< j\leq m}  x_i R x_{j+1}).
\end{equation}
We call it {\em reducible path property}.
A class of unimodal frames is said to be {\em path reducible}, if for some $m$ it validates the above sentence.
It was shown in \cite[Theorem 7.3]{LocalTab16AiML} that the class of Kripke frames of a locally tabular unimodal logic
is path reducible.
Since local tabularity implies Kripke completeness, each unimodal locally tabular logic for some $m$ contains the formula
\begin{equation*}\label{eq:rppMod}
          R_m(\Di): = p_0\con \Di\left(p_1\con \Di\left(p_2\con \ldots \con \Di  p_{m+1}\right)\ldots \right)\imp
          \bigvee\limits_{i<j\leq m+1} \Di^i (p_i \con p_j)
          \vee \bigvee\limits_{i< j\leq m}  \Di^i(p_i \con \Di p_{j+1}),
\end{equation*}
which expresses the property \eqref{eq:rppFO}.

This transfers for the polymodal case immediately. Let $L$ be a locally tabular logic over $\Al$. Then its $\Di_\Al$-fragment is a locally tabular logic, and so contains $R_m(\Di_\Al)$ for some $m$. Moreover,  for each $\AlB\subseteq \AlA$,
the fragment $L^{\restr \AlB}$  of $L$ is locally tabular, and so contains $R_m(\Di_\AlB)$ for some $m$.
So we have the following definition and theorem.  \later{define notation $R_m(\Di_\Al)$.}\improve{This is
a proper way to speak about finite height and pretransitivity as well}
\begin{definition}
A logic $L$ over $\AlA$ is said to be {\em path reducible}, if for every $\AlB\subseteq \AlA$ there exists $m<\omega$ such that $L$ contains $R_m(\Di_\AlB)$.
\end{definition}

\begin{theorem}\cite{LocalTab16AiML}\label{thm:lt-rpp}
 Every locally tabular logic is path reducible.
\end{theorem}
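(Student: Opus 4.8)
The plan is to deduce the polymodal statement from its unimodal counterpart, already available from \cite[Theorem 7.3]{LocalTab16AiML}, by passing to one-variable compound modalities. Fix a locally tabular logic $L$ over $\Al$ and an arbitrary subalphabet $\AlB\subseteq\Al$, and consider the single compound modality $\Di_\AlB=\bigvee_{\Di\in\AlB}\Di$. Since every $\Di\in\AlB$ is normal in $L$, the formulas $\neg\Di_\AlB\bot$ and $\Di_\AlB(p_0\vee p_1)\imp\Di_\AlB p_0\vee\Di_\AlB p_1$ belong to $L$, so by condition (b) of the fragment characterization the set $\Psi=\{\Di_\AlB p\}$ gives a genuine unimodal fragment $\fragm{L}{\Psi}$ of $L$.

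Next I would use that fragments inherit local tabularity. By Proposition \ref{prop:LT-basic-equivs}(d), $\fragm{L}{\Psi}$ is a locally tabular unimodal logic; hence it has the finite model property and is Kripke complete, so it is the logic of its class of Kripke frames. By \cite[Theorem 7.3]{LocalTab16AiML} that class is path reducible, i.e.\ it validates \eqref{eq:rppFO} for some $m<\omega$, and since $R_m(\Di)$ expresses \eqref{eq:rppFO} we get $R_m(\Di)\in\fragm{L}{\Psi}$. By the definition of the fragment this means $[R_m(\Di)]\in L$; and because the translation $[\cdot]$ replaces each occurrence of $\Di$ (hence each $\Di^i$ by $\Di_\AlB^i$) by $\Di_\AlB$, an easy induction gives $[R_m(\Di)]=R_m(\Di_\AlB)$, so $R_m(\Di_\AlB)\in L$. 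As $\AlB\subseteq\Al$ was arbitrary, $L$ is path reducible.

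The substantial ingredient here is the cited unimodal semantic result \cite[Theorem 7.3]{LocalTab16AiML}: this is the step that actually converts local finiteness of the free algebras into the combinatorial reducibility of paths in the accessibility relation, and I would take it as a black box. Granting it, the only thing left to check is bookkeeping: that the operator of $\Di_\AlB$ on any $L$-frame is induced by $R_\AlB=\bigcup_{\Di\in\AlB}R_\Di$ (so that the path reducibility supplied by the unimodal theorem is precisely reducibility of $(X,R_\AlB)$, which is what $R_m(\Di_\AlB)$ expresses), and that the fragment translation carries $R_m(\Di)$ exactly onto $R_m(\Di_\AlB)$.
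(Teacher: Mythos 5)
Your proposal is correct and follows essentially the same route as the paper: the paper likewise reduces to the unimodal case by noting that the $\Di_\AlB$-fragment (via the fragment $L^{\restr\AlB}$) of a locally tabular logic is a locally tabular unimodal logic, invokes \cite[Theorem 7.3]{LocalTab16AiML} together with Kripke completeness of locally tabular logics to get $R_m(\Di)$ in that fragment, and translates this back to $R_m(\Di_\AlB)\in L$ for every $\AlB\subseteq\Al$. Your only variation is cosmetic — you pass directly to the compound-modality fragment $\fragm{L}{\{\Di_\AlB p\}}$ instead of first restricting to $L^{\restr\AlB}$, which yields the same unimodal logic.
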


\begin{remark}
 The proof given in \cite{LocalTab16AiML} only needs 2-finiteness of a logic to obtain the reducible path property on the class of its Kripke frames (see the proof of \cite[Proposition 7.4]{LocalTab16AiML}).
 Hence, we have the following corollary:
 \begin{equation}\label{eq:two-fin-forRpp}
 \text{If $L$ is two-finite and Kripke complete, then it is path reducible.}
 \end{equation}
 This fact  implies Theorem \ref{thm:lt-rpp} immediately due to Kripke completeness of locally tabular logics. However, we do not know if
  Kripke completeness  can be omitted in \eqref{eq:two-fin-forRpp}.
\end{remark}

It is straightforward that if a class of unimodal frames is path reducible for $m$, then its frames are $m$-transitive.
Any logic given by a reducible path formula is Kripke complete (reducible path formulas are Sahlqvist), so contains a formula of pretransitivity.

While path reducibility and finite height are both necessary for local tabularity,
they are still not sufficient even for 1-finiteness \cite{ShapSl2024}; see Example \ref{ex:1-finite-not-two-finite} below.

\begin{remark}
In analogy with the previous necessary conditions, we can introduce the following notion.
A logic {\em admits the reducible path criterion}, if for each of its extensions $L$, we have:
$L$ is locally tabular whenever it is path reducible and of finite height. We do not consider this notion in detail in the present paper, and only mention that polymodal logics that admit this criterion
  naturally appear in products of modal logics and close systems, see recent manuscripts \cite{ShapSl2024,Meadors_MS4_Arxiv,ShapSl2025PreLT}.\improve{Give explicit order?}
\improve{The first? And Meadors?}
\end{remark}

\subsection{Counterexamples}
As we discussed, in general finite height is not sufficient for local tabularity \cite{Byrd78}.
Theorem \ref{thm:Maksimova-1-fin} also does not transfer to the general case, since a 1-finite logic can be non-locally tabular. That adding the reducible path property is still not sufficient is also known. The following illustrates these facts with extensions of $\LK{TB}$, the logic of symmetric reflexive frames.

\begin{example}\label{ex:1-finite-not-two-finite}
   Let be the logic $L$ of the frame  $F=(\mathbb{Z},R)$, where
$$aRb \tiff |a-b|\neq 1,$$
and let $L_0$ be the logic of the restriction of $F$ to natural numbers.
These logics have the following `positive' properties:
\begin{enumerate}
  \item $\tra{L}=\tra{L_0}=2$, $\h{L}=\h{L_0}=1$;
  \item $L_0$ and $L$ are path reducible;
  \item $L$ is $1$-finite.
\end{enumerate}
The first two properties follow from immediate semantic arguments. The third is established in \cite[Section 3.4]{LTViaSums}, where it is also shown that neither $L$ nor $L_0$ is locally tabular, and moreover that $L$ is not $2$-finite and $L_0$ is not $1$-finite.

Hence: $L_0$ is an example of a path reducible 2-transitive logic of height 1, which is not 1-finite, and so not locally tabular;
$L$ is an example of a 1-finite, but not 2-finite logic.
\end{example}

\section{Relational characterization of local finiteness and local tabularity}\label{sec:tuned}


While local finiteness is an abstract algebraic notion, in the modal case it enjoys an intuitive visualization:
  since every modal algebra is  embeddable in the algebra of a Kripke frame,  locally finite modal algebras can
be described in terms of special partitions of relational structures. 
This section is based on the method proposed in \cite{LocalTab16AiML}.


\subsection{Local finiteness of powerset modal algebras}
We start with a simple fact about finitely generated Boolean algebras.
For a set  $X$ and  a family $\clP$ of its subsets, define the {\em equivalence $\eq{\clP}$ induced by $\clP$ on $X$}:
$$a\,\eq{\clP}\, b \text{ iff }  \AA P\in\clP \, (a\in P \Leftrightarrow b\in P).$$
Assume that $\clP$ is finite  and consider
the subalgebra $A$ of the powerset Boolean algebra $\pwr(X)$ generated by $\clP$.
One can easily observe that since $\clP$ is finite, every element of $A$ is a union of
$\eq{\clP}$-classes, and the quotient $\clU=X{/}\eq{\clP}$ is the set of atoms of $A$.

Now assume that $\pwr(X)$ is endowed with a unary operation $g$, which distributes over finite unions:   $g:\pwr(X)\to \pwr(X)$, $g(\emp)=\emp$, and $g(U\cup V)=g(U)\cup g(V)$ for all $U,V\subseteq X$.
It is easy to check that $A$ is closed under $g$ iff $g(U)$ belongs to $A$ for each atom $U$ of $A$, that is:
\begin{equation}\label{eq:pretune1}
\text{For every $U\in\clU$,    $g(U)$ is the union of some elements of $\clU$.}
\end{equation}
Equivalently, \eqref{eq:pretune1} can be stated in this form:
\begin{equation}\label{eq:pretune2}
\text{For every $U,V\in\clU$, if $V$ intersects $g(U)$, then $V$ is included in $g(U)$.}
\end{equation}

In fact, \eqref{eq:pretune2} gives a characterization of local finiteness of $(\pwr(X),g)$.
Let $A$ be a subalgebra of $(\pwr(X),g)$ generated by a finite family $\clQ$. If $A$ is finite, then its set of atoms $\clU$ is a finite partition of $X$ which refines $X{/}\eq{\clQ}$ and, as we observed, satisfies \eqref{eq:pretune2}. Conversely, assume that there exists a finite refinement $\clU$ of $X{/}\eq{\clQ}$ that
satisfies \eqref{eq:pretune2}; in this case, the Boolean algebra $B$ generated by $\clU$ in $\pwr(X)$ forms a subalgebra of $(\pwr(X),g)$; moreover, $B$ includes $\clQ$, so $B$ includes $A$, and hence $A$ is finite.

This motivates the following definition and statement.
\begin{definition}\label{def:GenTuned}
A partition $\clU$ of $X$ is said to be {\em $g$-tuned}, if it satisfies \eqref{eq:pretune2}.
$\clU$  is \em{tuned in a modal algebra $C=(\pwr(X), (g_\Di)_{\Di\in\Al})$}, if it is
$g_\Di$-tuned
 for each $\Di\in \Al$.
The algebra $C$ is said to be {\em tunable},
if for every finite partition of $X$ there exists its finite refinement, which is tuned in $C$.
\end{definition}
\begin{proposition}\label{prop:tuned1}~ Let
$C=(\pwr(X), (g_\Di)_{\Di\in\Al})$ be a modal algebra.
\begin{enumerate}
\item
A subalgebra of $(\pwr(X),(g_\Di)_{\Di\in\Al})$ generated by a finite set $\clQ\subseteq \pwr(X)$ is finite iff there exists
a finite partition $\clU$ of $X$ that refines $X{/}{\eq{\clQ}}$ and tuned in $C$.
\item $C$ is locally finite iff  $C$ is tunable.
\end{enumerate}
\end{proposition}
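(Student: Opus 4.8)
The plan is to build on the single-operation analysis carried out in the paragraphs preceding Definition \ref{def:GenTuned}, applying it separately to each modality $\Di\in\Al$. The central bridge is the equivalence recorded in \eqref{eq:pretune1} and \eqref{eq:pretune2}: a Boolean subalgebra of $\pwr(X)$ whose set of atoms is a finite partition $\clU$ is closed under a union-distributing operation $g$ precisely when $g$ sends each block of $\clU$ to a union of blocks, i.e.\ when $\clU$ is $g$-tuned. Since the operations $g_\Di$ each distribute over finite unions, the Boolean subalgebra generated by $\clU$ is closed under \emph{all} the $g_\Di$ simultaneously --- and hence is a modal subalgebra of $C$ --- exactly when $\clU$ is $g_\Di$-tuned for every $\Di\in\Al$, that is, tuned in $C$.

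For part (1), I would first treat the forward direction. Let $A$ be the modal subalgebra of $C$ generated by $\clQ$, and suppose $A$ is finite. Its Boolean reduct is then a finite Boolean algebra, so its set of atoms forms a finite partition $\clU$ of $X$ with every element of $A$ a union of $\clU$-blocks. As $\clQ\sub A$, each $Q\in\clQ$ is a union of blocks, so $\clU$ refines $X{/}\eq{\clQ}$; and as $A$ is closed under each $g_\Di$, the bridge above shows that $\clU$ is $g_\Di$-tuned for every $\Di$, hence tuned in $C$. For the converse, given a finite partition $\clU$ refining $X{/}\eq{\clQ}$ and tuned in $C$, let $B$ be the (finite) Boolean subalgebra generated by $\clU$. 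By the bridge, $B$ is closed under every $g_\Di$, so $B$ is a finite modal subalgebra of $C$; since $\clU$ refines $X{/}\eq{\clQ}$, each member of $\clQ$ is a union of blocks, whence $\clQ\sub B$, so $A\sub B$ and $A$ is finite.

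Part (2) then follows by instantiating part (1) on suitable partitions. For the forward implication, assume $C$ is locally finite and let $\clP$ be any finite partition of $X$. Because $\clP$ is a partition, $X{/}\eq{\clP}=\clP$; taking $\clQ=\clP$, the generated subalgebra is finite by local finiteness, so part (1) yields a finite partition $\clU$ refining $\clP$ and tuned in $C$, which is exactly what tunability demands. Conversely, assume $C$ is tunable and let $A$ be generated by a finite $\clQ$. Applying tunability to the finite partition $X{/}\eq{\clQ}$ produces a finite refinement $\clU$ tuned in $C$, and part (1) then gives that $A$ is finite; hence every finitely generated subalgebra is finite and $C$ is locally finite.

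I expect no serious obstacle here, since the real content --- the equivalence of closure under a single operation with tunedness --- is already isolated in \eqref{eq:pretune1}--\eqref{eq:pretune2}. The only point requiring care is the bookkeeping that passes between the Boolean subalgebra generated by a partition and the full modal subalgebra generated by $\clQ$: one must check that tunedness and closure are genuinely local to each $\Di$, so that quantifying over all $\Di\in\Al$ produces no interaction between the modalities. This is immediate from the fact that each $g_\Di$ distributes over finite unions independently of the others.
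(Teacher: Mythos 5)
Your proof is correct and follows essentially the same route as the paper: the paper's own justification is exactly the discussion preceding Definition \ref{def:GenTuned} (atoms of a finite subalgebra give a tuned refinement; conversely the Boolean algebra generated by a tuned refinement is a finite modal subalgebra containing $\clQ$), with the bridge \eqref{eq:pretune1}--\eqref{eq:pretune2} playing the same role. Your only addition is spelling out the trivial passage from one operation $g$ to the family $(g_\Di)_{\Di\in\Al}$ and the reduction of part (2) to part (1), both of which the paper leaves implicit.
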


\improve{Example: topological space}

\begin{remark}\label{rem:generalpoweset}
The second statement of Proposition \ref{prop:tuned1}, a criterion of local finiteness of $C$,  can be extended for the case when $C$ is a proper subalgebra of $(\pwr(X),(g_\Di)_{\Di\in\Al})$:
$C$ is locally finite iff
if for every partition of $X$ induced by finitely many elements of $C$, there exists its finite refinement, which is tuned in the powerset algebra $(\pwr(X),(g_\Di)_{\Di\in\Al})$.
\end{remark}

\begin{proposition}
 If the algebra $(\pwr(X),(g_\Di)_{\Di\in\Al})$ is tunable, then its logic has the finite model property.
\end{proposition}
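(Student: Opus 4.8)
The plan is to derive the statement directly from the two facts already in place, with no fresh argument needed. Write $C=(\pwr(X),(g_\Di)_{\Di\in\Al})$. First I would invoke the second part of Proposition~\ref{prop:tuned1}: since $C$ is tunable, it is locally finite. This is the only place where the tunability hypothesis enters, and it carries all of the real content, via the tuned-refinement analysis that precedes it.

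It then remains to pass from local finiteness of the single algebra $C$ to the finite model property of its logic. By the convention recalled in Section~\ref{sec:prel}, $\Log{C}=\Log{\{C\}}$, so $C$ is a one-element class of locally finite algebras, and the conclusion is exactly the instance of the earlier proposition asserting that the logic of a class of locally finite algebras has the finite model property. For completeness I would recall why this holds here. Put $L=\Log{C}$ and suppose $\vf\notin L$, so $\vf$ fails in $C$. By the standard observation that a formula is valid in an algebra iff it is valid in each of its finitely generated subalgebras, $\vf$ already fails in some finitely generated subalgebra $A$ of $C$; local finiteness forces $A$ to be finite. Moreover $A\mo L$, since validity of identities is inherited under taking subalgebras and $C\mo L$. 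Hence every non-theorem of $L$ is refuted on a finite $L$-algebra, so $L$ is the logic of its finite algebras, i.e. it has the finite model property.

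There is essentially no obstacle to overcome: the substance is packaged entirely in Proposition~\ref{prop:tuned1} together with the proposition characterising the logic of locally finite algebras. The only matters requiring attention are bookkeeping ones — that $\Log{C}$ abbreviates $\Log{\{C\}}$, and that a finitely generated subalgebra of $C$ is simultaneously finite (by local finiteness) and an $L$-algebra (by subalgebra-closure of identities) — and both are immediate from the definitions fixed earlier.
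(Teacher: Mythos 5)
Your proof is correct and takes essentially the same route as the paper's: tunability yields local finiteness via Proposition~\ref{prop:tuned1}, and then the logic of the algebra coincides with the logic of its finitely generated --- hence finite --- subalgebras, which is exactly the paper's one-line argument. You merely spell out the bookkeeping (subalgebras inherit validity of $L$, non-theorems are refuted in finitely generated subalgebras) that the paper leaves implicit.
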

\begin{proof}
The logic of an algebra is the logic of its finitely generated subalgebras, which are finite in our setting.
\end{proof}

While local finiteness of an algebra guarantees the finite model property of its logic $L$, it does not imply local tabularity of $L$ (in other terms, the variety generated by a locally finite algebra is generated by its finite members, but need not be locally finite).  The  property of local tabularity can be ensured by the following criterion given by A. Maltsev \cite{Malcev73}.

\smallskip
A class  $\clA$ of algebras of a finite signature is said to be
{\em uniformly locally finite}, if
there exists a function $f:\omega\to \omega$ such that the cardinality of a subalgebra of any $B\in \clA$ generated by
$k<\omega$ elements does not exceed $f(k)$.\improve{copy}
\begin{theorem}\cite[Section 14, Theorem 3]{Malcev73}\label{Malcev73}.
Local finiteness of the variety 
generated by a class $\clA$ of algebras 
is equivalent to uniform local finiteness of $\clA$.
\end{theorem}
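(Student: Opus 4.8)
The plan is to route both implications through the free $k$-generated algebra $F_k$ in the variety $V(\clA)$ generated by $\clA$, using the standard machinery of universal algebra (Birkhoff). Recall the facts I will invoke: for each $k<\omega$ the variety $V(\clA)$ has a free algebra $F_k$ on $k$ generators; every $k$-generated algebra in $V(\clA)$ is a homomorphic image of $F_k$; since $V(\clA)=HSP(\clA)$ validates exactly the identities valid in $\clA$, for $k$-ary terms $s,t$ we have $s=t$ in $F_k$ iff $s=t$ holds in every member of $\clA$ under every assignment, i.e.\ $\clA\mo s=t$; and finally $V(\clA)$ is locally finite iff $F_k$ is finite for every $k$, because the finitely generated members of $V(\clA)$ are precisely the homomorphic images of the $F_k$.

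For the direction from local finiteness of $V(\clA)$ to uniform local finiteness of $\clA$, I would simply put $f(k)=|F_k|$, which is finite by assumption. Any subalgebra of a $B\in\clA$ generated by $k$ elements lies in $V(\clA)$ and is $k$-generated, hence is a homomorphic image of $F_k$; so its cardinality is at most $|F_k|=f(k)$, and $\clA$ is uniformly locally finite with bound $f$.

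The converse is where the argument has content. Assume $\clA$ is uniformly locally finite with bound $f$, fix $k$, and put $n=f(k)$. Consider all subalgebras generated by $k$ elements inside members of $\clA$, regarded as algebras equipped with their distinguished generating $k$-tuple. Each has at most $n$ elements, and the signature is finite, so up to isomorphism (respecting the distinguished generators) there are only finitely many of them, say $(C_1,\bar c_1),\ldots,(C_N,\bar c_N)$. I claim $F_k$ embeds into the finite product $\prod_{j\le N} C_j$ via the map sending the $i$-th free generator to the tuple whose $j$-th coordinate is the $i$-th entry of $\bar c_j$. Indeed, for $k$-ary terms $s,t$ the images agree iff $s=t$ holds in each $(C_j,\bar c_j)$; but every assignment of the generators into a member of $\clA$ generates a $k$-generated subalgebra of size at most $n$, hence factors through one of the $(C_j,\bar c_j)$, so this is equivalent to $\clA\mo s=t$, i.e.\ to $s=t$ in $F_k$. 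Thus the map is a well-defined embedding, $|F_k|\le n^N<\omega$, and since $k$ was arbitrary, $V(\clA)$ is locally finite.

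The main obstacle is this converse direction, and specifically the step bounding $|F_k|$: one must see that $F_k$ sits as a subdirect product of the $k$-generated subalgebras occurring in $\clA$, and that finitely many of them suffice. This is exactly where both the uniform cardinality bound $f(k)$ and the finiteness of the signature enter — the latter to guarantee only finitely many isomorphism types of $k$-generated algebras of size at most $n$.
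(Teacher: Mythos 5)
There is nothing in the paper to compare your proof against: the statement is quoted from Maltsev's book \cite[Section 14, Theorem 3]{Malcev73} and the paper gives no proof of it. Judged on its own, your argument is correct, and it is essentially the standard universal-algebraic proof. The easy direction (taking $f(k)=|F_k|$ and using that every $k$-generated subalgebra of a member of $\clA$ is a homomorphic image of $F_k$) is fine. The substantive direction is also sound: finiteness of the signature plus the bound $f(k)$ give only finitely many isomorphism types $(C_1,\bar c_1),\ldots,(C_N,\bar c_N)$ of $k$-generated subalgebras-with-generators, every assignment into a member of $\clA$ factors through one of them, and your chain of equivalences (images agree iff $s=t$ holds in every $(C_j,\bar c_j)$ iff $\clA$ validates $s=t$ iff $s=t$ in $F_k$) delivers simultaneously the well-definedness and the injectivity of the map into $\prod_{j\le N}C_j$, whence $|F_k|\le f(k)^N<\omega$. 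Two points you could make explicit, though neither is a gap: first, to invoke the universal property of $F_k$ you should note that each $C_j$ lies in $\mathbf{S}(\clA)$, hence $\prod_j C_j$ lies in the variety generated by $\clA$ (alternatively, your equivalence chain already shows that the term-wise definition descends to $F_k$, so the universal property is not even needed); second, you correctly isolate where the finite-signature hypothesis enters --- this hypothesis is built into the paper's definition of uniform local finiteness, so your use of it is legitimate.
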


Clearly, for algebras on Boolean base, uniform local finiteness can be given via upper bound on the number of  atoms. In our terms, this pertains to the size of tuned partitions. Hence, we get our next definition and criterion.
\begin{definition}\label{def:GenTuned:uniform}
A class $\clA$ of powerset $\Al$-algebras is said to be
{\em $f$-tunable} for a function $f:\omega\to\omega$,
if for every $C=(\pwr(X),(g_\Di)_{\Di\in\Al})\in\clA$, for every finite partition $\clV$ of $X$ there exists a refinement
$\clU$ of $\clV$ such that $|\clU|\leq f(|\clV|)$ and $\clU$ is tuned in $C$.
The class $\clA$ is \em{uniformly tunable}, if it is $f$-tunable for some $f:\omega\to\omega$.
\end{definition}
\begin{theorem}\label{thm:LTforPowerset}
For a class $\clA$ of powerset $\Al$-algebras, $\Log(\clA)$ is locally tabular iff $\clA$ is uniformly tunable.
\end{theorem}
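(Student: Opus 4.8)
The plan is to funnel the statement through Maltsev's theorem (Theorem~\ref{Malcev73}) and the finite-subalgebra/tuned-partition correspondence of Proposition~\ref{prop:tuned1}. First I would reduce to a purely algebraic equivalence. By Proposition~\ref{prop:LT-basic-equivs}, $\Log(\clA)$ is locally tabular iff the variety of its algebras is locally finite, and the variety of $\Log(\clA)$-algebras is exactly the variety generated by $\clA$: an algebra validates $\Log(\clA)$ iff it satisfies every identity valid throughout $\clA$, and by Birkhoff's theorem these algebras form the variety generated by $\clA$. Theorem~\ref{Malcev73} then says this variety is locally finite iff $\clA$ is uniformly locally finite. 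Hence it suffices to prove that $\clA$ is uniformly locally finite iff $\clA$ is uniformly tunable.

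The bridge between the two uniform notions is Proposition~\ref{prop:tuned1}(1) together with the size bookkeeping visible in its proof. Fix $C=(\pwr(X),(g_\Di)_{\Di\in\Al})\in\clA$ and a finite $\clQ\subseteq\pwr(X)$, and let $A$ be the modal subalgebra generated by $\clQ$. If $A$ is finite, its atoms form a finite partition $\clU$ that is tuned in $C$ and refines $X{/}{\eq{\clQ}}$, with $|A|=2^{|\clU|}$; conversely, any finite tuned refinement $\clU'$ of $X{/}{\eq{\clQ}}$ generates a Boolean subalgebra already containing $A$, so $|A|\leq 2^{|\clU'|}$. Thus the cardinality of a generated subalgebra and the size of a tuned refinement control each other, and the only nontrivial point is matching the parameter ``number of generators'' against the parameter ``partition size''.

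For uniform local finiteness $\Rightarrow$ uniform tunability, suppose every $k$-generated subalgebra of every member of $\clA$ has cardinality at most $f(k)$. Given a finite partition $\clV$ of $X$, take $\clQ=\clV$; since $\clV$ is itself a partition we have $X{/}{\eq{\clV}}=\clV$, so the subalgebra generated by $\clV$ has size at most $f(|\clV|)$, and its atom partition $\clU$ is a tuned refinement of $\clV$ with $|\clU|\leq f(|\clV|)$. Hence $\clA$ is $f$-tunable. For the converse, suppose $\clA$ is $g$-tunable, with $g$ nondecreasing (replacing $g(n)$ by $\max_{j\leq n}g(j)$ if needed). Given $\clQ$ with $|\clQ|=k$, the partition $\clV=X{/}{\eq{\clQ}}$ has at most $2^k$ blocks, so $g$-tunability yields a tuned refinement $\clU$ of $\clV$ with $|\clU|\leq g(2^k)$; by the bound above the subalgebra generated by $\clQ$ has cardinality at most $2^{g(2^k)}$. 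Thus $\clA$ is uniformly locally finite with $f(k)=2^{g(2^k)}$.

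The argument is essentially bookkeeping once the reductions are in place, so I do not anticipate a deep obstacle. The point requiring the most care is the parameter translation in the last step: a generating set of size $k$ induces a partition of exponential size $2^k$, and recovering a subalgebra cardinality from a tuned partition costs another exponential, so the witnessing functions $f$ and $g$ are related by iterated exponentials rather than equality. This is harmless for the qualitative equivalence being proved, but it forces one to keep the two parameters distinct rather than hoping for a single bounding function on both sides.
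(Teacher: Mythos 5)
Your proposal is correct and follows essentially the same route the paper intends: reduce local tabularity of $\Log(\clA)$ to local finiteness of the variety generated by $\clA$ (Proposition~\ref{prop:LT-basic-equivs} plus Birkhoff), apply Maltsev's theorem (Theorem~\ref{Malcev73}), and translate uniform local finiteness into uniform tunability via the atoms/tuned-partition correspondence of Proposition~\ref{prop:tuned1}. The exponential parameter bookkeeping you spell out is precisely the content of the paper's remark that, for algebras on a Boolean base, uniform local finiteness amounts to a bound on the number of atoms of finitely generated subalgebras, i.e., on the size of tuned partitions.
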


\subsection{Locally finite algebras of frames}

Now we consider the case when the modal operation $g$ on $\pwr(X)$ is induced by a binary relation $R$ on $X$, that is $g(V)=R^{-1}[V]$ for $V\subseteq X$.
In this case, the condition \eqref{eq:pretune2} takes the following form:
\begin{equation}\label{eq:part}
\text{$\AA\, U,V\in \clU$\,} (\EE a\in U  \EE b\in V \, (aR b)  \,\Rightarrow\, \AA a\in U  \EE b\in V \, (aR b)).
\end{equation}
\begin{definition}\label{def:tuned}
A partition $\clU$ of $X$ is said to be {\em $R$-tuned} for a binary relation $R$ on $X$, if $\clU$ satisfies \eqref{eq:part}.
A partition $\clU$ of $X$ is said to be {\em tuned in a frame $\frF=(X,(R_\Di)_{\Di\in \Al})$}, if it is $R_\Di$-tuned for every $\Di\in \Al$.
\end{definition}
To the best of our knowledge, tuned partitions of frames were initially considered
by H{\aa}kan Franz\'{e}n in his semantic proof of Bull's theorem, see \cite{Franz-Bull}.

\smallskip
The notions and facts from the previous subsection transfer immediately
for the case of frames: by the tunability of a frame we  mean the tunability of its algebra.
However, the concretization \eqref{eq:part} of a more abstract \eqref{eq:pretune2} turns out to be a very convenient tool for analyzing local finiteness of modal algebras and local tabularity of modal logics. So we consider it in details.


\begin{proposition}\label{prop:tuned-in-forms}
Let $\clU$ be a finite partition of a frame $F=(X,(R_\Di)_{\Di\in \Al})$, $\sim$ the corresponding equivalence, that is $\clU\,=\,X{/}{\sim}$.
TFAE:
\begin{enumerate}[(a)]
\item\label{prop:tuned-a} $\clU$ is tuned.
\item\label{prop:tuned-b} Unions of sets in $\clU$ form a subalgebra of the modal algebra $\Alg(F)$.
\hide{
\item The Boolean algebra generated by $\clU$ in $\pwr(X)$ is closed under the operations $R_\Di^{-1}[~]$,
and hence form a modal subalgebra of $\Alg(F)$.
}
\item\label{prop:tuned-c} The canonical projection $a\mapsto [a]_\sim$ is a p-morphism from $F$ onto the frame $F_\sim$. Here $F_\sim$ is the frame $(\clU,(\ff{R}_\Di)_\Al)$, where
    $[a]_\sim\ff{R}_\Di  [b]_\sim$ iff $a'R_\Di b'$ for some $a'\sim a$ and $b'\sim b$.
\item\label{prop:tuned-d} For every $U,V\in \clV$ and $\Di\in \Al$, we have:
$$
\text{$U\subseteq R_\Di^{-1}[V]$ or $U\cap R_\Di^{-1}[V]=\emp$}.
$$
\improve{
\item\label{prop:tuned-e} For any $k$-model $M=(F,\val)$, if there is a finite set $\Psi=\{\psi_i\}_{i<k}$ of $k$-formulas such that
$\clU=\{\vext(\psi)\mid \psi\in \Psi\}$, then every $k$-formula is $M$ equivalent to
an elementary conjunction of formulas in $\Psi$.}
\item\label{prop:tuned-f} ${\sim}\circ R_\Di\;\subseteq\; R_\Di\circ{\sim}$ for each $\Di\in \Al$. \improve{Bisim from  slides?}
\end{enumerate}
\end{proposition}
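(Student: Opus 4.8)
The plan is to use (a) as a hub and prove each of the remaining conditions equivalent to it; since (a) is exactly the relational instance of the abstract tunedness condition \eqref{eq:pretune2}, almost everything reduces to unfolding definitions, and the work done around \eqref{eq:pretune1}--\eqref{eq:pretune2} can be reused verbatim. Throughout I fix $\Di\in\Al$ and prove each equivalence for a single $\Di$, then conjoin over $\Al$. I would begin with the cheapest step, (a) $\Leftrightarrow$ (d), which is purely propositional: unfolding $R_\Di^{-1}[V]=\{a\mid\EE b\in V\,(aR_\Di b)\}$, the assertion $U\cap R_\Di^{-1}[V]\neq\emp$ is exactly $\EE a\in U\,\EE b\in V\,(aR_\Di b)$, while $U\se R_\Di^{-1}[V]$ is exactly $\AA a\in U\,\EE b\in V\,(aR_\Di b)$. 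Hence (a), which by \eqref{eq:part} reads ``nonempty intersection implies inclusion'', and (d), which reads ``inclusion or empty intersection'', are contrapositives of one another.

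Next I would handle (a) $\Leftrightarrow$ (b) and (a) $\Leftrightarrow$ (c) together, as both are one-line unfoldings. The unions of members of $\clU$ are precisely the elements of the Boolean subalgebra of $\pwr(X)$ generated by $\clU$, so (b) says this algebra is closed under every $g_\Di=R_\Di^{-1}[\cdot]$; by the analysis preceding \eqref{eq:pretune1}, closure under $g_\Di$ is equivalent to $\clU$ being $g_\Di$-tuned in the sense of \eqref{eq:pretune2}, and \eqref{eq:part} is exactly that instance of \eqref{eq:pretune2} (with the names of $U,V$ merely interchanged). For (c), surjectivity of $\pi\colon a\mapsto[a]_\sim$ is immediate since every $\sim$-class is nonempty, and the forth clause $aR_\Di b\Rightarrow[a]_\sim\ff{R}_\Di[b]_\sim$ holds automatically from the definition of $\ff{R}_\Di$ (take witnesses $a'=a$, $b'=b$). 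Writing $U=[a]_\sim$ and noting $U\,\ff{R}_\Di\,V$ means $U\cap R_\Di^{-1}[V]\neq\emp$, the back clause for $\Di$ -- that whenever $U\,\ff{R}_\Di\,V$ there is, for \emph{this} $a$, some $b\in V$ with $aR_\Di b$ -- holds for all $a\in U$ precisely when $U\cap R_\Di^{-1}[V]\neq\emp$ implies $U\se R_\Di^{-1}[V]$, i.e.\ \eqref{eq:part}.

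Finally, (a) $\Leftrightarrow$ (f) is a direct computation with relational composition, read left to right so that ${\sim}\circ R_\Di$ relates $a$ to $c$ iff $a\sim b$ and $bR_\Di c$ for some $b$, while $R_\Di\circ{\sim}$ relates $a$ to $c$ iff $aR_\Di b'$ and $b'\sim c$ for some $b'$. Assuming (a): if $a\sim b$ and $bR_\Di c$, set $U=[a]_\sim=[b]_\sim$ and $V=[c]_\sim$; then $U\cap R_\Di^{-1}[V]\neq\emp$, so $U\se R_\Di^{-1}[V]$ by (a), giving $aR_\Di c'$ for some $c'\in V$, i.e.\ $(a,c)\in R_\Di\circ{\sim}$. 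Conversely, assuming (f): if $U\cap R_\Di^{-1}[V]\neq\emp$, fix $b\in U$, $c\in V$ with $bR_\Di c$; then for every $a\in U$ we have $a\sim b$ and $bR_\Di c$, so $(a,c)\in{\sim}\circ R_\Di\se R_\Di\circ{\sim}$ yields $c'\sim c$ with $aR_\Di c'$, whence $a\in R_\Di^{-1}[V]$ and thus $U\se R_\Di^{-1}[V]$.

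I do not expect a serious obstacle: every clause is a reformulation of the same local saturation property, so the only real care is bookkeeping -- in particular matching the interchanged roles of $U$ and $V$ when identifying \eqref{eq:part} with \eqref{eq:pretune2}, and fixing the left-to-right convention for composition in (f) (the opposite convention would produce the transpose condition rather than (a)). One could equally package these as a cycle (a) $\Rightarrow$ (b) $\Rightarrow$ (c) $\Rightarrow$ (d) $\Rightarrow$ (f) $\Rightarrow$ (a), but routing everything through (a) keeps each step a single unfolding.
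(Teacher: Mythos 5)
Your proof is correct and takes essentially the route the paper intends: the paper gives no self-contained argument but instead notes that \eqref{prop:tuned-d} and \eqref{prop:tuned-f} are restatements of \eqref{eq:part}/\eqref{eq:pretune2} and cites Franz\'{e}n for (a)$\Leftrightarrow$(c) and Blok for (a)$\Leftrightarrow$(b), and your hub-at-(a) unfoldings simply carry out these verifications explicitly (including reusing the paper's own analysis around \eqref{eq:pretune1}--\eqref{eq:pretune2} for (b)). Your care about the composition convention in (f) is warranted: the inclusion ${\sim}\circ R_\Di\subseteq R_\Di\circ{\sim}$ is equivalent to tunedness only under the left-to-right (diagrammatic) reading, exactly as you note, and the paper never pins that convention down.
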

The equivalence \eqref{prop:tuned-a} $\Iff$ \eqref{prop:tuned-c} is due to Franz\'{e}n, see \cite[Lemma 2]{Franz-Bull}.
Its consequence, the equivalence \eqref{prop:tuned-a} $\Iff$ \eqref{prop:tuned-b}, is explicitly stated in
\cite[Corollary 3.3]{Blok1980}. The condition  \eqref{prop:tuned-d} is a form of  \eqref{eq:pretune2} for relation-induced modal operations, and \eqref{prop:tuned-f} is  a laconic form of \eqref{eq:part}.

\improve{

\begin{example}
\IS{ natural.}
\end{example}
}

A characterization of locally tabular logics is given by
\begin{corollary}\label{cor:tuned-for-frames}~
\begin{enumerate}
\item The logic of a class  $\clF$ of Kripke frames is locally tabular
iff $\clF$  is uniformly tunable.
\item A logic $L$ is locally tabular iff $L$ is the logic of a uniformly tunable class of Kripke frames.
\end{enumerate}
\end{corollary}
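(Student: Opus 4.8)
The plan is to deduce both parts directly from Theorem \ref{thm:LTforPowerset}, after observing that every frame-theoretic notion appearing in the statement is just the corresponding algebraic notion transported along $F\mapsto\Alg(F)$. First I would fix this dictionary. For a class $\clF$ of Kripke frames, set $\clA=\{\Alg(F)\mid F\in\clF\}$; each $\Alg(F)$ is a powerset $\Al$-algebra, since for Kripke frames $\clP$ is the full powerset. Validity of a formula in $F$ coincides with its validity in $\Alg(F)$, so $\Log(\clF)=\Log(\clA)$. By the convention that tunability of a frame means tunability of its algebra, the phrase ``$\clF$ is uniformly tunable'' is by definition ``$\clA$ is uniformly tunable'' in the sense of Definition \ref{def:GenTuned:uniform}. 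Moreover, by Proposition \ref{prop:tuned-in-forms} — specifically the fact that \eqref{eq:part} is the form of the general tunedness condition \eqref{eq:pretune2} for relation-induced operations — a partition of $X$ is $R_\Di$-tuned in $F$ iff it is $g_\Di$-tuned in $\Alg(F)$. Hence the frame notion and the algebra notion of ``tuned partition'' genuinely coincide, and so do their uniform versions.

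With this dictionary, part (1) is immediate: $\Log(\clF)=\Log(\clA)$ is locally tabular iff $\clA$ is uniformly tunable (Theorem \ref{thm:LTforPowerset}), which holds iff $\clF$ is uniformly tunable.

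For part (2), the direction $(\Leftarrow)$ is just an instance of part (1): if $L=\Log(\clF)$ with $\clF$ uniformly tunable, then $L$ is locally tabular. For $(\Rightarrow)$, suppose $L$ is locally tabular. Then $L$ has the finite model property and in particular is Kripke complete (as recorded in Section \ref{sec:prel}), so $L=\Log(\clF)$ for some class $\clF$ of Kripke frames. Since $\Log(\clF)=L$ is locally tabular, part (1) yields that $\clF$ is uniformly tunable, exhibiting $L$ as the logic of a uniformly tunable class of Kripke frames.

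The substantive content is entirely in the reduction step, and I expect the main point to watch — rather than a genuine obstacle — to be the bookkeeping that the frame and algebra versions of ``tuned'' and ``uniformly tunable'' line up; this is precisely what Proposition \ref{prop:tuned-in-forms} together with the transfer convention on frame tunability secures. The only non-bookkeeping ingredient is the use of Kripke completeness of locally tabular logics in the forward direction of (2), which is already available as a consequence of the finite model property.
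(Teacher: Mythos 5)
Your proposal is correct and follows exactly the paper's own route: part (1) is obtained by reformulating Theorem \ref{thm:LTforPowerset} for relation-induced operations via the frame--algebra dictionary $F\mapsto\Alg(F)$, and part (2) uses Kripke completeness of locally tabular logics. Your write-up merely spells out the bookkeeping that the paper's two-line proof leaves implicit.
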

\begin{proof}
The first statement is a reformulation of Theorem \ref{thm:LTforPowerset} for relation-induced modal operations.
The second statement follows from the fact that any locally tabular logic is Kripke complete.
\end{proof}

\begin{remark} This corollary was first formulated in \cite{LocalTab16AiML}, where its proof was given in terms of Kripke models (the proof was given for the unimodal case; the polymodal version of this proof is given in \cite{LTViaSums}).
\later{To the best of our knowledge, Theorem \ref{thm:LTforPowerset} has never been formulated before.\IS{Why these details?}}
\end{remark}

\medskip

Subalgebras of modal algebras of Kripke frames characterize any modal logic, so we explicitly restate the results of the previous subsection for them as well. We can easily adapt our notions for this case,
similarly to the reasoning provided in Remark \ref{rem:generalpoweset}.
\begin{definition}\label{def:tunable-generalFrame}
A general frame $F=(X,(R_\Di)_{\Di\in\Al},\clP)$ is {\em tunable},
if for every finite partition $\clV$ of $X$ with
\begin{equation}\label{eq:def-restr-on-general}
\clV\subseteq \clP
\end{equation}
 there exists its finite refinement, which is $R_\Di$-tuned for each $\Di\in \Al$.
If, in a class  $\clF$ of general frames, for some fixed $f:\omega\to\omega$,  for every $F\in\clF$, the size
of a tuned refinement $\clU$ of a finite partition $\clV$ of $F$ can be bounded by $f(|\clV|)$, then $\clF$ is \em{uniformly tunable}.
\end{definition}
So the only difference between the tunability in Kripke and general frames is the extra condition \eqref{eq:def-restr-on-general} for the latter.
In particular, we have:
\begin{corollary}\label{cor:tunable-frame}
The algebra of a general frame $F$ is locally finite iff $\frF$ is tunable.
\end{corollary}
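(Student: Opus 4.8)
The plan is to recognize that this corollary is the relational instance of the subalgebra criterion already recorded in Remark \ref{rem:generalpoweset}. First I would observe that the algebra $\Alg(F)$ of the general frame $F=(X,(R_\Di)_{\Di\in\Al},\clP)$ is, by definition, the Boolean algebra $\clP$ equipped with the operations $g_\Di(V)=R_\Di^{-1}[V]$; thus $\Alg(F)$ is precisely a subalgebra $C=(\clP,(g_\Di)_{\Di\in\Al})$ of the full powerset modal algebra $(\pwr(X),(g_\Di)_{\Di\in\Al})$, where each $g_\Di$ is relation-induced. This places us exactly in the situation of Remark \ref{rem:generalpoweset}, so it remains to match its hypothesis with the tunability of $F$ in the sense of Definition \ref{def:tunable-generalFrame}.

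The matching would rest on two routine identifications. First, the finite partitions $\clV$ of $X$ with $\clV\se\clP$ are exactly the partitions induced by finitely many elements of $C$: a partition $X/{\eq{\clQ}}$ for a finite $\clQ\se C$ has blocks that are Boolean combinations of members of $\clQ$ and hence lie in $\clP=C$, while conversely any finite partition $\clV$ with $\clV\se\clP$ satisfies $\clV=X/{\eq{\clV}}$ and so is induced by the finite subset $\clV$ of $C$. Second, for a relation-induced operation $g_\Di$, being $g_\Di$-tuned in the sense of \eqref{eq:pretune2} is literally the condition \eqref{eq:part}, i.e.\ $R_\Di$-tuned in the sense of Definition \ref{def:tuned}; hence a refinement is tuned in the ambient powerset algebra iff it is $R_\Di$-tuned for every $\Di\in\Al$. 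Under these identifications, the condition of Remark \ref{rem:generalpoweset} for $C$ is word-for-word the tunability of $F$.

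With the dictionary in place, the corollary follows from Remark \ref{rem:generalpoweset}; the only point that requires care is that the tuned refinement supplied by tunability need not consist of elements of $\clP$, so one cannot argue inside $C$ alone. The clean way around this is to pass through the ambient algebra: for a finite $\clQ\se C$, the subalgebra of $C$ generated by $\clQ$ coincides with the subalgebra of $(\pwr(X),(g_\Di)_{\Di\in\Al})$ generated by $\clQ$, since $C$ is a subalgebra and term evaluation agrees. For the forward direction, local finiteness makes this subalgebra finite, and its atoms form a finite partition refining $X/{\eq{\clQ}}$ which, being the atom set of a finite modal subalgebra, is tuned by \eqref{eq:pretune1}. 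For the backward direction, a finite tuned refinement $\clU$ of $X/{\eq{\clQ}}$ generates, by \eqref{eq:pretune1}, a finite modal subalgebra of the ambient powerset algebra; it contains $\clQ$ because $\clU$ refines $X/{\eq{\clQ}}$, so the subalgebra generated by $\clQ$ is contained in it and therefore finite. I expect this last bookkeeping step to be the main (and only mild) obstacle.
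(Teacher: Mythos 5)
Your proof is correct and takes essentially the same route as the paper, which states this corollary without a separate proof precisely because it is the relation-induced specialization of Remark \ref{rem:generalpoweset}: your identification of finite partitions $\clV\subseteq\clP$ with partitions induced by finite subsets of the algebra, and of $g_\Di$-tunedness with $R_\Di$-tunedness, is the intended reading, and your generated-subalgebra bookkeeping (atoms of a finite subalgebra in one direction, the finite Boolean algebra generated by a tuned refinement in the other) is exactly the argument the paper uses to justify Proposition \ref{prop:tuned1}. Nothing further is needed.
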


This gives a criterion of local tabularity for the case of general frames:
\begin{corollary}\label{cor:tuned-for-Genframes}~
The logic of a class  $\clF$ of general frames is locally tabular
iff $\clF$  is uniformly tunable.
\end{corollary}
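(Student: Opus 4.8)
The statement to prove is Corollary~\ref{cor:tuned-for-Genframes}: the logic of a class $\clF$ of general frames is locally tabular iff $\clF$ is uniformly tunable.

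The plan is to reduce this to the already-established machinery for powerset algebras, namely Theorem~\ref{thm:LTforPowerset}, by passing through the algebras of the general frames and accounting for the single structural difference highlighted in the text --- the restriction \eqref{eq:def-restr-on-general} that admissible partitions lie inside $\clP$. The key observation is that $\Log(\clF)=\Log(\clA)$, where $\clA=\{\Alg(F)\mid F\in\clF\}$ is the class of algebras of the frames in $\clF$, and each such $\Alg(F)$ is a subalgebra of the full powerset algebra $(\pwr(X),(g_\Di)_{\Di\in\Al})$ with $g_\Di$ induced by $R_\Di$. Local tabularity of $\Log(\clA)$ is governed by uniform local finiteness of $\clA$ via Malcev's Theorem~\ref{Malcev73}, exactly as in the powerset setting.

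First I would spell out the correspondence between generators and partitions. For a general frame $F=(X,(R_\Di)_{\Di\in\Al},\clP)$, a finite set of generators $\clQ\subseteq\clP$ of a subalgebra of $\Alg(F)$ induces a finite partition $X{/}\eq{\clQ}$ whose blocks lie in $\clP$ (since $\clP$ is a Boolean subalgebra of $\pwr(X)$ closed under the relevant operations), so the admissible partitions $\clV$ with $\clV\subseteq\clP$ of Definition~\ref{def:tunable-generalFrame} are precisely those arising from finite subsets of $\clP$. By the adaptation of Proposition~\ref{prop:tuned1}(1) described in Remark~\ref{rem:generalpoweset} and Corollary~\ref{cor:tunable-frame}, the subalgebra of $\Alg(F)$ generated by $\clQ$ is finite iff $X{/}\eq{\clQ}$ admits a finite refinement tuned in the powerset algebra $(\pwr(X),(g_\Di)_{\Di\in\Al})$, which by \eqref{eq:part} means $R_\Di$-tuned for each $\Di\in\Al$. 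Thus the number of atoms of the generated subalgebra equals the minimal size of such a tuned refinement.

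Next I would match the quantitative notions. Uniform tunability of $\clF$ (Definition~\ref{def:tunable-generalFrame}) asserts exactly that there is a single $f:\omega\to\omega$ bounding, for every $F\in\clF$ and every admissible $\clV\subseteq\clP$, the size of a tuned refinement by $f(|\clV|)$. By the preceding paragraph this is equivalent to the existence of $f$ bounding the size of any $k$-generated subalgebra of any $\Alg(F)$ by $f(k)$ --- that is, uniform local finiteness of the class $\clA$ in the sense preceding Theorem~\ref{Malcev73}. By Theorem~\ref{Malcev73}, this is equivalent to local finiteness of the variety generated by $\clA$; and the variety generated by $\clA$ is locally finite iff $\Log(\clA)=\Log(\clF)$ is locally tabular (by Proposition~\ref{prop:LT-basic-equivs}, since local tabularity of a logic is local finiteness of its variety of algebras). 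Chaining these equivalences yields the corollary.

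The main obstacle I expect is purely bookkeeping rather than conceptual: one must verify carefully that restricting admissible partitions to $\clV\subseteq\clP$ does not weaken the correspondence, i.e.\ that every finitely generated subalgebra of $\Alg(F)$ is captured by some admissible $\clV$ and conversely, so that the bound $f$ witnessing uniform tunability is the very same function witnessing uniform local finiteness of $\clA$. Once that identification is clean, the proof is a short citation of Theorem~\ref{thm:LTforPowerset} (or its proof via Theorem~\ref{Malcev73}) adapted through Remark~\ref{rem:generalpoweset}, exactly parallel to the derivation of Corollary~\ref{cor:tuned-for-frames} from Theorem~\ref{thm:LTforPowerset} but with the extra condition \eqref{eq:def-restr-on-general} inserted throughout.
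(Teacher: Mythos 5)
Your proposal is correct and takes essentially the same route the paper intends: the paper presents this corollary as an immediate adaptation of Theorem~\ref{thm:LTforPowerset} for subalgebras of powerset algebras (via Remark~\ref{rem:generalpoweset}, Maltsev's Theorem~\ref{Malcev73}, and Corollary~\ref{cor:tunable-frame}), and your argument fleshes out exactly that adaptation through $\Log(\clF)=\Log(\clA)$ for $\clA=\{\Alg(F)\mid F\in\clF\}$. One cosmetic slip: the bound witnessing uniform tunability is not literally ``the very same function'' witnessing uniform local finiteness of $\clA$ --- a $k$-generated subalgebra induces a partition with up to $2^k$ blocks, and a tuned refinement of size $n$ bounds the subalgebra's cardinality by $2^n$, so the two functions differ by exponential compositions --- but this does not affect the equivalence of the two existence statements.
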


\hide{
In terms of logics, we have:

\begin{corollary}\label{cor:tunable}
 If $L=\Log(\clF)$ for a uniformly tunable class of general frames, then $L$ is locally
 tabular.
\end{corollary}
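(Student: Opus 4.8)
The plan is to derive this corollary as the uniform, quantitative refinement of the single-frame equivalence in Corollary~\ref{cor:tunable-frame}, threading the bounds of Maltsev's criterion through the tuned-partition correspondence. First I would replace frames by their algebras: setting $\clA=\{\Alg(F)\mid F\in\clF\}$ we have $\Log(\clF)=\Log(\clA)$, so by Proposition~\ref{prop:LT-basic-equivs} the logic $\Log(\clF)$ is locally tabular iff the variety of its algebras is locally finite. Since that variety is the one generated by $\clA$, Theorem~\ref{Malcev73} makes this equivalent to uniform local finiteness of $\clA$, i.e.\ the existence of $f\colon\omega\to\omega$ bounding the cardinality of every $k$-generated subalgebra of every $\Alg(F)$ by $f(k)$. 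It then suffices to show that uniform local finiteness of $\clA$ and uniform tunability of $\clF$ are equivalent.

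For this I would use the correspondence underlying Proposition~\ref{prop:tuned1} and Remark~\ref{rem:generalpoweset}: for finite $\clQ\subseteq\clP$, the subalgebra $A$ of $\Alg(F)$ generated by $\clQ$ is finite iff $X{/}{\eq{\clQ}}$ admits a finite refinement $\clU$ tuned in the powerset algebra $(\pwr(X),(g_\Di)_{\Di\in\Al})$, and then the atoms of $A$ form such a refinement, so that $|A|=2^{|\clU|}$. In one direction, given a finite partition $\clV\subseteq\clP$ (so $\clV=X{/}{\eq{\clV}}$), uniform local finiteness bounds the $|\clV|$-generated subalgebra $A$ by $|A|\le f(|\clV|)$, and its atoms give a tuned refinement of $\clV$ of size $\log_2|A|\le\log_2 f(|\clV|)$; this is exactly uniform tunability of $\clF$. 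In the other direction, for a $k$-generated subalgebra $A$ the induced partition $\clV=X{/}{\eq{\clQ}}$ satisfies $\clV\subseteq\clP$ and $|\clV|\le 2^{k}$, so uniform tunability yields a tuned refinement $\clU$ with $|\clU|\le f(2^{k})$; the Boolean algebra $B$ generated by $\clU$ in $\pwr(X)$ has $2^{|\clU|}$ elements, is closed under each $g_\Di$ because $\clU$ is tuned, and contains $\clQ$, hence contains $A$, giving $|A|\le 2^{f(2^{k})}$ and so uniform local finiteness.

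The one point I would be careful about is keeping the two bounding functions straight across the exponential/logarithmic change of scale between ``number of atoms of a generated subalgebra'' and ``size of a tuned refinement''; apart from this the argument is a transcription of the Kripke-frame case (Corollary~\ref{cor:tuned-for-frames}). The only structural difference is the constraint $\clV\subseteq\clP$ in Definition~\ref{def:tunable-generalFrame}: in the converse direction it is exactly what makes the generating partition $X{/}{\eq{\clQ}}$ admissible, while the tuned refinement $\clU$ of it need not lie in $\clP$ — and indeed need not, since the inclusion $A\subseteq B$ only requires $B$ to be closed under the operations of the full powerset algebra, not of $\Alg(F)$.
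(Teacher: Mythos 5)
Your proof is correct and follows essentially the same route the paper intends for this statement: the corollary is the one-directional consequence of Corollary~\ref{cor:tuned-for-Genframes}, which the paper obtains exactly as you do, by combining Maltsev's criterion (Theorem~\ref{Malcev73}) with the correspondence between atoms of finitely generated subalgebras and tuned refinements (Proposition~\ref{prop:tuned1}, Remark~\ref{rem:generalpoweset}), including the observation that the generating partition $X/{\eq{\clQ}}$ lies in $\clP$ while the tuned refinement need not. Your explicit bookkeeping of the bounds $f(2^k)$ and $2^{f(2^k)}$ (modulo replacing $f$ by a monotone majorant) just makes the paper's implicit quantitative argument fully precise.
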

}


We illustrate this discussion with two simple examples, where we consider two types of expansions: with the universal and with the difference modalities.

\begin{example}\label{eq:universalExpLT}
For a logic $L$, let $\un{L}$
be the expansion of $L$ with the universal modality \cite{GorankoPassy1992}.

It is known that
\begin{equation}\label{eq:ltU}
\text{$\un{L}$ is locally tabular iff $L$ is locally tabular.}
\end{equation}
The `only if' direction is trivial: $L$ is a fragment of $\un{L}$.
The `if' direction is also simple; we provide a syntactic and a semantic argument for it.

The syntactic argument follows from a well-known fact that every formula in $\un{L}$ is equivalent to a Boolean combination of formulas, 
where the universal modality can only appear once at the beginning of the formula \cite[Section 3.2]{GorankoPassy1992}; then one can use a well-known fact about local tabularity of the logic $\LS{5}$ of universal relations. 

A semantic argument for \eqref{eq:ltU} is given in \cite{Tacl2017}: trivially, every partition is tuned with respect to the universal relation.

 \later{Read Scrogg (double check)}
 \improve{check Halmos.\later{Maltcev - DC}


@article{halmos1956algebraic,
  author    = {Halmos, Paul R.},
  title     = {Algebraic Logic {I}: Monadic Algebras},
  journal   = {Compositio Mathematica},
  year      = {1956},
  volume    = {12},
  pages     = {217--249}
}

}

The following is immediate from \eqref{eq:ltU} and the finite height criterion for transitive logics \cite{Seg_Essay,Maks1975} (Theorem \ref{thm:thm-seg-maks}): for every extension $L$ of $\un{\LK{4}}$,
 \begin{equation}\label{eq:finh-U}
  \text{$L$ is locally tabular iff its $\Di$-fragment contains a formula of finite height.}
 \end{equation}
 Indeed, consider the $\Di$-fragment $L_0$ of $L$ and observe that $\un{L_0}\subseteq L$.\footnote{
 In \cite{Bezh-Mead2024},
 \eqref{eq:finh-U} is proved for extensions of  $\un{\LS{4}}$ by
 algebraic methods.}
 \later{\IS{include later}
 \footnote{
 In \cite{Bezh-Mead2024}, a particular case of \eqref{eq:finh-U} was rediscovered for extensions of  $\un{\LS{4}}$.}
 }

 Due to \cite{LocalTab16AiML} (Theorem \ref{thm:LTforLm}),  \eqref{eq:finh-U}  generalizes for extensions of
 $\un{L_m}$, where $L_m$ is defined by the formula $\Di^{m+1} p\imp \Di p\vee p$.
  In our terms, this means that every logic
  $\un{L_m}$ admits the finite height criterion: clearly, the universal modality fragment  has height 1.
 In general, \eqref{eq:ltU} takes the following form for every logic:
\begin{equation}\label{eq:finh-U-adm}
\text{$\un{L}$ admits the finite height criterion iff $L$  admits the finite height criterion.}
\end{equation}
\end{example}

\smallskip

\begin{example}\label{eq:diffExpLT}
Let $L$ be a logic over $\Al$, and let $\Al_1$ be the alphabet enriched with a new symbol $\neq$. This symbol will be interpreted as the
{\em difference modality} \cite{DeRijkeDiff}. Namely,
the logic $\diff{L}$ is defined as  the smallest logic over $\Al_1$ that contains $L$ and the following formulas:
\begin{equation}\label{eq:diff-axioms}
p\imp \Boxd\Did  p, \quad \Did \Did p\imp \Did p\vee p, \quad
\Di p\imp \Did p\vee p\text{ for each $\Di\in\Al$}.
\end{equation}
It is known that $\diff{L}$ is characterized by general frames $\frF=(X,(R_\Di)_{\Di\in \Al_1},\clP)$ such that the $\Al$-reduct of $F$ is an $L$-frame
and
\begin{equation}\label{eq:diffRel}
 R_{\neq} \cup  Id_X\; = \; X\times X.
\end{equation}
\improve{more refs; more words for the completeness; explain that this is a fragment}
\improve{Say: take point-generated subframes ..}

It easily follows from \eqref{eq:diffRel} that every partition of $X$ is tuned with respect to $R_{\neq}$; see \cite[Proposition 5.5]{LocalTab16AiML} for details.
From Corollary  \ref{cor:tuned-for-Genframes}, we obtain
the following 
generalization of \eqref{eq:finh-U-adm}:
\begin{equation}\label{eq:finh-Diff}
\text{$\diff{L}$ admits the finite height criterion iff $L$  admits the finite height criterion.}
\end{equation}
In particular, $\diff{L_m}$ are examples of logics with the difference modality that admit the finite height criterion.
\end{example}

\begin{remark}
While Example \ref{eq:universalExpLT} has been known before,
to the best of our knowledge
Example \ref{eq:diffExpLT} is new.
\end{remark}
\improve{Include? - (in particular, \eqref{} was explicitly formulated in \cite{Tacl2017})}

The previous examples show that in some cases, combining two locally tabular logics results in a locally tabular one. This is not the case in general. The simplest example is the fusion of two instances of a locally tabular logic $\LS{5}$: this logic is not even pretransitive.
\later{Below we discuss constructions  of polymodal logics that admit the finite height criterion.}

\improve{
Another example is the {\em temporal expansion.}
\begin{example}
$\temp{L}$. \IS{Wolter}; then transfer the fmp.
\end{example}
}

\improve{
\IS{change this text}
In the next section we discuss a subtler form of the tunability criterion and describe more polymodal logics that admit the finite height criterion.
}

\section{Polymodal logics that admit the finite height criterion}\label{sec:FHC}
In this section we address the following question: what additional conditions on a unimodal non-transitive logic, or on a polymodal logic, imply the finite height criterion of local tabularity?
The main results of this section are given in  Theorems \ref{thm:FinH-crit-canon} and \ref{thm:sumAFH},
and are based on the following  technical tools: the {\em cluster criterion} \cite{LocalTab16AiML}, which is discussed in the next subsection (Theorem \ref{thm:clust-crit}),  and the {\em lexicographic sum of logics}, discussed in Section \ref{sec:lex}.

\subsection{Finite height criterion via clusters}

\improve{
[New: subframe (or subinterval) logic $L$ is lt iff $L[1]$ is.]
}
\improve{Use logic from slides}

For a Kripke  frame $\frF=(X,(R_\Di)_{\Di\in \Al})$, the {\em restriction  $\frF\restr Y$  of $\frF$ to its subset $Y$} is the frame $(Y, (R_\Di\cap (Y\times Y))_{\Di\in\Al})$.

A frame $F$ is a {\em cluster}, if $a R_F^* b$ for all $a,b$ in $F$.
For a class $\clF$ of Kripke frames, $\clust{\clF}$ is the class of cluster-frames which are restrictions on clusters (as sets) occurring in frames in $\clF$:
$$
\clust{\clF}=\{F\restr C\mid C \text{ is a cluster in } F\in\clF\}.
$$

The following characterization of locally tabular logics was obtained in \cite{LocalTab16AiML}.
\begin{theorem}\label{thm:clust-crit}
The logic of a class $\clF$ of Kripke frames is locally tabular iff
the height of $\clF$ is finite and $\Log\clust{\clF}$ is locally tabular.
\end{theorem}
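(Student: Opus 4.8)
The plan is to work entirely through the tunability characterization of Corollary~\ref{cor:tuned-for-frames}: a logic presented by a class of Kripke frames is locally tabular iff that class is uniformly tunable. So I would show that $\clF$ is uniformly tunable iff both $\h{\clF}<\omega$ and $\clust{\clF}$ is uniformly tunable, and then read off the theorem.

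For the forward direction, assume $\Log(\clF)$ is locally tabular, hence $1$-finite. By Proposition~\ref{prop:pretr} it is pretransitive, say $m$-transitive, and by Theorem~\ref{thm:1-finite-to-m-h} its height is finite, so $B_h^*\in\Log(\clF)$ for some $h$; then \eqref{pretr:sem} and \eqref{eq:heigth-m} show every $F\in\clF$ has height at most $h$, giving $\h{\clF}\le h$. For the other half I would show uniform tunability passes to cluster restrictions. Given $F\restr C\in\clust{\clF}$ and a finite partition $\clV_0$ of $C$, extend it to a partition $\clV$ of the domain $X$ of $F$ by adding the block $X\setminus C$ when nonempty, so $|\clV|\le|\clV_0|+1$; uniform tunability of $\clF$ yields a tuned refinement $\clU$ of $\clV$ with $|\clU|\le f(|\clV_0|+1)$, and the blocks of $\clU$ contained in $C$ form a refinement $\clU_0$ of $\clV_0$. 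The point is that $\clU_0$ is tuned in $F\restr C$: if $a\in U$, $b\in V$ with $U,V\in\clU_0$ and $aR_\Di b$, then tunedness of $\clU$ in $F$ gives for each $a'\in U$ some $b'\in V$ with $a'R_\Di b'$, and since $V\subseteq C$ this $b'$ already lies in $C$, so the edge survives in the restriction. Thus $\clust{\clF}$ is uniformly tunable (with bound $k\mapsto f(k+1)$) and $\Log(\clust{\clF})$ is locally tabular.

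The converse is the substantial direction. Assuming $\h{\clF}\le h<\omega$ and $\clust{\clF}$ uniformly $g$-tunable, I must produce for each $F\in\clF$ and each finite partition $\clV$ of its domain a tuned refinement whose block count is bounded by a function of $|\clV|$ alone — crucially independent of $F$ and of how many clusters it has. I would grade points by \emph{level}: $\mathrm{lev}(x)$ is the height of the cone $R_\frF^*(x)$, so $1\le\mathrm{lev}(x)\le h$, and $xR_\Di y$ forces $\mathrm{lev}(y)\le\mathrm{lev}(x)$, strictly so exactly when $y$ leaves the cluster $C$ of $x$. Processing levels top-down, I assign each point a \emph{type}, defined by recursion on the level, recording: its $\clV$-block; for each $\Di$ the \emph{set} of types of its $R_\Di$-successors lying outside $C$ (these sit at strictly lower levels, hence are already typed); and the isomorphism type of the tuned quotient of $F\restr C$ obtained by feeding the partition of $C$ by $\clV$-block-plus-external-data into $g$-tunability, with its blocks labelled by that same data (via Proposition~\ref{prop:tuned-in-forms}), together with the position of $[x]$ in it. The output partition is the partition of the domain into types.

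The two things to verify are boundedness and tunedness. For \emph{boundedness}, at level $\ell$ the external data ranges over at most $2^{|\Al|\,|T_{\ell-1}|}$ patterns, so the partition of each cluster fed into $g$ has at most $|\clV|\cdot2^{|\Al|\,|T_{\ell-1}|}$ blocks, and since there are only finitely many labelled cluster-quotients of the resulting bounded size, the number $|T_\ell|$ of level-$\ell$ types is bounded in terms of $|T_{\ell-1}|$, $|\clV|$, $|\Al|$, $g$; as $\ell$ runs only through $1,\dots,h$ this recursion closes and the total is a function of $|\clV|$ with $h,|\Al|,g$ fixed. For \emph{tunedness}, an edge from a type-block $U$ into a type-block $V$ via $\Di$ is either external — then $V$ lies at a strictly lower level and membership of $V$'s type in the external $\Di$-set is part of $U$'s type, so every point of $U$ has such a successor — or internal, in which case it is witnessed inside the cluster quotient, which is isomorphic for all points of $U$; either way every point of $U$ has a $\Di$-successor in $V$. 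Refinement of $\clV$ is built into the type. The main obstacle is exactly the uniform block bound: since a frame may have unboundedly many clusters one cannot tune them separately and take a union, so blocks must be shared across clusters by isomorphism type, and it is finite height — bounding the depth of the type recursion by $h$ — that keeps the number of types finite and controlled.
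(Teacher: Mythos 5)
Your proposal is correct and takes essentially the same approach as the paper: the paper does not prove Theorem~\ref{thm:clust-crit} in-house but cites \cite{LocalTab16AiML} and \cite{ShapSl2024}, describing the argument as reducing local tabularity to uniform tunability (Corollary~\ref{cor:tuned-for-frames}) and then building tuned partitions of a finite-height frame starting from tuned partitions of its clusters, which is exactly your level-by-level type construction. Your necessity half also matches the paper's ingredients, namely Theorem~\ref{thm:1-finite-to-m-h} for finite height together with restricting tuned partitions to clusters.
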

\improve{
\IS{Mention or not?}
\begin{corollary}
A logic $L$ is locally tabular iff $L$ is the logic of a class $\clF$ such that
the height of $\clF$ is finite and $\Log\clust{\clF}$ is locally tabular.
\end{corollary}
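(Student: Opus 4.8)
The plan is to derive this corollary directly from Theorem \ref{thm:clust-crit} together with the Kripke completeness of locally tabular logics; essentially the corollary is a repackaging of the biconditional in that theorem, with the existential quantifier over relational classes supplied by completeness.

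For the direction from right to left, I would argue as follows. Suppose $L=\Log(\clF)$ for a class $\clF$ of Kripke frames whose height is finite and for which $\Log\clust{\clF}$ is locally tabular. Then the hypotheses of Theorem \ref{thm:clust-crit} hold for $\clF$, and its conclusion yields at once that $\Log(\clF)=L$ is locally tabular. No further work is needed in this direction: it is an immediate instance of the theorem.

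For the direction from left to right, suppose $L$ is locally tabular. First I would invoke the fact recorded earlier that local tabularity implies the finite model property, and hence Kripke completeness. Consequently there is a class $\clF$ of Kripke frames with $L=\Log(\clF)$; concretely one may take $\clF$ to be the class of all Kripke frames validating $L$. Now I apply the ``only if'' part of Theorem \ref{thm:clust-crit} to this very $\clF$: since $\Log(\clF)=L$ is locally tabular, the theorem guarantees that the height of $\clF$ is finite and that $\Log\clust{\clF}$ is locally tabular. This exhibits a class $\clF$ with all the required properties, completing the argument.

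There is no genuine obstacle here, but the one point worth stating explicitly is that the forward direction really does rely on Kripke completeness: without it one could not be sure that $L$ is the logic of \emph{any} class of Kripke frames, and the relational notions of finite height and of $\clust{\clF}$ would have nothing to act on. The backward direction, by contrast, is purely a substitution into Theorem \ref{thm:clust-crit} and uses nothing beyond it.
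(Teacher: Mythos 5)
Your proof is correct and is essentially the argument the paper intends: the backward direction is an immediate instance of Theorem \ref{thm:clust-crit}, and the forward direction combines the Kripke completeness of locally tabular logics (via the finite model property) with the ``only if'' half of that theorem, exactly as the paper does for the analogous Corollary \ref{cor:tuned-for-frames}. Nothing is missing.
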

}
\noindent
In \cite{LocalTab16AiML}, this theorem is stated for the unimodal case. The proof is based on the construction of
tuned partitions in a frame of finite height, starting from tuned partitions in its clusters.
The polymodal version of this theorem is based on the same idea, details are given in \cite{ShapSl2024}. \improve{The idea of the proof is the following:....}

For a  frame $\frF=(X,(R_\Di)_{\Di\in \AlA})$ and $\AlB\subseteq \AlA$,
let $\fragm{F}{\AlB}=(X,(R_\Di)_{\Di\in \AlB})$.

For a logic $L$, let $F_L$ be the representation Kripke frame of $\Lind{L}{\omega}$, the {\em canonical frame of $L$}.
Recall that a logic $L$ is {\em canonical}, if it is valid in $F_L$.
In \cite[Theorem 6.4]{Seg_Essay}, it is proved that if a unimodal transitive logic $L$ contains a formula $B_h$ of finite height, then
$\h{F_L}\leq h$. It is not difficult to
 generalize this proof to obtain the following proposition:
 \begin{proposition}\label{prop:BhCanon}
  If $L$ is a logic over $\Al$, $\AlB\subseteq \AlA$, and $\h{\fragm{L}{\AlB}}=h<\omega$. Then
$\h{\fragm{F_L}{\AlB}}=h$.
 \end{proposition}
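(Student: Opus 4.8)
The plan is to realize $\fragm{F_L}{\AlB}$ as a canonical frame in its own right and then to reduce to the transitive case through the pretransitive ``master'' modality. First I would record that, since $A:=\Lind{L}{\omega}$ is the free $\omega$-generated $L$-algebra (so $\Log(A)=L$ by Proposition \ref{prop:lind}), the frame $F_L$ is the ultrafilter frame $(\Ult A,(R_\Di)_{\Di\in\Al})$, and hence its $\AlB$-reduct $\fragm{F_L}{\AlB}$ is precisely the ultrafilter frame of the reduct algebra $A^{\restr\AlB}$: the underlying Boolean algebra and the relations $R_\Di$, $\Di\in\AlB$, are unchanged by dropping the operations indexed outside $\AlB$. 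By \eqref{eq:fragm-alphabet} applied to $\clK=\{A\}$ we have $\Log(A^{\restr\AlB})=\fragm{L}{\AlB}$, so $A^{\restr\AlB}$ is an $\fragm{L}{\AlB}$-algebra. Because $h=\h{\fragm{L}{\AlB}}<\omega$ forces $\fragm{L}{\AlB}$ to be pretransitive, I set $m:=\tra{\fragm{L}{\AlB}}$ and write $\Di^*$ for the master modality $\bigvee_{i\le m}\Di_\AlB^{\,i}$, where $\Di_\AlB:=\bigvee_{\Di\in\AlB}\Di$ (so $B_h^*$ is $B_h$ with $\Di$ replaced by $\Di^*$); then both $\atr_\AlB(m)$ and $B_h^*$ lie in $\fragm{L}{\AlB}$ and are therefore valid in $A^{\restr\AlB}$.

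Next I would transfer these validities from the algebra to its ultrafilter frame $\fragm{F_L}{\AlB}$. The formula $\atr_\AlB(m)$ is Sahlqvist, hence canonical, so its validity in $A^{\restr\AlB}$ yields $\fragm{F_L}{\AlB}\mo\atr_\AlB(m)$; by \eqref{pretr:sem} the relation $R:=\bigcup_{\Di\in\AlB}R_\Di$ on $\fragm{F_L}{\AlB}$ is $m$-transitive. Consequently $R^{\le m}=R^*$ is a preorder, $\Di^*$ is interpreted in any model on $\fragm{F_L}{\AlB}$ by $R^*$, and $\h{\fragm{F_L}{\AlB}}=\h(\Ult A,R^*)$.

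It then remains to bound the height of the preorder $(\Ult A,R^*)$ by $h$, and here I would invoke the transitive case directly. Consider the fragment of $L$ given by the single compound modality $\Di^*$ (i.e.\ $\Psi=\{\Di^*p\}$): it is a transitive unimodal logic, and $B_h^*\in\fragm{L}{\AlB}$ says exactly that it contains $B_h$. By Proposition \ref{prop:fragm-compound} the reduct $C:=A^{\restr\Psi}$ satisfies $\Log(C)=\fragm{L}{\Psi}$, so $C\mo B_h$. Segerberg's theorem that $B_h$ is canonical for transitive logics (\cite{Seg_Essay}; cf.\ Theorem \ref{thm:thm-seg-maks}) now gives that the ultrafilter frame of $C$ validates $B_h$, and hence has height at most $h$ by \eqref{eq:fin-ht-trans}. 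Since $\Ult C=\Ult A$, identifying this frame with $(\Ult A,R^*)$ yields $\h(\Ult A,R^*)\le h$ and completes the argument.

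The main obstacle is exactly this last identification: on the canonical frame the accessibility relation attached to the compound modality $\Di^*$ is \emph{a priori} only the canonical relation $R_{\Di^*}$ defined from $\Di^*$, and one must check that it coincides with the closure $R^*=R^{\le m}$ computed from the $\AlB$-relations. This is where the $m$-transitivity established in the second step is essential: it is the standard companion fact for pretransitive logics that, once the frame is $m$-transitive, $R_{\Di^*}=R^{\le m}=R^*$, so that $\Di^*$ genuinely behaves as the reflexive--transitive-closure modality and the transitive theory of $B_h$ applies verbatim. Making this coincidence precise — and thereby certifying that Segerberg's canonicity argument generalizes without change — is the one step requiring care; everything else is bookkeeping with reducts and the frame characterizations \eqref{pretr:sem}, \eqref{eq:heigth-m}, and \eqref{eq:fin-ht-trans}.
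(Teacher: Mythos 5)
Your proposal is correct in substance, but it takes a different route from the one the paper intends: the paper offers no detailed proof and simply asks the reader to generalize Segerberg's canonical-frame argument directly, i.e.\ to re-run the height-bounding argument for $B_h$ on $F_L$ itself with $\Di^*$ in place of $\Di$ and $R^{\leq m}=(\bigcup_{\Di\in\AlB}R_\Di)^{\leq m}$ in place of the transitive relation, whereas you reduce to the transitive case as a black box, passing to the reducts $A^{\restr \AlB}$ and $C=A^{\restr\Psi}$ of the free algebra $A=\Lind{L}{\omega}$ and invoking canonicity twice. Two clarifications are needed to make your reduction airtight. First, the fact you invoke in the last step is strictly stronger than the statement of Segerberg's theorem quoted in the paper, which concerns canonical frames of \emph{logics}, i.e.\ ultrafilter frames of free algebras: your $C$ is a reduct of a free $L$-algebra and need not be free as a unimodal algebra, so you need the algebra-level form of canonicity (validity passes from any algebra validating the formulas to its ultrafilter frame). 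This stronger form does hold — $\atr_\AlB(m)$, the transitivity formula $\Di^*\Di^* p\imp \Di^* p$, and $B_h$ are all Sahlqvist (for $B_h$, rewrite $\neg B_h$ after substituting negated variables to exhibit a Sahlqvist antecedent) — but it should be stated as such rather than attributed to Theorem \ref{thm:thm-seg-maks}, which asserts local tabularity, not canonicity. Second, your ``main obstacle'' $R_{\Di^*}=R^{\leq m}$ does not in fact require the $m$-transitivity established in your second step: on the ultrafilter frame of any modal algebra, the canonical relation of a disjunction of compound modalities is the union of their canonical relations (by primeness of ultrafilters), and the canonical relation of a composition is the composition of the canonical relations (by the standard existence-lemma argument), so $R_{\Di^*}=R^{\leq m}$ holds unconditionally; $m$-transitivity is needed only for the further equality $R^{\leq m}=R^*$. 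With these two points made explicit, your argument is complete; what it buys over the paper's intended route is that none of Segerberg's combinatorial argument has to be repeated, at the price of importing Sahlqvist-type canonicity as an external ingredient.
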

 \improve{better ref for Seg}

Let $\clF$ be the class of all frames of a logic $L$. We say that $L$ has the {\em ripe cluster property},  if $\clust{\clF}$ is uniformly tunable, or equivalently, $\Log{\clust{\clF}}$ is locally tabular.

The following generalizes \cite[Theorem 5.13]{LocalTab16AiML} for the polymodal case.

\begin{theorem}\label{thm:FinH-crit-canon}
Suppose $\vL_0$ is a canonical pretransitive logic with the ripe cluster property.
Then for every extension $\vL$ of $\vL_0$, we have:
\begin{center}
$\vL$ is locally tabular iff $\vL$ is of finite height.
\end{center}
\end{theorem}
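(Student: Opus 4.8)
The statement splits into two implications, and the forward one is essentially free: if $\vL$ is locally tabular then it is in particular $1$-finite, so $\h{\vL}<\omega$ by Theorem \ref{thm:1-finite-to-m-h}. This direction uses nothing about $\vL_0$. All the work is in the converse, ``finite height $\Rightarrow$ locally tabular'', and the plan is to pass to the canonical frame $F_\vL$ and reduce to the cluster criterion. The first step is to control its height: since $\h{\vL}<\omega$, I would apply Proposition \ref{prop:BhCanon} with $\AlB=\AlA$ (so that $\fragm{\vL}{\AlA}=\vL$ and $\fragm{F_\vL}{\AlA}=F_\vL$) to obtain $\h{F_\vL}\le \h{\vL}<\omega$.

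The second step, where canonicity of $\vL_0$ enters, is to show $F_\vL\mo\vL_0$. Because $\vL\supseteq\vL_0$, the Lindenbaum algebra $\Lind{\vL}{\omega}$ is a homomorphic image (quotient) of $\Lind{\vL_0}{\omega}$; dually, by the standard duality between quotients of modal algebras and generated subframes of their ultrafilter frames, $F_\vL$ is isomorphic to a generated subframe of $F_{\vL_0}$. Since $\vL_0$ is canonical we have $F_{\vL_0}\mo\vL_0$, and validity transfers downward to generated subframes, so $F_\vL\mo\vL_0$. Hence $F_\vL$ belongs to the class $\clF$ of all Kripke frames of $\vL_0$.

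Now the ripe cluster property finishes the semantic part. Every cluster-restriction of $F_\vL$ occurs in $\clust{\clF}$, which is uniformly tunable by hypothesis; since uniform tunability passes trivially to subclasses, $\clust{\{F_\vL\}}$ is uniformly tunable, i.e.\ $\Log(\clust{\{F_\vL\}})$ is locally tabular by Corollary \ref{cor:tuned-for-frames}. Applying the cluster criterion (Theorem \ref{thm:clust-crit}) to the single frame $F_\vL$ — whose height is finite and whose clusters have a locally tabular logic — yields that $\Log(F_\vL)$ is locally tabular.

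The last step transfers local tabularity up to $\vL$. By the J\'onsson--Tarski representation, $\Lind{\vL}{\omega}$ embeds as a subalgebra of $\Alg(F_\vL)$, and validity of identities is inherited by subalgebras, so $\Log(F_\vL)=\Log(\Alg(F_\vL))\subseteq \Log(\Lind{\vL}{\omega})=\vL$, the last equality by Proposition \ref{prop:lind}. Thus $\vL$ is an extension of the locally tabular logic $\Log(F_\vL)$, and by Proposition \ref{prop:LT-basic-equivs} every extension of a locally tabular logic is locally tabular, so $\vL$ is locally tabular. I expect the main obstacle to be the second step: canonicity only guarantees validity of $\vL_0$ in its \emph{own} canonical frame, so the transfer to the canonical frame of the larger logic $\vL$ must be justified carefully via the quotient map $\Lind{\vL_0}{\omega}\toto\Lind{\vL}{\omega}$ and the induced embedding of $F_\vL$ as a generated subframe of $F_{\vL_0}$. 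Everything else — the height bound, inheritance of uniform tunability, and the upward transfer of local tabularity — is routine once this is secured.
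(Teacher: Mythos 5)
Your proof is correct and follows essentially the same route as the paper: finite height of the canonical frame via Proposition \ref{prop:BhCanon}, validity of $\vL_0$ there via canonicity (transferred to the extension through the algebraic quotient / generated-subframe duality), the ripe cluster property combined with the cluster criterion (Theorem \ref{thm:clust-crit}), and upward transfer of local tabularity to $\vL$. The only cosmetic difference is that the paper packages the argument through the intermediate logic $\vL' = \vL_0 + B_h^*$ (shown canonical, hence Kripke complete, hence locally tabular by the cluster criterion), whereas you apply the cluster criterion directly to the single frame $F_\vL$; the facts invoked, and in particular the canonicity-transfer step you single out as the main obstacle, are identical in both versions.
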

\begin{proof}
The `only if' direction follows from Theorem \ref{thm:1-finite-to-m-h}.

\smallskip

Let $h=\h{L}<\omega$.
Then $L$ contains the formula $B_h^*$. Let $L'$ be the smallest logic that contains $L_0$ and $B_h^*$.
By Proposition \ref{prop:BhCanon}, $\vL'$ is canonical. Hence, $\vL'$ is Kripke complete; since it extends $L_0$,  it has the ripe cluster property. So  $\vL'$ is locally tabular by Theorem \ref{thm:clust-crit}.

Clearly, $L$ contains  $L'$, so $L$ is locally tabular too.
\end{proof}

\begin{corollary}
    Every canonical pretransitive logic with the ripe cluster property
  admits   the finite height criterion.
\end{corollary}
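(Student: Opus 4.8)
The plan is to obtain the corollary as a direct repackaging of Theorem~\ref{thm:FinH-crit-canon} into the language of Definition~\ref{def:admFinH-polymodal}, using the general necessary condition recorded in Corollary~\ref{cor:finHt-nec}. Fix a canonical pretransitive logic $\vL_0$ over $\Al$ with the ripe cluster property, and let $\vL$ be an arbitrary extension of $\vL_0$. Unfolding Definition~\ref{def:admFinH-polymodal}, I must establish the equivalence: $\vL$ is locally tabular iff $\h{\vL^{\restr \AlB}}<\omega$ for every $\AlB\subseteq\Al$. Since Theorem~\ref{thm:FinH-crit-canon} already supplies a companion equivalence, namely that $\vL$ is locally tabular iff $\vL$ is of finite height (i.e. $\h{\vL}<\omega$), the whole task reduces to reconciling the single condition $\h{\vL}<\omega$ with the family of conditions indexed by $\AlB\subseteq\Al$.

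For the direction from local tabularity to finiteness of all fragment heights, I would observe that a locally tabular logic is in particular $1$-finite, and then invoke Corollary~\ref{cor:finHt-nec}: $1$-finiteness of $\vL$ over $\Al$ forces $\h{\vL^{\restr \AlB}}<\omega$ for every $\AlB\subseteq\Al$ simultaneously. Equivalently, one routes through Proposition~\ref{prop:k-fin-for-fragm} to get $1$-finiteness of each reduct $\vL^{\restr \AlB}$, and then applies Theorem~\ref{thm:1-finite-to-m-h} to each. This direction uses neither canonicity nor the ripe cluster property; it is the general necessary half that holds for any logic.

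For the converse direction, the key observation is that the hypothesis ranges over \emph{all} $\AlB\subseteq\Al$ and in particular covers $\AlB=\Al$. Since $\vL^{\restr \Al}=\vL$, finiteness of $\h{\vL^{\restr \Al}}$ is exactly $\h{\vL}<\omega$, so $\vL$ is of finite height; Theorem~\ref{thm:FinH-crit-canon} then yields that $\vL$ is locally tabular, and here is precisely where the hypotheses on $\vL_0$ (canonicity, pretransitivity, ripe clusters) are consumed. Combining the two directions gives the required equivalence for every extension $\vL$ of $\vL_0$, which is the statement that $\vL_0$ admits the finite height criterion.

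I do not expect a serious obstacle, since the corollary is a formal consequence of the preceding theorem. The only point needing care is to notice that no genuine comparison between the heights of distinct fragments $\vL^{\restr \AlB}$ is required: one should resist trying to prove $\h{\vL}<\omega \Rightarrow \h{\vL^{\restr \AlB}}<\omega$ directly, because height is not monotone under passing to reducts (dropping modalities can split a cluster and thereby \emph{increase} the height, e.g. a two-point symmetric cluster of height $1$ becomes a two-element chain of height $2$). Instead, the forward direction is channelled entirely through local tabularity via Corollary~\ref{cor:finHt-nec}, while the backward direction only ever invokes the top fragment $\AlB=\Al$.
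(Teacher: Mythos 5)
Your proposal is correct and matches the paper's intended argument: the paper states this corollary as an immediate consequence of Theorem~\ref{thm:FinH-crit-canon}, with exactly the reconciliation you describe — the forward direction via Corollary~\ref{cor:finHt-nec} (local tabularity gives finite height of all fragments), and the backward direction by specializing the fragment condition to $\AlB=\Al$ and invoking the theorem. Your closing caveat about non-monotonicity of height under reducts is also consonant with the paper's own remark that these logics in fact satisfy a simpler criterion referring only to the height of the full logic rather than its fragments.
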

\begin{remark}
  In fact, such logics admit some simpler version of the finite height criterion, since we only require the
finite height of a logic, and do not consider its fragments.
\end{remark}
Due to Theorem \ref{thm:fht-imp-1-fin}, we have:
\begin{corollary}
  Every canonical pretransitive logic with the ripe cluster property
  admits the 1-finiteness condition.
\end{corollary}

\improve{

\begin{corollary}
 \IS{Admits  FHC; Not corollary? Look at fragments! Universal; Difference;}
\end{corollary}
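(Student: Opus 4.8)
The plan is to derive the statement by composing the first corollary to Theorem~\ref{thm:FinH-crit-canon} with the two transfer equivalences \eqref{eq:finh-U-adm} and \eqref{eq:finh-Diff} established in Examples~\ref{eq:universalExpLT} and \ref{eq:diffExpLT}. No new semantic construction is needed: the argument is essentially a chain of implications, with all the real work already packaged into those earlier results.

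First I would invoke the first corollary to Theorem~\ref{thm:FinH-crit-canon}: since $L$ is a canonical pretransitive logic with the ripe cluster property, $L$ itself admits the finite height criterion. This is the only step in which canonicity, pretransitivity, and the ripe cluster property are used; once the finite height criterion is secured for the base logic $L$, the expansions are handled purely by the transfer results, which are stated for an arbitrary base logic and so do not require $\un{L}$ or $\diff{L}$ to be canonical or pretransitive in their own right. Concretely, applying \eqref{eq:finh-U-adm} — which asserts that $\un{L}$ admits the finite height criterion iff $L$ does — I conclude that $\un{L}$ admits the finite height criterion, and the identical reasoning with \eqref{eq:finh-Diff} in place of \eqref{eq:finh-U-adm} yields the same conclusion for $\diff{L}$.

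The one point that deserves care, rather than a genuine obstacle, is that the finite height criterion for these polymodal logics is the one of Definition~\ref{def:admFinH-polymodal}, which quantifies over \emph{all} sub-alphabet fragments of an extension, including those fragments that involve the freshly added universal or difference modality. I would therefore stress that the two transfer equivalences already absorb this bookkeeping: they rest on the fact that every partition is tuned with respect to the universal relation and with respect to $R_{\neq}$ (so the new modality contributes only fragments of height~$1$ and does not disturb the finite-height behavior of the $\Al$-fragments), as exploited in Examples~\ref{eq:universalExpLT} and \ref{eq:diffExpLT}. Since this verification is internal to \eqref{eq:finh-U-adm} and \eqref{eq:finh-Diff}, the proof reduces to the cited composition. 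The value of the statement is that it supplies a broad family of polymodal logics admitting the finite height criterion; moreover, because each transfer equivalence takes an arbitrary base logic admitting the criterion to another such logic, the same composition shows that iterated expansions such as $\diff{(\un{L})}$ admit the criterion as well.
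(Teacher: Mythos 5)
The paper never actually proves this statement: the ``corollary'' is an empty placeholder (it sits inside an \verb|\improve| block, which the preamble redefines to expand to nothing), carrying only the author's note ``Admits FHC; Not corollary? Look at fragments! Universal; Difference;''. So there is no proof of record to compare against, and your proposal supplies exactly the derivation the surrounding text supports: the corollary to Theorem~\ref{thm:FinH-crit-canon} gives that the base logic $L$ admits the finite height criterion, and the transfer equivalences \eqref{eq:finh-U-adm} and \eqref{eq:finh-Diff}, which the paper asserts for an arbitrary base logic, carry this over to $\un{L}$ and $\diff{L}$. Your composition is correct, and you correctly sense that a \emph{direct} application of Theorem~\ref{thm:FinH-crit-canon} to the expansions would fail --- which is plainly the author's worry behind ``Not corollary? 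Look at fragments!''. One small imprecision: the hypotheses that fail for the expansions are not canonicity or pretransitivity (both in fact hold: the added axioms are Sahlqvist, and the universal, respectively difference, relation makes the expansions $1$-transitive); what fails is the \emph{ripe cluster property}, since $R_{\neq}\cup Id_X = X\times X$ (resp.\ the universal relation) turns every expanded frame into a single cluster, so $\clust{\clF}$ is essentially $\clF$ itself and the ripe cluster property would amount to local tabularity of the expansion outright. Your fragment-based route, verifying Definition~\ref{def:admFinH-polymodal} over all sub-alphabets and letting the tuned-partition facts for the universal and difference relations absorb the fragments containing the new modality, is the right way around this obstacle, and the observation that the construction iterates (e.g.\ to $\diff{(\un{L})}$) is a sound bonus.
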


\begin{remark}
  Say that the condition is stronger (weaker)
  \IS{Stronger}
\end{remark}

}




\subsection{Finite height criterion via lexicographic sum}\label{sec:lex}
In this section, we consider the operation of lexicographic sum of Kripke frames and of logics.
Unlike many other operations on logics,  lexicographic sum
preserves local tabularity \cite{LTViaSums}. We use it to identify new polymodal logics that admit the finite height criterion.

\def\OmA{\Al_\mathrm{v}}
\def\OmB{\Al_\mathrm{h}}


\improve{
where a family of frames-summands is  indexed by elements of another frame, and the corresponding operation on logics.wording}

Throughout this section, we assume that $\Al_1$ and $\Al_2$ are two disjoint finite sets,
and $L_1$ and $L_2$ are logics over
$\Al_1$ and $\Al_2$, respectively.
Elements of $\Al_1$ and $\Al_2$ are called {\em vertical} and {\em horizontal modalities}, respectively.


\begin{definition}\label{def:sumLex}
Let $\frI=(Y,(S_\Di)_{\Di\in \Al_1})$ be a frame, and let $(\frF_i)_{i\in Y}$ be a family of frames such that
  $\frF_i=(X_i,(R_{i,\Di})_{\Di\in \Al_2})$.
The {\em lexicographic sum} $\LSuml{\frI}{\frF_i}$ is the $(\Al_1\cup\Al_2)$-frame
$\left(\bigsqcup_{i \in Y} X_i, (S^\oplus_\Di)_{\Di\in\Al_1}, (R_\Di)_{\Di\in\Al_2}\right)$,  where
$\bigsqcup_{i\in Y}{X_i}=\bigcup_{i\in Y}(\{i\}\times X_i)$, and:
\begin{eqnarray*}
\text{for }\Di\in\Al_1,&& (i,a) S^\oplus_\Di  (j,b) \text{ iff }   i S j;\\
\text{for }\Di\in\Al_2,&& (i,a) R_\Di  (j,b)  \text{ iff }    i = j \;\&\;a R_{i,\Di}  b.
\end{eqnarray*}
For a class  $\clI$ of $\Al_1$-frames and a class  $\clF$ of $\Al_2$-frames, $\LSuml{\clI}{\clF}$ denotes
 the class of all sums
$\LSuml{\frI}{\frF_i}$, where $\frI \in \clI$ and all $\frF_i$ are in $\clF$.
For modal logics $L_1$ and $L_2$,
let $\LSuml{L_1}{L_2}$ be the logic of the class
$\LSuml{\clF_1}{\clF_2}$, where $\clF_i$ is the class of all frames of the logic $L_i$.
\end{definition}

\begin{remark}
In we additionally require that all summands are equal, then this operation results in the {\em lexicographic
product of logics} introduced in \cite{Balb2009}.
\end{remark}

\begin{theorem}\cite{LTViaSums}\label{thm:localfin-lex-semantically}
If the logics $L_1$ and $L_2$ are locally tabular, then
the logic $\LSuml{L_1}{L_2}$  is locally tabular as well.
\end{theorem}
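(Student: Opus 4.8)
The plan is to reduce the statement to \emph{uniform tunability} and then build, from tuned partitions of the summands and of the index frame, a uniformly bounded tuned partition of the sum. Since $\LSuml{L_1}{L_2}$ is by definition $\Log(\LSuml{\clF_1}{\clF_2})$, by Corollary~\ref{cor:tuned-for-frames}(1) it suffices to show that the class $\LSuml{\clF_1}{\clF_2}$ of all lexicographic sums is uniformly tunable. Local tabularity of $L_1$ and $L_2$ gives their Kripke completeness, so $L_i=\Log(\clF_i)$; hence, again by Corollary~\ref{cor:tuned-for-frames}(1), each $\clF_i$ is uniformly tunable, say $\clF_1$ is $f_1$-tunable and $\clF_2$ is $f_2$-tunable. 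I will produce a single $f$ witnessing $f$-tunability of $\LSuml{\clF_1}{\clF_2}$.

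Fix a sum $F=\LSuml{\frI}{\frF_i}$ with $\frI=(Y,(S_\Di)_{\Di\in\AlA})\in\clF_1$ and $\frF_i=(X_i,(R_{i,\Di})_{\Di\in\AlB})\in\clF_2$, and a finite partition $\clV$ of its domain $\bigsqcup_{i\in Y}X_i$ with $|\clV|=n$. First I would tune inside each summand: for each $i$, the trace $\clV{\restr}X_i$ is a partition of $X_i$ into at most $n$ blocks, so $f_2$-tunability yields an $R_{i,\Di}$-tuned (for all $\Di\in\AlB$) refinement $\clU_i$ of it with $|\clU_i|\le f_2(n)=:K$. The crucial observation for the horizontal modalities is that, because each $\clU_i$ is tuned, within a single summand the block of $\clU_i$ containing a point completely determines which blocks it reaches by each $R_{i,\Di}$. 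I would therefore assign to each index $i$ a \emph{type}: two indices $i,i'$ have the same type when there is a bijection between the blocks of $\clU_i$ and of $\clU_{i'}$ respecting the $\clV$-block of each block and, for every $\Di\in\AlB$, the reachability pattern $\{(C,C')\mid\text{some (hence every) point of }C\text{ is }R_{i,\Di}\text{-related to a point of }C'\}$. Since each $\clU_i$ has at most $K$ blocks there are only finitely many types, say at most $g(n)$ of them; fixing for each type a labeling of its blocks by abstract colors $\{1,\dots,K\}$ makes ``color $c$'' meaningful across all summands of that type. Let $\clT$ be the partition of $Y$ into type-classes.

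Next I would tune the index frame: by $f_1$-tunability, refine $\clT$ to a partition $\clW$ of $Y$ that is $S_\Di$-tuned for all $\Di\in\AlA$, with $|\clW|\le f_1(g(n))$; each $\clW$-block lies inside one type-class, so the type is constant on it. The output partition $\clU$ of $F$ is then defined by grouping $(i,a)$ according to the pair (the $\clW$-block of $i$, the abstract color of $a$ in $\clU_i$); it refines $\clV$, and $|\clU|\le |\clW|\cdot K\le f_1(g(n))\cdot f_2(n)=:f(n)$, a bound independent of $F$. It remains to verify that $\clU$ is tuned. For a horizontal $\Di\in\AlB$ the relation stays inside a summand, and whether a point of color $c$ reaches color $c'$ is dictated by the reachability pattern, which is constant across all summands of a $\clW$-block; since each $\clU_i$ is $R_{i,\Di}$-tuned this is moreover uniform over points of color $c$, so the tuning condition \eqref{eq:part} holds for $\Di$. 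For a vertical $\Di\in\AlA$, whether $(i,a)$ is $S^\oplus_\Di$-related to a target block depends only on $i$ and on which indices of the target $\clW$-block carry the target color; since the type (and hence ``color present'') is constant on each $\clW$-block and $\clW$ is $S_\Di$-tuned, this reachability is uniform over all $(i,a)$ in a block, giving \eqref{eq:part} for $\Di$.

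The main obstacle is precisely the interaction of the two layers in the horizontal check: a block of $\clU$ necessarily spreads over infinitely many summands (otherwise $\clU$ could not be finite), so horizontal tuning cannot be arranged summand-by-summand and must instead be forced by making the summand-type record the full color-to-color reachability pattern and by re-tuning the index partition \emph{after} this refinement. Getting the order of operations right --- local tuning, then typing, then index tuning --- and checking that the final refinement by color does not destroy the vertical tuning already secured by $\clW$ (it does not, because vertical reachability is insensitive to the local color) is the heart of the argument; the cardinality bookkeeping then yields the uniform bound $f$, and Corollary~\ref{cor:tuned-for-frames}(1) finishes the proof.
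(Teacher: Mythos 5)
Your proof is correct, and it follows essentially the approach of the source this theorem is quoted from: the paper defers to \cite{LTViaSums}, where the argument likewise runs through the tuned-partition characterization of local tabularity (Corollary \ref{cor:tuned-for-frames}), building a uniformly bounded tuned partition of each lexicographic sum from tuned partitions of the summands, grouped into finitely many ``types,'' together with a tuned refinement of the induced type-partition of the index frame. Your ordering of the steps (local tuning, then typing, then index tuning) and the bound $f_1(g(n))\cdot f_2(n)$ match that construction.
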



While in general a complete axiomatization of $\LSuml{L_1}{L_2}$ is unknown, 
in many cases it can be obtained in the following way.
\begin{definition}
Let
$\Phi(\Al_1,\Al_2)$ be the set of all formulas
\begin{equation}\label{eq:alpha-beta-gamma}
\Dih \Div p\to \Div p, \; \Div\Dih p\to \Div p, \;\Div p\to \Boxh \Div p
\end{equation}
with $\Div$ in $\Al_1$ and $\Dih$ in $\Al_2$.
Define $L_1\oplus L_2$ as the smallest logic over $\Al_1\cup\Al_2$ that contains $L_1\cup L_2\cup\Phi(\Al_1,\Al_2)$
\end{definition}
 The formulas $\Phi(\Al_1,\Al_2)$ were considered in \cite{Balb2009} in connection with axiomatization problems
of lexicographic products,
  and also in \cite{Bekl-Jap}
in the context of polymodal provability logic.
It is straightforward that formulas $\Phi(\Al_1,\Al_2)$ are valid in any lexicographic sum.
Moreover,
\begin{equation}\label{eq:lexAx}
  \LSuml{L_1}{L_2}=L_1\oplus L_2
\end{equation}
holds for many logics.
For instance, it follows from  \cite{Balb2009} that
$\LSuml{\LK{4}}{\LK{4}}=\LK{4}\oplus \LK{4}$;
\eqref{eq:lexAx}  is also true for the sum $\LSuml{\GL}{\GL}$, where $\GL$ is the   G\"odel-L\"ob logic \cite{Bekl-Jap}.

However, in general the operation $\LSuml{L_1}{L_2}$ does not preserve $L_1$,
that is $L_1$ is not included in $\LSuml{L_1}{L_2}$;
clearly, in this case
\eqref{eq:lexAx} does not hold.
Another obstacle is that
$\LSuml{L_1}{L_2}$ is Kripke complete, which
is not guaranteed for $L_1\oplus L_2$ (to the best of our knowledge, even in the case of Kripke complete $L_1$ and $L_2$).
Nevertheless, it can be shown that \eqref{eq:lexAx} holds for a broad family of sums, where $L_1$ is axiomatizable by universal Horn modal formulas.\improve{closed formulas, PTC formulas etc}

\medskip
 \improve{Lemma 5.9}

The following two facts are given in \cite{LTViaSums}.
\begin{proposition}\label{lem:sum-is-contained-inPsi}
If $L_1\oplus L_2$ is Kripke complete, then $\LSuml{L_1}{L_2}\subseteq L_1\oplus L_2$.
\end{proposition}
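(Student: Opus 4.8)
The plan is to use the Kripke completeness of $L_1\oplus L_2$ to turn the inclusion into a semantic statement about frames, and then to realize every frame of $L_1\oplus L_2$ as a bounded-morphic image of a lexicographic sum. Since $L_1\oplus L_2$ is Kripke complete, it equals $\Log(\clG)$ for $\clG$ the class of its Kripke frames, so it suffices to prove that every $G\in\clG$ validates $\LSuml{L_1}{L_2}$. Validity is determined by point-generated subframes, and these inherit membership in $\clG$ (generated subframes preserve validity); hence I may assume $G$ is rooted.

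First I would read off the frame conditions corresponding to the three axiom schemes in $\Phi(\AlA,\AlB)$: writing $R_v=\bigcup_{\Di\in\AlA}R_\Di$ and $R_h=\bigcup_{\Di\in\AlB}R_\Di$, the formulas $\Dih\Div p\to\Div p$ and $\Div\Dih p\to\Div p$ say $R_h\circ R_v\subseteq R_v$ and $R_v\circ R_h\subseteq R_v$, while $\Div p\to\Boxh\Div p$ says that $x R_\Dih y$ forces $R_\Div(x)\subseteq R_\Div(y)$. The first and the third conditions together give that the vertical successor sets $R_\Div(\cdot)$ are constant along every horizontal edge, hence constant on each connected component of $R_h$ (taken without regard to direction). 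These components are the candidate \emph{blocks}, and each is a generated subframe of the horizontal reduct $G^{\restr\AlB}$.

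Next I would build the lexicographic sum $S=\LSuml{\frI}{\frF_i}$ mapping onto $G$. The index frame $\frI$ would be extracted from the vertical reduct $G^{\restr\AlA}$; it validates $L_1$ because $L_1\subseteq L_1\oplus L_2$ and, by \eqref{eq:fragm-alphabet}, $\AlA$-formulas valid on $G$ are valid on its $\AlA$-reduct. Each summand $\frF_i$ would be the horizontal reduct of the corresponding block; being a generated subframe of $G^{\restr\AlB}$, it validates $L_2\subseteq L_1\oplus L_2$. One then defines the natural collapse $g\colon S\toto G$ and checks the back-and-forth conditions of a bounded morphism, using $R_h\circ R_v\subseteq R_v$ and $R_v\circ R_h\subseteq R_v$ for the horizontal clauses and the component-invariance of $R_\Div$ for the vertical ones. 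Once $g$ is a surjective bounded morphism, $S\in\LSuml{\clF_1}{\clF_2}$ gives $S\models\LSuml{L_1}{L_2}$, and validity transfers along $g$ to yield $G\models\LSuml{L_1}{L_2}$, as required.

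The hard part will be the vertical forth-condition. In a lexicographic sum the vertical relation is \emph{all-or-nothing} per block: $(i,a)$ sees the entire summand over each vertical successor index, whereas in an arbitrary frame of $L_1\oplus L_2$ a vertical arrow may land inside a block without covering it (the conditions $\Phi$ alone do not force this backward closure, as a simple non-rooted three-point example shows). The role of rootedness is precisely to exclude this on the reachable part: propagating the three frame conditions above from the root forces the vertical arrows to respect blocks, which is what lets $g$ be a genuine bounded morphism — possibly after duplicating blocks so that $g$ is required only to be surjective rather than injective. Making this propagation argument precise, and choosing the index frame together with the multiplicities of the blocks so that all four bounded-morphism clauses hold at once, is the technical core; it is carried out in \cite{LTViaSums}.
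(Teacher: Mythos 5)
Your high-level strategy (use Kripke completeness to reduce the inclusion to a statement about frames, then exhibit every frame of $L_1\oplus L_2$ as the image of a lexicographic sum under a surjective p-morphism) is the right one, and your reading of the frame correspondents of $\Phi(\AlA,\AlB)$ is correct. But the claim you lean on to build the morphism is false, and it is exactly the ``technical core'' that you then defer to \cite{LTViaSums}, so the proposal is not a proof. You assert that in a \emph{rooted} frame of $L_1\oplus L_2$ the three conditions propagate from the root so that vertical arrows respect blocks, where blocks are the undirected $R_h$-connected components. Counterexample: $\AlA=\{\Div\}$, $\AlB=\{\Dih\}$, $W=\{r,c,a,b\}$, with vertical relation $S_\Div=\{(r,a),(r,c),(c,a),(c,b)\}$ and horizontal relation $R_\Dih=\{(b,a)\}$. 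This frame is rooted at $r$ and satisfies all three correspondents of $\Phi(\AlA,\AlB)$ (every instance is vacuous except $c\,S_\Div\,b\,R_\Dih\,a \Rightarrow c\,S_\Div\,a$, which holds), yet $r$ sees $a$ vertically without seeing $b$, although $a$ and $b$ lie in the same block. So rootedness does \emph{not} force backward closure of vertical successor sets under $R_h$. Consequently a collapse map whose summands are blocks of $G$ cannot satisfy the vertical forth and back clauses simultaneously: in the example, the back clause at $r$ requires an index edge into a summand containing a preimage of $a$, but every copy of the block $\{a,b\}$ also contains a preimage of $b$, and then the forth clause forces $r\,S_\Div\,b$, which fails. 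Duplicating whole blocks, as you suggest, does not repair this.

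The construction that works uses \emph{forward}-generated horizontal subframes rather than blocks, and then rootedness is not needed at all. Given a Kripke frame $G=(W,(S_\Di)_{\Di\in\AlA},(R_\Di)_{\Di\in\AlB})$ validating $L_1\oplus L_2$, let $R_h=\bigcup_{\Di\in\AlB}R_\Di$, take the index frame $\frI=G^{\restr\AlA}$ (it validates $L_1$ by \eqref{eq:fragm-alphabet}), and for each $w\in W$ let $\frF_w$ be the restriction of $G^{\restr\AlB}$ to $R_h^*(w)$, a generated subframe, hence an $L_2$-frame. Then $f(w,u)=u$ is a surjective p-morphism from $\LSuml{\frI}{\frF_w}$ onto $G$: the horizontal clauses hold because each $R_h^*(w)$ is forward closed; vertical forth holds because $w\,R_h^*\,u$ gives $S_\Di(w)\subseteq S_\Di(u)$ (third axiom, iterated), so $u\,S_\Di\,w'$, and then $w'\,R_h^*\,u'$ gives $u\,S_\Di\,u'$ (second axiom, iterated); vertical back holds by choosing the witness index $w'=z$ itself, since $w\,R_h^*\,u\,S_\Di\,z$ gives $w\,S_\Di\,z$ (first axiom, iterated) and $f(z,z)=z$. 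Hence every frame of $L_1\oplus L_2$ validates $\LSuml{L_1}{L_2}$, and Kripke completeness yields the inclusion. For calibration: the paper itself gives no proof of this proposition either, citing \cite{LTViaSums}; the point is that your sketch, as written, rests on a false lemma and would collapse before reaching that citation.
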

\begin{theorem}\label{thm:sum-of-canon-LT}
If $L_1$ and $L_2$ are canonical locally tabular logics, then $L_1\oplus L_2$ is locally tabular.
\end{theorem}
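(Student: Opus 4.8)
The plan is to derive local tabularity of $L_1\oplus L_2$ from that of the semantic sum $\LSuml{L_1}{L_2}$, with canonicity serving as the bridge. First, since $L_1$ and $L_2$ are locally tabular, Theorem~\ref{thm:localfin-lex-semantically} gives that $\LSuml{L_1}{L_2}$ is locally tabular. By Proposition~\ref{prop:LT-basic-equivs}, every extension of a locally tabular logic is again locally tabular, so it suffices to prove the inclusion $\LSuml{L_1}{L_2}\subseteq L_1\oplus L_2$, i.e. that $L_1\oplus L_2$ is an extension of $\LSuml{L_1}{L_2}$ over the common alphabet $\AlA\cup\AlB$. By Proposition~\ref{lem:sum-is-contained-inPsi}, this inclusion holds as soon as $L_1\oplus L_2$ is Kripke complete. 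Thus the entire argument reduces to establishing Kripke completeness of $L_1\oplus L_2$, and the natural way to obtain it is to show that $L_1\oplus L_2$ is \emph{canonical}.

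To prove canonicity I would work in the canonical frame $F:=F_{L_1\oplus L_2}$, with vertical relations $S^\oplus_\Div$ ($\Div\in\AlA$) and horizontal relations $R_\Dih$ ($\Dih\in\AlB$), and verify that every axiom of $L_1\cup L_2\cup\Phi(\AlA,\AlB)$ is valid on $F$; since frame validity is preserved by the inference rules, this yields $F\mo L_1\oplus L_2$. The connecting formulas $\Phi(\AlA,\AlB)$ are Sahlqvist, hence canonical, so they hold on $F$ and impose there their first-order conditions: $R_\Dih\circ S^\oplus_\Div\subseteq S^\oplus_\Div$, $\;S^\oplus_\Div\circ R_\Dih\subseteq S^\oplus_\Div$, and $S^\oplus_\Div(x)\subseteq S^\oplus_\Div(y)$ whenever $x\,R_\Dih\,y$. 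Together these force each vertical successor set to be constant along horizontal edges, and force the vertical relation to pass from whole horizontal components to whole horizontal components --- precisely the shape of a lexicographic sum. I would then read off from $F$ a decomposition into ``summands'' (the horizontal-reachability components) sitting over a ``vertical skeleton'' $\frI$ induced on the quotient.

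The validity $F\mo L_2$ is the easy half: a formula over $\AlB$ is evaluated entirely inside one horizontal component, so canonicity of $L_2$ applies component-wise. The validity of $L_1$ on the $\AlA$-reduct $F^{\restr\AlA}$ is the delicate half, and I expect it to be the main obstacle. I would handle it algebraically, to avoid a wrong-direction p-morphism: reading canonicity as closure of the relevant variety under canonical extensions, one uses that canonical extension commutes with forgetting operators, so that the $\AlA$-reduct of the canonical extension of an $(L_1\oplus L_2)$-algebra is the canonical extension of its $\AlA$-reduct; canonicity of $L_1$ then keeps the $L_1$-axioms valid, and symmetrically for $L_2$, while the Sahlqvist axioms $\Phi(\AlA,\AlB)$ are preserved automatically. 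Equivalently, in the frame setting the crux is an existence lemma for the back condition of the projection $x\mapsto(\text{set of }\AlA\text{-formulas in }x)$ from $F^{\restr\AlA}$ onto $F_{L_1}$: given a prescribed vertical $L_1$-successor of the $\AlA$-fragment of $x$, one must build a full maximal $(L_1\oplus L_2)$-consistent $S^\oplus_\Div$-successor of $x$ whose $\AlA$-fragment is exactly the prescribed one, choosing the $\AlB$-content freely --- and this is exactly where the interaction axioms $\Phi(\AlA,\AlB)$ are consumed, via a Lindenbaum-style extension. Once canonicity of $L_1\oplus L_2$ is secured, Kripke completeness follows, Proposition~\ref{lem:sum-is-contained-inPsi} gives $\LSuml{L_1}{L_2}\subseteq L_1\oplus L_2$, and the reduction of the first paragraph closes the proof.
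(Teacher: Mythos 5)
Your proposal is correct and takes essentially the same route as the paper: local tabularity of $\LSuml{L_1}{L_2}$ via Theorem~\ref{thm:localfin-lex-semantically}, reduction to the inclusion $\LSuml{L_1}{L_2}\subseteq L_1\oplus L_2$ (which holds once $L_1\oplus L_2$ is Kripke complete, by Proposition~\ref{lem:sum-is-contained-inPsi}), and canonicity of $L_1\oplus L_2$ obtained from canonicity of $L_1$, $L_2$ together with the Sahlqvistness of $\Phi(\AlA,\AlB)$ --- precisely the argument indicated in the paper's remark following the theorem. Your algebraic justification of the canonicity step (canonical extension commutes with taking reducts, so each reduct of the canonical extension of an $(L_1\oplus L_2)$-algebra stays in the respective variety) is a correct and welcome filling-in of a detail the paper leaves implicit.
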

\begin{remark}
The proof of this theorem uses Kripke completeness of $L_1\oplus L_2$.
If $L_1$ and $L_2$ are canonical, then $L_1\oplus L_2$ is canonical as well, since the formulas
$\Phi(\Al_1,\Al_2)$ are Sahlqvist.
However,
there are non-canonical locally tabular logics \cite[Section 6]{Goldblatt1995}.
We do not know if the canonicity can be omitted in Theorem \ref{thm:sum-of-canon-LT}:
does Kripke completeness of $L_1\oplus L_2$ follow from local tabularity of  $L_1$ and $L_2$?
\end{remark}

We use Theorem \ref{thm:sum-of-canon-LT} to describe the following families of logics admitting the finite height criterion.
\later{(this result was announced in \cite{LogCol2023}).
\improve{Do we need it? Is it correct?}}
\begin{theorem}\label{thm:sumAFH}
 Let $L_1$ and $L_2$ be canonical logics.
 Assume that $L_1$ and $L_2$ admit the finite height criterion.
 Then $L_1\oplus L_2$ admits the finite height criterion.
\end{theorem}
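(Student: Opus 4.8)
The plan is to verify both directions of Definition \ref{def:admFinH-polymodal} for $L_1\oplus L_2$, viewed as a logic over $\AlA\cup\AlB$. The forward direction is immediate: a locally tabular extension $\vL$ of $L_1\oplus L_2$ is $1$-finite, so Corollary \ref{cor:finHt-nec} yields $\h{\fragm{\vL}{\AlC}}<\omega$ for all $\AlC\subseteq\AlA\cup\AlB$. For the converse, fix an extension $\vL\supseteq L_1\oplus L_2$ with $\h{\fragm{\vL}{\AlC}}<\omega$ for every $\AlC\subseteq\AlA\cup\AlB$; the goal is to show $\vL$ is locally tabular. Note that each such fragment is then pretransitive (finite height presupposes pretransitivity), so $m_\AlC:=\tra{\fragm{\vL}{\AlC}}$ and $h_\AlC:=\h{\fragm{\vL}{\AlC}}$ are finite.

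The strategy is to sandwich a canonical, locally tabular lexicographic-sum logic beneath $\vL$ and then apply Theorem \ref{thm:sum-of-canon-LT} together with the fact that extensions of locally tabular logics are locally tabular (Proposition \ref{prop:LT-basic-equivs}). First I would define $L_1'$ to be the smallest logic over $\AlA$ obtained from $L_1$ by adjoining the finitely many axioms $\atr_{\AlC}(m_\AlC)$ and $B_{h_\AlC}^{*}$ for $\AlC\subseteq\AlA$ (here $*$ refers to $\Di_\AlC^{\leq m_\AlC}$), and $L_2'$ analogously over $\AlB$. Since $m_\AlC$ and $h_\AlC$ are the actual transitivity index and height of $\fragm{\vL}{\AlC}$, each adjoined axiom already belongs to $\fragm{\vL}{\AlC}\subseteq\vL$, so $L_1\subseteq L_1'\subseteq\vL$ and $L_2\subseteq L_2'\subseteq\vL$. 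By construction $\h{(L_1')^{\restr\AlC}}\leq h_\AlC<\omega$ for every $\AlC\subseteq\AlA$; as $L_1$ admits the finite height criterion and $L_1'$ is an extension of $L_1$, this forces $L_1'$ to be locally tabular, and likewise $L_2'$.

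It then remains to see that $L_1'$ and $L_2'$ are canonical, and this is where I expect the main difficulty. Here $L_1$ is canonical by hypothesis, the pretransitivity axioms $\atr_{\AlC}(m_\AlC)$ are Sahlqvist, and the pretransitive finite height axioms $B_{h_\AlC}^{*}$ are canonical by Proposition \ref{prop:BhCanon}; this is precisely the delicate step already used in the proof of Theorem \ref{thm:FinH-crit-canon} and in the canonicity remark following Theorem \ref{thm:sum-of-canon-LT}, and I would reuse that argument to conclude that adjoining these axioms to the canonical $L_1$ (resp.\ $L_2$) preserves canonicity.

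Finally, with $L_1',L_2'$ canonical and locally tabular, Theorem \ref{thm:sum-of-canon-LT} gives that $L_1'\oplus L_2'$ is locally tabular. Because $L_1'\subseteq\vL$, $L_2'\subseteq\vL$, and $\Phi(\AlA,\AlB)\subseteq L_1\oplus L_2\subseteq\vL$, the logic $\vL$ contains all generators of $L_1'\oplus L_2'$, whence $L_1'\oplus L_2'\subseteq\vL$. Thus $\vL$ is an extension of a locally tabular logic, so it is locally tabular by Proposition \ref{prop:LT-basic-equivs}, completing the criterion.
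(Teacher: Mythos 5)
Your proof is correct and follows essentially the same route as the paper's: both arguments construct logics $L_1'$, $L_2'$ over $\AlA$ and $\AlB$ by adjoining to $L_1$, $L_2$ the finite-height axioms extracted from the fragments of $\vL$, conclude that $L_1'$, $L_2'$ are locally tabular (via the finite height criterion for $L_1$, $L_2$) and canonical (via Proposition \ref{prop:BhCanon}), then apply Theorem \ref{thm:sum-of-canon-LT} to get local tabularity of $L_1'\oplus L_2'$, and finish by observing that $\vL$ extends $L_1'\oplus L_2'$. Your one deviation --- explicitly adjoining the pretransitivity axioms $\atr_{\AlC}(m_{\AlC})$ alongside the height formulas $B^{*}_{h_{\AlC}}$ --- is a sensible extra precaution rather than a different method, since finite height of the fragments of $L_1'$ presupposes their pretransitivity, a point the paper's proof leaves implicit.
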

\improve{What about $\LSuml{L_1}{L_2}$?}
\begin{proof}\improve{other direction}
Consider an extension $L$ of $L_1\oplus L_2$.

\smallskip
If $L$ is locally tabular, then the height of $\fragm{L}{\Al}$ is finite for each
$\Al\subseteq \Al_1\cup\Al_2$  by Corollary  \ref{cor:finHt-nec}.

\smallskip
Assume that $\h{\fragm{L}{\Al}}<\omega$ for each
$\Al\subseteq \Al_1\cup\Al_2$ and show that $L$ is locally tabular.

Let $\psi_\Al$ denote the corresponding formula of finite height of the logic $\fragm{L}{\Al}$. Now put $\Gamma_1=\{\psi_\Al\mid \Al\subseteq \Al_1\}$,
$\Gamma_2=\{\psi_\Al\mid \Al\subseteq \Al_2\}$. Let $L_1'$ be the smallest logic over $\Al_1$ that
contains $L_1\cup \Gamma_1$, and let $L_2'$ be the smallest logic over $\Al_2$ that
contains $L_2\cup \Gamma_2$.
Since  $L_1$ and $L_2$ admit the finite height criterion, the logics  $L_1'$  and $L_2'$ are locally tabular.
\improve{use notation from prel, or remove it from there}

Since $L_1$ and $L_2$ are canonical,  $L_1'$ and $L_2'$ are canonical as well by Proposition \ref{prop:BhCanon}.
By Theorem \ref{thm:sum-of-canon-LT}, $L_1'\oplus L_2'$ is locally tabular.
\improve{statement; refs}

Clearly, $L$ includes the logics $L_1'$ and $L_2'$; since $L$ extends $L_1\oplus L_2$, it also includes $\Phi(\Al_1,\Al_2)$.
Hence, $L$ is an extension of the locally tabular logic $L_1'\oplus L_2'$,
and so is locally tabular too.
\end{proof}

\section{Logics of finite modal depth}\label{sec:fmd}
In this section we discuss the finite modal depth property of logics, which is at least as strong as local finiteness.
Many results in this directions were obtained by V. Shehtman in \cite{Sheht-MD-16}. In particular, finite modal depth was shown for all locally tabular logics above $\LK{4}$, for the difference logic and other examples of non-transitive logics \cite{Sheht-MD-16}.
\improve{These results are based
on the method of bisimulation games. }

Our main result is Theorem \ref{thm:md:clusters}, an analog of Theorem \ref{thm:clust-crit} for finite modal depth: it shows that
in the case of finite height, the finite modal depth is inherited from clusters. In particular, it
describes a family of logics where finite modal depth is equivalent to finite height, see Corollary \ref{cor:md-cluster-crit}. It also allows  us to show that local tabularity is equivalent to finite modal depth for all 1-transitive unimodal logics,
and obtain new modal depth upper bounds.
\later{Last minute change: general frames became Kripke frames. Old version: v10}

\subsection{Background: finite model depth in models}
We start with the exposition of some facts that follow from \cite{Sheht-MD-16}.

\begin{definition}
For a formula $\vf$, its {\em modal depth $\md{\vf}$} is the maximal number of nested modalities occurring in~$\vf$. Let $L$ be a logic over $\Al$. The {\em $L$-modal depth $\mdL{\vf}$ of $\vf$} is $\min\{\md{\psi}\mid \vf\iff \psi \in L\}$. The {\em modal depth $\md{L}$ of $L$}
is
$\sup\{\mdL{\vf}\mid \vf \text{ is a formula over $\Al$}\}$.
\improve{Check Kracht; $\leq d$}
\end{definition}

\begin{example}
The logic $\LS{5}$ is a well-known example: it is easy to see that $\md{\LS{5}}=1$.
\end{example}

\begin{example}
Let $\DL$ be the difference logic, the logic of frames of the form $(X,\neq)$.  Recall that $\DL$ is defined by the formulas
$p\imp \Box\Di p$,  $\Di \Di p\imp \Di p\vee p$.
Due to \cite{Sheht-MD-16}, $\md{\DL}=2$.
\end{example}

\improve{
\subsection{Modal depth via models}
}

\improve{\cite{ShehtmanMD-md}}
\begin{proposition}\label{prop:md-lt}
If $L$ is a logic of finite modal depth, then  $L$ is locally tabular.
\end{proposition}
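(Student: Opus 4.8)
The plan is to show directly that $L$ is $k$-finite for every $k<\omega$, i.e.\ that $\Lind{L}{k}$ is finite. Let $d=\md{L}$; by hypothesis $d<\omega$, so by definition of the $L$-modal depth every $k$-formula $\vf$ is $L$-equivalent to some formula $\psi$ with $\md{\psi}\le d$. Hence the $L$-equivalence classes of $k$-formulas coincide with the $L$-equivalence classes of $k$-formulas of modal depth at most $d$, and it suffices to bound the number of the latter. Since $L\supseteq \LK{}_\Al$, provable equivalence in the minimal logic $\LK{}_\Al$ refines $L$-equivalence; therefore it is enough to prove the following counting lemma: \emph{for a finite alphabet $\Al$ and finite $k,d$, there are only finitely many $k$-formulas of modal depth $\le d$ that are pairwise non-equivalent in $\LK{}_\Al$.}

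I would prove the lemma by induction on $d$. For $d=0$ a $k$-formula of modal depth $0$ is a Boolean combination of $p_0,\dots,p_{k-1}$, and classical propositional logic already yields at most $2^{2^k}$ pairwise non-equivalent such formulas. For the step, write $N_d$ for the number of pairwise non-$\LK{}_\Al$-equivalent $k$-formulas of modal depth $\le d$. Every $k$-formula of modal depth $\le d+1$ is a Boolean combination of the variables $p_0,\dots,p_{k-1}$ together with subformulas of the form $\Di\psi$, where $\Di\in\Al$ and $\md{\psi}\le d$. The crucial point is the congruence rule available in any normal logic: from $\psi\iff\psi'\in\LK{}_\Al$ and monotonicity one obtains $\Di\psi\iff\Di\psi'\in\LK{}_\Al$, so up to $\LK{}_\Al$-equivalence there are at most $|\Al|\cdot N_d$ distinct formulas $\Di\psi$. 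Treating these, together with $p_0,\dots,p_{k-1}$, as at most $m:=k+|\Al|\cdot N_d$ fresh propositional atoms, every modal-depth-$\le(d+1)$ formula becomes a Boolean combination of them, of which there are at most $2^{2^m}$ pairwise non-equivalent; any genuine modal identities among these atoms only collapse classes further, so $N_{d+1}\le 2^{2^{k+|\Al|\cdot N_d}}<\omega$. This closes the induction.

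With the lemma in hand the proposition follows: for each $k$ the number of $L$-equivalence classes of $k$-formulas is at most $N_d$, so $\Lind{L}{k}$ is finite and $L$ is $k$-finite; as $k$ was arbitrary, $L$ is locally tabular. I expect the only real work to be the bookkeeping in the inductive step --- specifically, justifying that the set of modalized subformulas $\Di\psi$ is finite up to equivalence (which relies on both the finiteness of $\Al$ and the congruence rule of normal logics), and that passing to Boolean combinations of finitely many such subformulas, regarded as fresh atoms, can only undercount the equivalence classes. The direction of refinement between $\LK{}_\Al$-equivalence and $L$-equivalence should be stated carefully, since it is exactly what lets a bound computed in the minimal logic control $|\Lind{L}{k}|$.
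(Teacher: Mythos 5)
Your proposal is correct and follows essentially the same route as the paper: the paper's proof is exactly the observation that, for any fixed finite alphabet, number of variables $k$, and depth bound $d$, there are only finitely many pairwise non-equivalent formulas (proved by induction on modal depth), combined with the fact that finite $\md{L}$ reduces every $k$-formula to one of depth at most $d$. Your write-up just fills in the bookkeeping the paper leaves implicit (the congruence rule, treating modalized subformulas as fresh atoms, and the refinement of $L$-equivalence by $\LK{}_\Al$-equivalence), so no substantive difference.
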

\begin{proof}
 Observe that for every logic, the number of pairwise non-equivalent formulas of a given finite modal depth and a given finite set of variables
 is finite (induction on the modal depth).
\end{proof}\improve{details?}
It is unknown if the converse is true. The following problem was stated in \cite{Sheht-MD-16}:
\begin{problem}\label{prob:LT-md}
Does local tabularity of $L$ imply that $L$ has the finite modal depth?
\end{problem}

\begin{proposition}
\hide{
If $L$ is locally tabular, then there is $d:\omega\to\omega$ such that
for every $k<\omega$,
$\sup\{\mdL{\vf}\mid \vf \text{ is a $k$-formula over $\Al$}\}=d(k)$.
}
If $L$ is locally tabular, then
for every finite $k$,
$\sup\{\mdL{\vf}\mid \vf \text{ is a $k$-formula over $\Al$}\}$ is finite.
\end{proposition}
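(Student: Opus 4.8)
The plan is to show that the map $\vf\mapsto\mdL{\vf}$ factors through the finite Lindenbaum algebra $\Lind{L}{k}$, so that on $k$-formulas it takes only finitely many values. First I would record that every individual value is finite: for any $k$-formula $\vf$ we have $\vL\vd\vf\iff\vf$, so $\md{\vf}$ belongs to the set $\{\md{\psi}\mid \vL\vd\vf\iff\psi\}$ whose minimum defines $\mdL{\vf}$; hence $\mdL{\vf}\le\md{\vf}<\omega$.

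Next I would verify that $\mdL{\vf}$ is an invariant of the $L$-equivalence class of $\vf$. Suppose $\vL\vd\vf\iff\vf'$. Since $L$ contains all classical tautologies and is closed under modus ponens, for every $\psi$ we have $\vL\vd\vf\iff\psi$ exactly when $\vL\vd\vf'\iff\psi$. Thus the two sets over which the defining minima are taken coincide, giving $\mdL{\vf}=\mdL{\vf'}$. Consequently the assignment $[\vf]\mapsto\mdL{\vf}$ is a well-defined map on the set of $L$-equivalence classes of $k$-formulas, that is, on $\Lind{L}{k}$.

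Finally I would invoke local tabularity. By hypothesis $L$ is $k$-finite, so $\Lind{L}{k}$ is finite. The set of values $\{\mdL{\vf}\mid \vf\text{ a }k\text{-formula over }\Al\}$ is therefore the image of a finite set under the map just described, hence a finite set of natural numbers, and so
$$
\sup\{\mdL{\vf}\mid \vf \text{ is a } k\text{-formula over } \Al\}<\omega.
$$

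There is essentially no obstacle here: the whole content is that $\mdL{}$ is constant on the classes of the finite quotient $\Lind{L}{k}$. The only point worth flagging is that this statement must not be confused with Problem \ref{prob:LT-md}: the present claim fixes $k$ and bounds the depth only of $k$-formulas, whereas a bound uniform in $k$ (equivalently, finiteness of $\md{L}$) is exactly the open question.
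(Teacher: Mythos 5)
Your proof is correct and is essentially the paper's own argument made explicit: the paper's one-line proof ("the number of pairwise non-equivalent in $L$ $k$-formulas is finite") relies on exactly the observations you spell out, namely that $\mdL{\vf}\le\md{\vf}<\omega$ and that $\mdL{}$ is constant on $L$-equivalence classes, so it takes only finitely many (finite) values on the finite quotient $\Lind{L}{k}$. Your closing remark distinguishing this fixed-$k$ bound from the open problem about a bound uniform in $k$ is also accurate.
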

\begin{proof}
 Trivial: the number of
 pairwise non-equivalent in $L$ $k$-formulas is finite.
\end{proof}
\improve{Iff? Add the syntactic staff, some axioms}

\begin{definition}\label{def:md-on-models}
In a $k$-model $M=(F,\theta)$, let   {\em $d$-equivalence} $\sim_{M,d}$ be the equivalence induced in $M$ by all $k$-formulas of depth at most $d$.
The {\em modal depth $\md{M}$ of $M$} is defined as
the least finite $d$ such that $\sim_{M,d}\,=\, \sim_{M,d+1}$, if such a $d$ exists. Otherwise, set $\md{M}=\omega$.
\end{definition}

The following is straightforward from definitions.
\begin{proposition}\label{prop:md:TFAEmod} For a $k$-model $M$ and $d<\omega$, TFAE:
\begin{enumerate}[(a)]
\item $\md{M}\leq d$.
\item $\sim_{M,d}$ is the equivalence induced in $M$ by all $k$-formulas.
\item $\sim_{M,D}\,=\,\sim_{M,D+1}$ for each $D\geq d$.
\item In $M$, every $k$-formula is equivalent to a $k$-formula of modal depth at most $d$.
\item The quotient set modulo $\sim_{M,d}$ is tuned in the frame of $M$.
\end{enumerate}
\end{proposition}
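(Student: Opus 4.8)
The plan is to reduce all five conditions to two structural facts about the descending chain of equivalences $\sim_{M,0}\supseteq{\sim_{M,1}}\supseteq\cdots$ and then read off the equivalences formally. The first fact I would establish is \emph{definability}: for each $d$ there are, up to equivalence in $M$, only finitely many $k$-formulas of modal depth at most $d$ — this is the finiteness observation already used in the proof of Proposition~\ref{prop:md-lt}, proved by induction on $d$. It follows at once that $\sim_{M,d}$ has only finitely many classes and that each class $U$ is the extension $\vext(\chi_U)$ of a single formula $\chi_U$ with $\md{\chi_U}\le d$, obtained as the conjunction of representatives of all depth-$\le d$ formulas true throughout $U$ (closure under negation makes this work in both directions).

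The second, and central, fact is a \emph{one-step recursion} describing $\sim_{M,d+1}$ in terms of $\sim_{M,d}$. Every formula of depth at most $d+1$ is a Boolean combination of variables and formulas $\Di\psi$ with $\md{\psi}\le d$; since $\vext(\psi)$ is a union of $\sim_{M,d}$-classes, $\psi$ is $M$-equivalent to a disjunction of characteristic formulas $\chi_U$, and as $\Di$ distributes over disjunction, $\Di\psi$ is $M$-equivalent to a disjunction of the formulas $\Di\chi_U$. Hence $\sim_{M,d+1}$ is exactly the common refinement of $\sim_{M,d}$ with the equivalences induced by the finitely many formulas $\Di\chi_U$ ($\Di\in\Al$, $U\in X/{\sim_{M,d}}$); in particular $\sim_{M,d}={\sim_{M,d+1}}$ iff every $\Di\chi_U$ is constant on $\sim_{M,d}$-classes. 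From this I would deduce \emph{stabilization}: once $\sim_{M,d}={\sim_{M,d+1}}$, the same formulas $\Di\chi_U$ are available one level higher and are already $\sim_{M,d}$-invariant, so $\sim_{M,d+1}={\sim_{M,d+2}}$, and by induction the chain is constant from stage $d$ on; thus $\sim_{M,d}$ then coincides with the equivalence $\bigcap_{i<\omega}{\sim_{M,i}}$ induced by all $k$-formulas.

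Granting these, the equivalences fall out. For (c)$\Rightarrow$(b) I would invoke stabilization, and for (b)$\Rightarrow$(c) the sandwich $\bigcap_i{\sim_{M,i}}\subseteq{\sim_{M,d+1}}\subseteq{\sim_{M,d}}$, which collapses once the two ends coincide. For (b)$\Rightarrow$(d), the extension of any formula is a union of finitely many $\sim_{M,d}$-classes and so is $M$-equivalent to the disjunction of the corresponding $\chi_U$, of depth $\le d$; and (d)$\Rightarrow$(b) is immediate. The equivalence of (a) with (c) I would read off Definition~\ref{def:md-on-models} together with stabilization, since $\sim_{M,d}={\sim_{M,d+1}}$ is precisely the assertion that the non-increasing chain $(\sim_{M,i})_i$ has settled by stage $d$, which is what $\md{M}\le d$ records. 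Finally, for (c)$\Iff$(e) I would use the recursion once more: $\sim_{M,d}={\sim_{M,d+1}}$ iff each $\Di\chi_U$ is $\sim_{M,d}$-invariant, i.e. iff $\vext(\Di\chi_U)=R_\Di^{-1}[U]$ is a union of $\sim_{M,d}$-classes for every $\Di$ and every block $U$ of $X/{\sim_{M,d}}$; since $U=\vext(\chi_U)$, this is exactly condition~\eqref{prop:tuned-d} of Proposition~\ref{prop:tuned-in-forms} for the partition $\clU=X/{\sim_{M,d}}$, i.e. tunedness of $\clU$ in $M$.

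I expect the main obstacle to be the interplay of the two structural facts: one must know that $\sim_{M,d}$ has only finitely many classes, each \emph{definable} by a depth-$\le d$ formula, in order to replace an arbitrary $\Di\psi$ by the finitely many $\Di\chi_U$ and thereby present the passage from $\sim_{M,d}$ to $\sim_{M,d+1}$ as a single modal step. Once this recursion — and the stabilization it yields — is in place, the remaining implications, including the bridge to tuned partitions via condition~\eqref{prop:tuned-d}, are formal.
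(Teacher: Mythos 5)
Your proof is correct, and it is in fact more of a proof than the paper gives: the paper's own justification of this proposition consists of a citation to \cite{Sheht-MD-16} for the equivalence of (a)--(d) and the remark that (a) $\Iff$ (e) is straightforward. Your two structural facts are exactly the standard machinery one would expect behind the cited result --- finitely many $k$-formulas of depth $\le d$ up to $M$-equivalence (the observation from the proof of Proposition \ref{prop:md-lt}, valid because $k$ is finite, which is indeed the setting in which the paper uses this proposition), hence characteristic formulas $\chi_U$ for the $\sim_{M,d}$-classes, and the one-step recursion exhibiting $\sim_{M,d+1}$ as the refinement of $\sim_{M,d}$ by the finitely many formulas $\Di\chi_U$. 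What your route buys beyond the paper's outsourcing is uniformity: the same recursion that gives stabilization (hence (a)--(d)) also delivers the bridge to (e) at no extra cost, since $\vext(\Di\chi_U)=R_\Di^{-1}[U]$, so $\sim_{M,d}$-invariance of all the $\Di\chi_U$ is literally condition \eqref{prop:tuned-d} of Proposition \ref{prop:tuned-in-forms} for the partition $X/{\sim_{M,d}}$. All the individual steps check out, including the propagation of stabilization (once $\sim_{M,d}={\sim_{M,d+1}}$, the level-$(d{+}1)$ classes have the same characteristic formulas, so the next refinement is again trivial).

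One caveat, which is an artifact of the paper's Definition \ref{def:md-on-models} rather than a defect of your argument: read literally, $\md{M}=\sup\{i\mid \sim_{M,i}\,\neq\,\sim_{M,i+1}\}$ makes (a) one level weaker than (c). For instance, in the model with two points $a\,R\,b$ and $p_0$ true everywhere, $\sim_{M,0}$ is total while $\sim_{M,1}$ is the identity (witness $\Di p_0$), so $\md{M}=\sup\{0\}=0$ and (a) holds at $d=0$, yet (b)--(e) all fail at $d=0$. Your gloss that $\md{M}\le d$ "records that the chain has settled by stage $d$" is the intended reading --- it is the one under which the proposition is true and under which the paper applies it --- but it silently replaces the stated supremum by the least stabilization index; a careful write-up should either note this or restate the definition of $\md{M}$ accordingly.
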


\subsection{Finite modal depth in frames}\label{sec:fmd-defs}
Our goal is to introduce the notion of modal depth for frames and their classes such that:
\begin{center}
The modal depth of the logic of a  class $\clF$  is finite iff
the modal depth of $\clF$ is finite.
\end{center}
For this, we adapt the valuation-free version of Definition \ref{def:md-on-models}.

\improve{
\IS{History: Fine 74 (k4-I), Shehtman} \IS{Check: $S5$, $DL$}}
\improve{Pseudo-syntactic criterion?}


Recall that for a set $X$  and a family $\clV$ of its subsets, $\eq{\clV}$ denotes the equivalence induced by $\clV$ on $X$:
$$a\,\eq{\clV}\, b \text{ iff }  \AA Y\in\clV \, (a\in Y \Leftrightarrow b\in Y).$$
\later{\IS{Repetition}
One can easily observe that if $\clV$ is finite, then $X{/}\eq{\clV}$ is the set atoms of the Boolean algebra generated by $\clV$ in $\pwr(X)$.
}

\improve{Done?
\IS{Let $\sim_{\clV,0}$ be $X\times X$,\IS{This is good for root, and bad for arithmetic; Fixed!}}
and $\sim_{\clV,1}$  be $\eq{\clV}$; }
Now assume that $X$ is the domain of a frame $F$ whose relations are $R_\Di$, $\Di\in \Al$.
For $d<\omega$, we recursively define the equivalence $\sim_{\clV,d}$ on $X$
and the corresponding quotient $\clV_d= X{/}{\sim_{\clV,d}}$.
Let
$\sim_{\clV,0}$  be $\eq{\clV}$;
we define $\sim_{\clV,d+1}$ as the equivalence induced by the family
$$
\clV_{d} \cup
\{
R_{\Di}^{-1} [ V ]  \mid \Di\in \Al\,
\& \, V \in \clV_{d}
\}.
$$
The families $\clV_d$ and equivalences $\sim_{\clV,d}$ are said to be {\em induced by $\clV$ in $F$}.

We also put $\clV_\omega=\bigcup_{d\in\omega} \clV_d$.\improve{ and denote the corresponding equivalence
$\sim_{\clV,\omega}$?}

The following analog of Proposition \ref{prop:md:TFAEmod} is straightforward from definitions.
\begin{proposition}
Let $\clV$ be finite.  Then we have:
\begin{enumerate}
  \item All $\clV_d$ are finite, and
  \begin{equation}\label{eq:tuned-card}
    |\clV_{d+1}|\leq |\clV_d|\cdot 2^{|\Omega|\cdot|\clV_d|}
  \end{equation}
  \item If $\clV_d=\clV_{d+1}$, then $\clV_{D+1}=\clV_D$ for all $D> d$,  and consequently, $\clV_d=\clV_{\omega}$.
  \item If $D>d$, $U\in \clV_d$  and $V\in \clV_D$, then for all $\Di\in\Omega$,
\begin{equation}\label{eq:tunedpair}
V\subseteq R_\Di^{-1}[U] \text{ of }V\cap R_\Di^{-1}[U]=\emp.
\end{equation}
  \item $\clV_d=\clV_{d+1}$ iff $\clV_d$ is tuned in $F$.
\end{enumerate}

\end{proposition}

\hide{OLD:
It is immediate that if $\clV$ is finite, then all $\clV_d$ are, and if $\clV_d=\clV_{d+1}$ for \todo{removed: some positive $d$; recheck}, then
$\clV_d=\clV_\omega$. Also,
\todo{for positive $d$, }
$\clV_d=\clV_{d+1}$ iff $\clV_d$ is tuned.
Another simple observation is that if $V\in \clV_d$ and $U\in \clV_c$ for some $c<d$, then
\begin{equation}\label{eq:tunedpair-old}
V\subseteq R^{-1}[U] \text{ of }V\cap R^{-1}[U]=\emp.
\end{equation}

}
\improve{
\IS{Give the uniform arithmetic;}}

\improve{Block-proposition?}

\begin{definition}
Let $F=(X,(R_\Di)_\Al,\clP)$ be a general frame,
and $\clV\subseteq \clP$.
For $V\in\clV_\omega$, let 
$\md{V}=\min\{d\in\omega \mid V\in \clV_d\}$.
We define the {\em modal depth $\md{\clV}$ of $\clV$ in $F$}
as
$\sup\{\md{V} \mid  V\in\clV_\omega\}$.
The {\em modal depth $\md{F}$ of $F$} is
$\sup\{\md{\clV} \mid  \clV  \text{ is a finite subset of $\clP$} \}$.
For a class $\clF$ of general frames,
$\md{\clF}=\sup\{\md{F} \mid  F\in\clF \}$.
\end{definition}

\later{
\begin{example}\label{ex:DL-S5-clusterMD}
It is trivial that the modal depth of a frame $(X,X\times X)$ is 1. (Why 1? 0!)

It is also not difficult to check that the modal depth of any point-generated $\DL$ frame $F=(X,R)$
is also 1: we have $R\cup Id_X\; =\; X\times X$, and one can check that  every partition is tuned for $R$.
\improve{details}
\end{example}
}

Hence, if $\md{\clF}$  is finite, then $\clF$ is uniformly tunable by \eqref{eq:tuned-card}, and  by Corollary \ref{cor:tunable-frame} we have:
\begin{equation}\label{eq:find-lf}
\text{If $\md{\clF}$  is finite, then $\Log(\clF)$ is locally tabular. }
\end{equation}
Our goal is to show that the modal depth of $\Log(\clF)$ is finite.


\improve{
\IS{Make a better structure of the text; say words about depth of models; make the Definition environment consistent}
}

\begin{proposition}\label{prop:md-models}
Let $M=(F,\v)$ be a $k$-model with $k<\omega$, and let $\clV=\{\v(p_i)\mid i<k\}$.
Then:
\begin{enumerate}
  \item For each $d<\omega$, $\sim_{\clV,d}\,=\,\sim_{M,d}$.
  \item If for some $d$, $\sim_{\clV,d}\,=\,\sim_{\clV,d+1}$, then every $k$-formula is equivalent in $M$ to a $k$-formula of depth at most $d$.
  \item \label{item:mds-are-equal} The modal depth of $M$ is the modal depth of $\clV$ in $F$.
\end{enumerate}
\end{proposition}
\begin{proof}
The first statement is straightforward. The second is by Proposition \ref{prop:md:TFAEmod}.
The third statement follows from the first.
\end{proof}

\begin{proposition}\label{prop:mdFrame-via-models} For a frame $F$,
$$\md{F}=\sup\{\md{M}\mid k<\omega \text{ and $M$ is a $k$-model on $F$} \}$$
\end{proposition}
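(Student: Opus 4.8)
The goal is to prove that the frame modal depth $\md{F}$, defined purely combinatorially via the iterated equivalences $\sim_{\clV,d}$ generated by finite subsets $\clV$ of $\clA$, coincides with the supremum of the model depths $\md{M}$ over all finite $k$-models $M$ on $F$. The natural plan is to leverage Proposition \ref{prop:md-models}, which already bridges the valuation-free side and the model side, and then to argue that the two suprema agree term by term.

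First I would fix a finite $k$ and a $k$-model $M=(F,\v)$ on $F$, and set $\clV=\{\v(p_i)\mid i<k\}$, a finite subset of $\clA$. By Proposition \ref{prop:md-models}(1), we have $\sim_{\clV,d}\,=\,\sim_{M,d}$ for every $d<\omega$. Comparing Definition \ref{def:md-on-models} (where $\md{M}=\sup\{d\mid\,\sim_{M,d}\,\neq\,\sim_{M,d+1}\}$) with the observation preceding the definition of $\md{F}$ (that $\clV_d=\clV_{d+1}$ iff $\clV_d$ is tuned, and that the stabilization point is exactly $\md{\clV}$), I would verify that $\md{M}=\md{\clV}$: both measure the least stage at which the chain of equivalences stabilizes. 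Concretely, $\md{\clV}=\sup\{\md{V}\mid V\in\clV_\omega\}$ is the smallest $d$ with $\clV_d=\clV_\omega$ (equivalently $\clV_d=\clV_{d+1}$), while $\md{M}$ is the largest $d$ at which $\sim_{M,d}$ still differs from its successor; since the equivalences form a refining chain that stabilizes monotonically, these coincide. This gives the inequality $\md{F}\geq\md{M}$ for every model, hence $\md{F}$ bounds the right-hand supremum.

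For the reverse inequality, I would start from the definition $\md{F}=\sup\{\md{\clV}\mid\clV\text{ a finite subset of }\clA\}$ and show every such $\clV$ is realized by a model. Given a finite $\clV=\{V_0,\ldots,V_{k-1}\}\subseteq\clA$, define the $k$-valuation $\v$ by $\v(p_i)=V_i$; this is legitimate because $\clV\subseteq\clA=\clP$, so each $V_i$ lies in the admissible set of the general frame, and $M=(F,\v)$ is a genuine $k$-model. Then $\{\v(p_i)\mid i<k\}=\clV$, and by the forward direction $\md{M}=\md{\clV}$, so $\md{\clV}$ appears on the right-hand side. Taking suprema yields $\md{F}\leq\sup\{\md{M}\mid k<\omega,\ M\text{ a }k\text{-model on }F\}$, completing the equality.

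The main obstacle, and the step deserving the most care, is the precise matching $\md{M}=\md{\clV}$ between the ``largest gap'' formulation of Definition \ref{def:md-on-models} and the ``least stabilization stage'' implicit in $\md{\clV}$. One must check that the refining chain $\sim_{\clV,0}\supseteq\sim_{\clV,1}\supseteq\cdots$ is monotone and that once $\clV_d=\clV_{d+1}$ it is constant thereafter (noted in the text), so that the supremum of the indices with a strict refinement equals the least index of stabilization minus the appropriate offset; I would track the indexing conventions carefully, since $\sim_{\clV,0}=\eq{\clV}$ fixes the base case. A secondary subtlety is ensuring the argument is insensitive to whether $F$ is a Kripke or general frame: the only place the admissible set $\clA$ enters is in guaranteeing that the valuation $\v$ with $\v(p_i)\in\clA$ is well-defined, which is exactly condition \eqref{eq:def-restr-on-general} in spirit, so the restriction $\clV\subseteq\clA$ in the definition of $\md{F}$ is precisely what makes the two sides align.
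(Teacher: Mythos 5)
Your proof is correct and follows essentially the same route as the paper, which derives this proposition directly from Proposition \ref{prop:md-models}: part (1) identifies $\sim_{M,d}$ with $\sim_{\clV,d}$ so that $\md{M}=\md{\clV}$, and conversely every finite $\clV\subseteq\clA$ arises as the family of valuation sets of a $k$-model. Your explicit attention to the stabilization indexing (the ``largest strict refinement'' versus ``least stabilization stage'' bookkeeping) and to admissibility of the valuation in a general frame is, if anything, more careful than the paper's one-line justification.
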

\begin{proof}
 From Proposition \ref{prop:md-models}\eqref{item:mds-are-equal}.
\end{proof}

\begin{proposition}\label{prop:mdF-leq-mdL}
For a class $\clF$ of general frames,
 $\md{\clF}\leq  \md{\Log\clF}$.
\end{proposition}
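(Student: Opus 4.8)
The plan is to reduce the class-level inequality to a frame-by-frame, then model-by-model, comparison, and to transfer $L$-provable equivalences into truth in the relevant models. Write $L=\Log\clF$. If $\md{L}=\omega$ the inequality $\md{\clF}\leq\md{L}$ is trivial, so I would assume $\md{L}=D<\omega$. By Proposition \ref{prop:mdFrame-via-models}, for each $F\in\clF$ the modal depth $\md{F}$ is the supremum of $\md{M}$ over all $k$-models $M$ on $F$ with $k<\omega$; hence it suffices to show $\md{M}\leq D$ for every such $M$ on every $F\in\clF$.

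The core step is the transfer of equivalences. Fix a $k$-model $M=(F,\theta)$ with $F\in\clF$. Since $\md{L}=D$, every $k$-formula $\vf$ satisfies $\mdLog{\vf}{L}\leq D$, so there is a $k$-formula $\psi$ with $\md{\psi}\leq D$ and $\vf\iff\psi\in L$. As $L=\Log\clF$ is valid in $F$, the equivalence $\vf\iff\psi$ is true in $M$, i.e. $\vf$ and $\psi$ have the same extension in $M$. Thus in $M$ every $k$-formula is equivalent to a $k$-formula of modal depth at most $D$, and the implication $(d)\Rightarrow(a)$ of Proposition \ref{prop:md:TFAEmod} yields $\md{M}\leq D$.

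Taking the supremum over all $k$-models $M$ on $F$ gives $\md{F}\leq D$ by Proposition \ref{prop:mdFrame-via-models}, and taking the supremum over $F\in\clF$ gives $\md{\clF}\leq D=\md{\Log\clF}$, as required.

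I expect the only delicate point to be the bookkeeping forced by general frames: a $k$-model on a general frame must use an admissible valuation (each $\theta(p_i)$ in the designated subalgebra), and validity in $F$ means truth under all admissible valuations. I would check that this is exactly the setting underlying Propositions \ref{prop:md-models} and \ref{prop:mdFrame-via-models}, so that the identification of the syntactic equivalence $\sim_{M,d}$ with the valuation-free $\sim_{\clV,d}$ for $\clV=\{\theta(p_i)\mid i<k\}$ is legitimate, and so that passing between $\vf$ and $\psi$ introduces no inadmissible sets. Everything else is a routine chain of suprema.
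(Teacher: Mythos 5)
Your proof is correct and follows essentially the same route as the paper, whose proof is just the one-line citation of Proposition \ref{prop:md-models}(1): both arguments reduce frame depth to model depth, transfer $L$-provable equivalences $\vf\iff\psi$ to truth in models on $L$-frames, and conclude via the equivalence of (d) and (a) in Proposition \ref{prop:md:TFAEmod}. Your attention to admissible valuations in general frames (and the implicit normalization of $\psi$ to a $k$-formula by substitution) is exactly the right bookkeeping and introduces no gap.
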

\begin{proof}
Assume that $d<\md{\clF}$ and show that $d<\md{\Log\clF}$.
We have $d<\md{F}$ for some $F\in\clF$, and by Proposition \ref{prop:mdFrame-via-models} we have that  $d<\md{M}$ for some $k<\omega$ and a $k$-model $M$ on $F$.
Now $d<\md{\Log\clF}$, since $M\mo L$.
\end{proof}

The following theorem gives a semantic characterization of the modal depth of a logic.
\begin{theorem}\label{thm:md:disjclosed}
Let $\clF$ be a class of general frames closed under countable\improve{finite} disjoint sums.
Then the modal depth of $\clF$ is the modal depth of its logic.
\end{theorem}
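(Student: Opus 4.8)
The plan is to prove the nontrivial inequality $\md{\Log\clF}\le\md{\clF}$, since the reverse inequality $\md{\clF}\le\md{\Log\clF}$ is exactly Proposition \ref{prop:mdF-leq-mdL}; the two together give the asserted equality. If $\md{\clF}=\omega$ there is nothing to prove, so I would set $d:=\md{\clF}<\omega$ and show that every formula is $\Log\clF$-equivalent to a formula of modal depth at most $d$, which says precisely that $\md{\Log\clF}\le d$.

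First I would fix a $k$-formula $\vf$ and recall, as in the proof of Proposition \ref{prop:md-lt}, that there are only finitely many $k$-formulas of modal depth at most $d$ up to equivalence in the minimal logic $\LK{}_\Al$; let $\Psi$ be a finite set of representatives. It then suffices to find some $\psi\in\Psi$ with $\vf\iff\psi\in\Log\clF$. Suppose, for contradiction, that no such $\psi$ works. Then for each $\psi\in\Psi$ the equivalence $\vf\iff\psi$ is not valid in $\clF$, so there is a frame $F_\psi\in\clF$ with an admissible valuation and a point at which $\vf$ and $\psi$ receive different truth values.

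The decisive step is to glue these witnesses. I would form the disjoint sum $F=\bigsqcup_{\psi\in\Psi}F_\psi$; as $\Psi$ is finite this is a countable disjoint sum, so $F\in\clF$ by the closure hypothesis. Equipping $F$ with the valuation that restricts on each summand to the chosen witnessing one yields a $k$-model $M$ in which, since truth at a point of a disjoint sum is computed inside its summand, $\vf$ still disagrees with every $\psi\in\Psi$ somewhere (namely in the corresponding $F_\psi$-component). Now $F\in\clF$ gives $\md{F}\le d$, hence $\md{M}\le d$ by Proposition \ref{prop:mdFrame-via-models}, and by the implication \eqref{prop:tuned-a} $\Rightarrow$ \eqref{prop:tuned-d}\,---\,more precisely by Proposition \ref{prop:md:TFAEmod}, (a)$\Rightarrow$(d)\,---\,the formula $\vf$ is equivalent in $M$ to some $\chi$ with $\md{\chi}\le d$. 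But $\chi$ is $\LK{}_\Al$-equivalent to some $\psi_0\in\Psi$, hence equivalent to $\psi_0$ in every model, in particular in $M$; so $\vf$ and $\psi_0$ agree throughout $M$, contradicting their disagreement in the $F_{\psi_0}$-component. This contradiction shows $\vf$ is $\Log\clF$-equivalent to a member of $\Psi$, and since $\vf$ was arbitrary, $\md{\Log\clF}\le d=\md{\clF}$.

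The main obstacle is uniformity: for different frames $\vf$ may be equivalent to different low-depth formulas, and the theorem requires a single formula that works across all of $\clF$. The role of the disjoint-sum closure, combined with the finiteness of depth-$\le d$ formulas over $k$ variables, is exactly to let me amalgamate finitely many local counterexamples into one model and thereby pin $\vf$ to a single representative. I expect the genuine care to lie in verifying that disjoint sums of general frames remain in $\clF$ and preserve pointwise truth (so that the witnessing disagreements survive in $M$), rather than in any hard computation; note also that finite disjoint sums already suffice, so the countability hypothesis is used only comfortably.
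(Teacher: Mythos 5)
Your proof is correct, and it shares the paper's central mechanism — use disjoint-sum closure to amalgamate witness frames into a single model $M$ with $\md{M}\leq\md{\clF}$, collapse formulas to depth $\leq d$ inside $M$ via Propositions \ref{prop:mdFrame-via-models} and \ref{prop:md:TFAEmod}, then transfer to the logic — but the implementation differs in a meaningful way. The paper works once per number of variables $k$: it takes the disjoint sum over \emph{all} $k$-formulas outside $L=\Log\clF$, with one witnessing frame per non-theorem, obtaining an ``exact'' $k$-model of $L$ (truth in $M$ coincides with membership in $L$); equivalence inside this one model is then literally $L$-equivalence, so no contradiction argument or representative set is needed, and this exact model is reused later in the proof of Theorem \ref{thm:md-basic}. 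You instead work per formula $\vf$: you invoke the finiteness, up to $\LK{}_{\Al}$-equivalence, of $k$-formulas of depth at most $d$, sum only the finitely many witness frames for the representatives in $\Psi$, and derive a contradiction. What your route buys is precisely that only \emph{finite} disjoint sums are needed — a weaker closure hypothesis than the countable one stated (and, notably, the paper's source carries a marginal note suggesting exactly this improvement of ``countable'' to ``finite''). What the paper's route buys is directness (no contradiction, no bookkeeping with $\Psi$) and a reusable canonical-like object. One small caveat on your side: if ``countable'' were read as ``countably infinite,'' you would pad the sum by repeating a summand, which you essentially acknowledge; this does not affect correctness.
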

\begin{proof}
Let $L=\Log \clF$.
By Proposition \ref{prop:mdF-leq-mdL}, $\md{\clF}\leq \md{L}$.

\smallskip

We show that $\md{L}\leq \md{\clF}$. The case $\md{\clF}=\omega$ is trivial, so assume that $\md{\clF}$ is finite.

Let $k<\omega$.
Let $I$ be the set of all $k$-formulas $\vf$ that are not in $L$. For each $\vf\in I$, let $F_\vf$ be a frame in $\clF$ invalidating $\vf$.
Consider the disjoint sum $F$ of the family $(F_\vf)_{\vf\in I}$.
Then $F\in\clF$, and for some valuation $\v$, $M=(F,v)$ is an exact $k$-model of $L$, that is for each $k$-formula $\psi$ we have: $M\mo\psi$ iff $\psi\in L$.
Note that $\md{F}\leq \md{\clF}$. By the second statement of Proposition \ref{prop:md-models}, every $k$-formula is equivalent in $M$ to a $k$-formula of modal depth at most $\md{F}$.

It follows that every formula is $L$-equivalent to a formula of modal depth of at most $\md{\clF}$.
\end{proof}

Later we show that the property of the finite modal depth of a logic is inherited from any class of Kripke frames; however, the depth can change. We also aim to prove a finite modal depth variant of the finite height criterion -- an analog of Theorem \ref{thm:clust-crit}.  These facts are more technical, and we anticipate them with the following constructions.\later{General frame issue}

\subsection{Definability of upsets in pretransitive models}

We say that $Y$ is an {\em upset} in a frame (or model) $F$, if $R[Y]\subseteq Y$ for all relations in $F$.

\smallskip

\subsubsection*{\underline{Definability lemma}}\label{subsubs:defLem}
\improve{\IS{Words? Idea}}

Fix a finite $k$ for the number of variables.

In this subsection, we assume that $M=(F,\theta)$ is a $k$-model on a pretransitive frame
$\frF=(X,(R_\Di)_{\Di\in \Al})$,
$Y$ is a non-empty upset in $M$, $\sim$ is the equivalence induced in $M$ by all $k$-formulas, and $\sim_Y$ the restriction of $\sim$ to $Y$.

\smallskip
Assume that $\ff{Y}=Y/{\sim_Y}$ is finite.

Let $M\restr Y$ be the restriction  of $M$ to $Y$, that is the $k$-model
$(\frF\restr Y, \theta \restr Y)$, where $(\theta \restr Y)(p)=\theta(p)\cap Y$.
In this case, each  $\sim_Y$-class $\tau$ is defined
in $M\restr Y$ by a $k$-formula $\alpha$; we denote it $\alpha(\tau)$.
Without loss of generality, we may assume that $\alpha(\tau)$
contains the conjunct $p_l$ or $\neg p_l$ for each $l<k$, in accordance to what is true in $\tau$.
Put
\begin{equation}\label{eq:defL:defOfPsi}
\Phi=\{\alpha(\tau)\mid \tau \in \ff{Y}\}.
\end{equation}

For a point $a$ in $Y$, let $\alpha(a)$ denote $\alpha(\tau)$, where
$a\in \tau$.
Clearly, $M\restr Y,a\mo\alpha(a)$. And since $M\restr Y$ is a generated submodel of $M$, we have
\begin{equation}\label{eq:md-basic-1}
 M,a\mo\alpha(a).
\end{equation}

\smallskip

\IS{Some $\Psi$-staff removed. Old version is stored in v.9}

Now for each $a\in Y$ we describe a formula which defines the $\sim$-class of $a$ in the model $M$.

On $\ff{Y}$, consider the minimal filtered relation $\ff{R}_{\Di}$  of $R_\Di$,
that is:
$\tau_1 \ff{R}_{\Di} \tau_2$  iff $a R_\Di b$ for some $a\in\tau_1$ and $b\in \tau_2$.

Let $\gamma(Y,\Phi)$ be the
conjunction of the following formulas:
\begin{gather}
\Box^*  \bigwedge \left\{ \alpha(\tau_1)\imp \Di \alpha(\tau_2) \mid  \tau_1,\tau_2 \in \ff{Y}, (\tau_1,\tau_2)\in \ff{R}_{\Di},\; \Di\in \Al\right\};   \label{eq:Jank1}\\
\Box^*  \bigwedge \left\{ \alpha(\tau_1)\imp \neg\Di  \alpha(\tau_2) \mid  \tau_1,\tau_2 \in \ff{Y}, (\tau_1,\tau_2)\notin \ff{R}_{\Di},\; \Di\in \Al\right\};\label{eq:Jank2}\\
\Box^* \bigvee \left\{ \alpha(\tau) \mid \tau\in \ff{Y} \right\}.\label{eq:Jank4}
\end{gather}
So $\gamma(Y,\Phi)$ is a variant of Jankov-Fine formula, where instead of variables we use formulas $\Phi$, and no root is required.

For $a\in Y$, we let $\beta(a,Y,\Phi)$ be the formula $\alpha(a)\con \gamma(Y,\Phi)$.

It is straightforward that $M\restr Y,a\mo \gamma(Y,\Phi)$, and so $M,a\mo\gamma(Y,\Phi)$. Hence,
\begin{equation}\label{eq:md-gamma-in-a}
  M,a\mo \beta(a,Y,\Phi)
\end{equation}
according to \eqref{eq:md-basic-1}.
\begin{lemma}[Definability lemma]
For each $a$ in $Y$, for each $b$ in $M$,
\begin{equation}\label{eq:defin-lemma}
M,b\mo \beta(a,Y,\Phi) \textrm{ iff } a\sim b.
\end{equation}
\end{lemma}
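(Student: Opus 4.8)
The plan is to prove the two implications in \eqref{eq:defin-lemma} separately, relying throughout on one auxiliary observation: the finite partition $\ff{Y}=Y/{\sim_Y}$ is $R_\Di$-tuned in the restriction $F\restr Y$ for every $\Di\in\Al$. This holds because $\sim_Y$ is induced by \emph{all} $k$-formulas: if $x\sim_Y x'$ and $x R_\Di d$ with $d$ in a class $\tau_2$, then $M\restr Y,x\mo\Di\alpha(\tau_2)$, hence $M\restr Y,x'\mo\Di\alpha(\tau_2)$, so $x'$ too has a $\Di$-successor in $\tau_2$, which is exactly condition \eqref{eq:part}; equivalently, the canonical projection $F\restr Y\toto(\ff{Y},(\ff{R}_\Di)_\Al)$ is a p-morphism by Proposition \ref{prop:tuned-in-forms}. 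Two standing facts will also be used: since $Y$ is an upset, $R^*_F(a)\subseteq Y$ for $a\in Y$ and every $R_\Di$-successor of a point of $Y$ lies in $Y$; and since $F$ is pretransitive, $M,x\mo\Box^*\psi$ holds iff $\psi$ is true at every point of $R^*_F(x)$.

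For the direction $a\sim b\Rightarrow M,b\mo\beta(a,Y,\Phi)$: because $\beta(a,Y,\Phi)$ is a $k$-formula, it suffices to show $M,a\mo\beta(a,Y,\Phi)$ and then transport it along $\sim$. The conjunct $\alpha(a)$ holds at $a$ by \eqref{eq:md-basic-1}. Every other conjunct has the shape $\Box^*\psi$, so I must verify $\psi$ at each $c\in R^*_F(a)\subseteq Y$. For \eqref{eq:Jank4}, such $c$ lies in some class $\tau$, whence $M,c\mo\alpha(\tau)$ by definability and the generated-submodel property. For \eqref{eq:Jank1}, if $M,c\mo\alpha(\tau_1)$ then $c\in\tau_1$, and when $(\tau_1,\tau_2)\in\ff{R}_\Di$ tunedness yields a $\Di$-successor of $c$ inside $\tau_2$, so $M,c\mo\Di\alpha(\tau_2)$. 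For \eqref{eq:Jank2}, if $c\in\tau_1$ and $(\tau_1,\tau_2)\notin\ff{R}_\Di$, then $c$ can have no $\Di$-successor in $\tau_2$ (such a successor would witness $(\tau_1,\tau_2)\in\ff{R}_\Di$), so $M,c\mo\neg\Di\alpha(\tau_2)$; here I use that all $R_\Di$-successors of $c$ remain in $Y$ and are classified by the formulas $\alpha(\tau)$.

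For the converse $M,b\mo\beta(a,Y,\Phi)\Rightarrow a\sim b$, I would exhibit a bisimulation linking $a$ to $b$. Concretely, put
\[
Z=\{(x,y)\mid x\in Y,\ y\in R^*_F(b),\ M,y\mo\alpha(x)\}.
\]
Then $(a,b)\in Z$, since $M,b\mo\alpha(a)$, and atomic harmony is immediate because $\alpha(x)$ fixes $p_l$ or $\neg p_l$ for each $l<k$. As $M,b\mo\gamma(Y,\Phi)$ and each conjunct of $\gamma(Y,\Phi)$ begins with $\Box^*$, the conditions \eqref{eq:Jank1}, \eqref{eq:Jank2} and \eqref{eq:Jank4} hold at every point of $R^*_F(b)$, and I use them for the zig-zag clauses. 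For the forth clause, if $(x,y)\in Z$ and $x R_\Di x'$, then $x'\in Y$ and $([x],[x'])\in\ff{R}_\Di$, so \eqref{eq:Jank1} forces $M,y\mo\Di\alpha(x')$, producing $y'$ with $y R_\Di y'$ and $(x',y')\in Z$. For the back clause, if $y R_\Di y'$, then \eqref{eq:Jank4} gives a class $\tau'$ with $M,y'\mo\alpha(\tau')$; were $([x],\tau')\notin\ff{R}_\Di$, condition \eqref{eq:Jank2} would give $M,y\mo\neg\Di\alpha(\tau')$, contradicting $y R_\Di y'$; hence $([x],\tau')\in\ff{R}_\Di$, and tunedness then supplies a genuine $\Di$-successor $x'\in\tau'$ of $x$ with $(x',y')\in Z$. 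Since a bisimulation respecting $p_0,\dots,p_{k-1}$ preserves all $k$-formulas, $(a,b)\in Z$ yields $a\sim b$.

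I expect the back clause to be the main obstacle: it is the only step where an existential statement about the quotient $\ff{Y}$ must be pulled back to a concrete successor of $x$ in $F$, and this depends essentially on $Y/{\sim_Y}$ being tuned (equivalently, on the projection being a p-morphism), together with pretransitivity, which is what guarantees that \eqref{eq:Jank2} and \eqref{eq:Jank4} are available throughout $R^*_F(b)$ through the single operator $\Box^*$.
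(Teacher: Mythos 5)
Your proof is correct, but in the crucial ``only if'' direction it takes a genuinely different route from the paper's. The paper proves \eqref{eq:same-formulas} by induction on the structure of $k$-formulas: in the case $\vf=\Di\psi$, truth is transferred between $a$ and $b$ by exploiting the fact that $\sim$-equivalent points satisfy the same $k$-formulas together with the purely existential definition of $\ff{R}_\Di$ --- once $\tau_1\,\ff{R}_\Di\,\tau_2$ is established, the paper picks \emph{arbitrary} witnesses $a'\in\tau_1$, $c'\in\tau_2$ with $a'R_\Di c'$ and then moves back to $a$ and $c$ along $\sim$; consequently it never needs to know that the partition $\ff{Y}$ is tuned. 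You instead build an explicit bisimulation $Z$ and invoke the standard fact that bisimilar points satisfy the same formulas; this forces you, in the back clause, to pull each $\ff{R}_\Di$-arrow back to a successor of the \emph{specific} point $x$, which is exactly why you need the auxiliary tunedness (p-morphism) lemma --- and you correctly identify and prove it (your argument that $Y/{\sim_Y}$ is tuned is sound, and is in substance Franz\'{e}n's lemma, cf.\ Proposition \ref{prop:tuned-in-forms}). What each approach buys: yours is more modular, since the inductive core is delegated to the bisimulation-invariance lemma and the tunedness observation has independent interest; the paper's induction is leaner and self-contained, needing neither tunedness nor the bisimulation machinery. Your treatment of the ``if'' direction (verify $M,a\mo\beta(a,Y,\Phi)$ via the generated-submodel property and pretransitivity, then transport along $\sim$) is exactly the verification the paper leaves as ``straightforward.''
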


\begin{proof}

Within this proof, we abbreviate $\beta(a,Y,\Phi)$ as $\beta(a)$, and $\gamma(Y,\Phi)$ as $\gamma$.

The `if' direction of \eqref{eq:defin-lemma} follows from \eqref{eq:md-gamma-in-a}.

To prove the `only if', by induction on the formula structure,
for all $k$-formulas $\vf$ we show:
\begin{equation}\label{eq:same-formulas}
\text{for all $a$ in $Y$, $b$ in $M$, if } M,b\mo \beta(a), \textrm{ then } (M,a\mo \vf \tiff M,b\mo \vf).
\end{equation}
The basis of induction  follows from the definition of $\alpha(a)$. The Boolean cases are trivial. Assume that $\vf=\Di\psi$.

Let $M,b\mo\beta(a)$ and $M,a\mo \Di \psi$. We have $M,c\mo \psi$ and $a R_\Di c$ for some $c\in Y$.
By (\ref{eq:Jank1}),  we have $M,b\mo \alpha(a)\imp\Di\alpha(c)$.
Since $M,b\mo \alpha(a)$, we obtain $M,b\mo \Di \alpha(c)$. Hence, $M,d \mo\alpha(c)$ and $bR_\Di d$ for some $d$.
We have $M,b\mo\gamma$, and so
$M,d\mo\gamma$, since all conjuncts in $\gamma$ are under the scope of $\Box^*$.
 So $M,d\mo\beta(c)$. Hence $M,d\mo \psi$  by induction
hypothesis. Thus $M,b\mo \Di \psi$.

Now let $M,b\mo\beta(a)$ and $M,b\mo \Di \psi$. We have $M,d\mo \psi$ for some $d$ with $b R_\Di d$.
From (\ref{eq:Jank4}), we infer that $M,d\mo\alpha(c)$ for some $c\in Y$.
Since $M,b\mo \gamma$, we obtain $M,d\mo \gamma$. So $M,d\mo \beta(c)$. By induction
hypothesis, $M,c\mo\psi$.
We have $M,b\mo \Di  \alpha (c)$. We also have $M,b\mo \alpha(a)$.
Let $\tau_1$ and $\tau_2$ be the $\sim_Y$-classes of $a$ and $c$, respectively.
So we have $M,b\mo\alpha(\tau_1)\con \Di\alpha(\tau_2)$.
It follows from (\ref{eq:Jank2}) that $\tau_1 \ff{R}_{\Di} \tau_2$.
By the definition of $\ff{R}_\Di$, we have $a'R_\Di c'$ for some $a'\in\tau_1$ and $c'\in \tau_2$.
Since $M,c\mo\psi$, we get $M,c'\mo\psi$, so $M,a'\mo\Di\psi$, and finally $M,a\mo\Di\psi$, as required. This completes the proof of \eqref{eq:same-formulas}.
\end{proof}

\smallskip
\begin{remark}
In the case of finitely generated canonical models, this lemma can be used to define singletons in the top part of the model. In \cite{Glivenko2021}, it was used to generalize the {\em top-heavy property} (which is a classical result for the unimodal transitive case \cite[Section 8.6]{CZ}) for every logic that admits the finite height criterion.
\improve{wording; history;  \cite[Lemma~1.10]{Blok1980}}
\end{remark}

\subsubsection*{\underline{Stable top in a model}}
As before, let $\sim$ abbreviate $\sim_{M,\omega}$. 

\smallskip

\begin{proposition}\label{prop:md:gen_subfr}~
\begin{enumerate}
\item
Let $Y$ be an upset in a $k$-model $M$.
Then for each $i<\omega$ and each $a,b\in Y$, we have
\begin{equation}\label{eq:upset:1a}
  a\sim_{M,i} b \text{ iff }a\sim_{M\restr Y,i} b.
\end{equation}
If also  $\md{M\restr Y}\leq d <\omega$, then
the restriction ${\sim}\restr Y$ is the equivalence $\sim_{M\restr Y,d}$, that is:
  for each $a,b\in Y$,
\begin{equation}\label{eq:upset:c1}
a \sim b\text{ iff } a \sim_{M\restr Y,d} b.
\end{equation}
\item If $G$ is a generated subframe of $F$, then $\md{G}\leq \md{F}$.\improve{General?}
\end{enumerate}
\end{proposition}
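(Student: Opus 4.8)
The plan is to reduce everything to the fact --- already invoked in the Definability lemma --- that since $Y$ is an upset, the restriction $M\restr Y$ is a \emph{generated submodel} of $M$, and hence truth of every $k$-formula is preserved at points of $Y$: for all $a\in Y$ and all $k$-formulas $\vf$, $M,a\mo\vf$ iff $M\restr Y,a\mo\vf$ (induction on $\vf$, the modal step using $R_\Di[Y]\subseteq Y$). Recall that $\sim_{M,i}$ is the equivalence induced in $M$ by the $k$-formulas of modal depth at most $i$. For $a,b\in Y$, the depth-$\leq i$ formulas they satisfy in $M$ are exactly those they satisfy in $M\restr Y$, by the preservation just noted; hence $a\sim_{M,i}b$ iff $a\sim_{M\restr Y,i}b$. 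This is precisely \eqref{eq:upset:1a}, uniformly in $i$.

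For \eqref{eq:upset:c1}, recall that $\sim=\sim_{M,\omega}$ is induced by all $k$-formulas. Letting $i\to\omega$ in the same preservation argument, for $a,b\in Y$ we obtain $a\sim b$ iff $a$ and $b$ satisfy the same $k$-formulas in $M\restr Y$, i.e. iff $a\sim_{M\restr Y,\omega}b$. The hypothesis $\md{M\restr Y}\leq d$ means, by Proposition \ref{prop:md:TFAEmod}, that $\sim_{M\restr Y,d}$ already coincides with the equivalence induced by all $k$-formulas in $M\restr Y$. Combining the two gives $a\sim b$ iff $a\sim_{M\restr Y,d}b$ for $a,b\in Y$, which is \eqref{eq:upset:c1}. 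Equivalently, one may intersect \eqref{eq:upset:1a} over all $i$ and use that $\sim_{M\restr Y,i}$ stabilizes at $i=d$.

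For the second statement, let $G$ be a generated subframe of $F$ with domain $Y$, an upset of $F$, so that $G=F\restr Y$. By Proposition \ref{prop:mdFrame-via-models} it suffices to bound $\md{N}$ for every $k$-model $N$ on $G$. Given such an $N$, I would extend its valuation to a $k$-valuation on $F$ agreeing with $N$ on $Y$ --- for Kripke frames this is immediate, and for general frames it uses that each admissible set of $G$ is the restriction to $Y$ of an admissible set of $F$ --- obtaining a $k$-model $M$ on $F$ with $M\restr Y=N$. By \eqref{eq:upset:1a}, $\sim_{M,i}\restr Y\;=\;\sim_{N,i}$ for every $i$, so every index $i$ with $\sim_{N,i}\neq\sim_{N,i+1}$ also satisfies $\sim_{M,i}\neq\sim_{M,i+1}$; hence $\md{N}\leq\md{M}\leq\md{F}$. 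Taking the supremum over all $N$ yields $\md{G}\leq\md{F}$. The only genuinely delicate point is the valuation-extension step for general frames, requiring the admissible sets of $G$ to be restrictions of those of $F$; everything else is a routine transfer through the generated-submodel preservation of formulas.
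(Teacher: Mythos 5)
Your proof is correct and takes essentially the same route as the paper's: \eqref{eq:upset:1a} via generated-submodel truth preservation, \eqref{eq:upset:c1} by intersecting over all $i$ and using that $\sim_{M\restr Y,i}$ stabilizes at $i=d$, and statement (2) by transferring the (non-)coincidence of successive equivalences through \eqref{eq:upset:1a}. The only difference is that you make explicit the valuation-extension step (every model on $G$ arises as the restriction of a model on $F$), which the paper leaves implicit in its remark that the second statement is ``straightforward'' from \eqref{eq:upset:1a}.
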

\begin{proof}
\eqref{eq:upset:1a} is immediate from the Generated submodel lemma. \improve{\IS{Ref}}

 From this, we have $${\sim}\restr Y\;=\;(\bigcap_{i<\omega} \sim_{M,i})\restr Y\;=\;
\bigcap_{i<\omega} ({\sim_{M,i}}\restr Y)\;=\;\bigcap_{i<\omega} \sim_{M\restr Y,i}\;=\;\sim_{M\restr Y,d}.$$
Indeed, the first equality holds by the definition of $\sim_{M,\omega}$, the second follows from the definition of restriction, the third follows from \eqref{eq:upset:1a}, and the last holds because $\md{M\restr Y}\leq d$.
This
proves \eqref{eq:upset:c1}.

The second statement is straightforward from \eqref{eq:upset:1a}: if $\sim_{M,i}$ coincides with $\sim_{M,i+1}$,
then $\sim_{M\restr Y,i}$ coincides with $\sim_{M\restr Y,i+1}$.
\end{proof}

\smallskip

Now we use the Definability Lemma to state the following tool.
\begin{proposition}\label{prop:gen_subm-stable}
Assume that $M$ is a model on an $m$-transitive frame,
$Y$ an upset in $M$,
$\md{M\restr Y}\leq d <\omega$. Let
$Z\,=\,{\sim}[Y]$, that is, $Z$ is the union of all $\sim$-classes that intersect $Y$. Put $D=m+d+1$.
Then:
\begin{enumerate}
\item For each $a\in Z$, for each $b$ in $M$, \begin{equation}\label{eq:upset:1}
\text{ if  } a \sim_{M,D} b, \text{ then $a \sim b$ and hence, $b\in Z$};
\end{equation}
\item $Z$ is an upset in $M$;
\item $Z$ is definable in $M$ by a formula of modal depth $\leq D$;
\item $\md{M\restr Z}\leq D$.
\end{enumerate}
\end{proposition}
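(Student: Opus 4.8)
The plan is to derive all four statements from the Definability Lemma, so my first step is to bound the modal depth of $\beta(a,Y,\Phi)=\alpha(a)\con\gamma(Y,\Phi)$ by $D=m+d+1$. Since $\md{M\restr Y}\leq d$, Proposition~\ref{prop:md:TFAEmod} lets me take each defining formula $\alpha(\tau)$ of modal depth $\leq d$; moreover $\ff{Y}=Y/\sim_Y$ is finite (because $\sim_Y={\sim}\restr Y$ equals $\sim_{M\restr Y,d}$ by Proposition~\ref{prop:md:gen_subfr}), so the Definability Lemma applies. In $\gamma(Y,\Phi)$ the innermost conjuncts of \eqref{eq:Jank1} and \eqref{eq:Jank2} have the form $\alpha(\tau_1)\imp\Di\alpha(\tau_2)$, hence depth $\leq d+1$, while \eqref{eq:Jank4} has depth $\leq d$; the outer $\Box^*$, which has modal depth $m$, adds $m$. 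Thus $\md{\gamma(Y,\Phi)}\leq m+d+1=D$ and $\md{\beta(a,Y,\Phi)}\leq D$. I expect this depth count to be the only genuine computation, and it is exactly where the constant $D$ is forced.

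Granting this bound, statement~(1) is immediate: if $a\in Z$ then $a\sim a'$ for some $a'\in Y$, so the Definability Lemma gives $M,a\mo\beta(a',Y,\Phi)$; as this formula has depth $\leq D$ and $a\sim_{M,D}b$, also $M,b\mo\beta(a',Y,\Phi)$, whence $a'\sim b$ by the Definability Lemma again, and therefore $b\in Z$ and $a\sim b$ by transitivity of $\sim$. For statement~(3), the finiteness of $\ff{Y}$ together with the Definability Lemma gives $Z=\vext\bigl(\bigvee_{\tau\in\ff{Y}}\beta(a_\tau,Y,\Phi)\bigr)$, where $a_\tau$ is any representative of $\tau$; this is a finite disjunction of formulas of depth $\leq D$, so $Z$ is defined in $M$ by a formula of depth $\leq D$.

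For the upset property~(2) I would argue through the $\Box^*$-conjuncts of $\gamma$. Take $c\in Z$ and $c R_\Di b$; choosing $a'\in Y$ with $c\sim a'$ yields $M,c\mo\gamma(Y,\Phi)$ by the Definability Lemma. Since $R_\Di\subseteq R^*$ and $R^*$ is transitive, each conjunct $\Box^*\chi$ true at $c$ remains true at $b$, so $M,b\mo\gamma(Y,\Phi)$; moreover \eqref{eq:Jank4} forces $M,b\mo\alpha(\tau_b)$ for some $\tau_b\in\ff{Y}$, and hence $M,b\mo\beta(a_{\tau_b},Y,\Phi)$, giving $b\sim a_{\tau_b}\in Y$, i.e.\ $b\in Z$. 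This is the one place where $m$-transitivity enters (to propagate $\Box^*$ across the single modality $\Di$), and I regard this interplay between $\Box^*$ and $\Di$ as the genuinely structural point of the proof.

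Finally, statement~(4) follows by combining~(1) and~(2) with Proposition~\ref{prop:md:gen_subfr}. As $Z$ is an upset, \eqref{eq:upset:1a} identifies $\sim_{M\restr Z,i}$ with ${\sim_{M,i}}\restr Z$ for every $i<\omega$. Statement~(1) says that on $Z$ the equivalences $\sim_{M,D}$ and $\sim\,(={\sim_{M,\omega}})$ already coincide, so $\sim_{M\restr Z,D}=\sim_{M\restr Z,D+1}$, which by Proposition~\ref{prop:md:TFAEmod} is precisely $\md{M\restr Z}\leq D$.
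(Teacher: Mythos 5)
Your proposal is correct and follows essentially the same route as the paper's proof: bound $\md{\beta(a,Y,\Phi)}$ by $D=m+d+1$, derive (1) and (3) directly from the Definability Lemma, prove (2) by propagating the $\Box^*$-guarded conjuncts of $\gamma$ along $R_\Di$ using $m$-transitivity together with \eqref{eq:Jank4}, and obtain (4) from (1), (2), Proposition \ref{prop:md:gen_subfr}, and Proposition \ref{prop:md:TFAEmod}. Your write-up is in fact slightly more explicit than the paper's at the two terse points (the depth count for $\gamma$ and the derivation of (4)), but the underlying argument is identical.
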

\begin{proof}
\hide{
First, we claim that the restriction ${\sim}\restr Y$ is the equivalence $\sim_{M\restr Y,d}$:
  for each $a,b\in Y$,
\begin{equation}\label{eq:upset:c1}
a \sim b\text{ iff } a \sim_{M\restr Y,d} b.
\end{equation}
To show \eqref{eq:upset:c1},
note that since $M \restr Y$ is a generated submodel of $M$, then
  for each $i<\omega$, for each $a,b\in Y$, we have
\begin{equation}\label{eq:upset:1a}
  a\sim_{M,i} b \text{ iff }a\sim_{M\restr Y,i} b
\end{equation}
 From this, we have $${\sim}\restr Y\;=\;(\bigcap_{i<\omega} \sim_{M,i})\restr Y\;=\;
\bigcap_{i<\omega} ({\sim_{M,i}}\restr Y)\;=\;\bigcap_{i<\omega} \sim_{M\restr Y,i}\;=\;\sim_{M\restr Y,d}.$$
Indeed, the first equality holds by the definition of $\sim_{M,\omega}$, the second follows from the definition of restriction, the third follows from \eqref{eq:upset:1a}, and the last holds because $\md{M\restr Y}=d$.
This
proves \eqref{eq:upset:c1}.
}

By \eqref{eq:upset:c1},
 there are only finitely many ${\sim}{\restr}Y$-classes, and we can apply Definability lemma.
Let $\Phi$  be defined according to \eqref{eq:defL:defOfPsi} in Subsection \ref{subsubs:defLem}. Since $\md{M\restr Y}\leq d$, we can assume that
$\md{\alpha}\leq d$ for each $\alpha\in \Phi$ (Proposition \ref{prop:md:TFAEmod}). It follows that $\md{\gamma(Y,\Phi)}\leq m+d+1=D$,
and so,
\begin{equation}\label{eq:upset:1d}
\md{\beta(a,Y,\Phi)}\leq D \text{ for all $a\in Y$}.
\end{equation}

To check \eqref{eq:upset:1}, assume $a\in Z$ and $a \sim_{M,D} b$.
By the definition of $Z$, we have $a\sim a'$  for some $a'\in Y$.
We have $M,a'\mo \beta(a',Y,\Phi)$.
Hence, $M,a\mo \beta(a',Y,\Phi)$.
By \eqref{eq:upset:1d},  $M,b\mo\beta(a',Y,\Phi)$. By the Definability Lemma, $b\sim a'$, and so $b\sim a$, and consequently, $b\in Z$. This proves \eqref{eq:upset:1}.

To show that $Z$ is an upset, assume that $a\sim a'\in Y$ and
$a R_\Di b$ for some $\Di\in\Al$ and some $b$. We have $M,a \mo \gamma(Y,\Phi)$, which has two consequences:
firstly, $M,b \mo \gamma(Y,\Phi)$ (by the definition of $\gamma$ and pretransitivity of the frame of $M$),
and secondly, $M,b\mo \alpha(b')$ for some $b' \in Y$ (due to \eqref{eq:Jank4} in the  definition of $\gamma$). So,
$M,b\mo \beta(b',Y,\Phi)$, and by the Definability Lemma, $b'\sim b$. So $b\in Z$, as required.

It is immediate that $Z$ is definable by a finite disjunction of formulas $\beta(a,Y,\Phi)$: simply make $a$ range over representatives of ${\sim}\restr Y$-classes.

According to \eqref{eq:upset:1}, we have $a \sim_{M,D} b$ iff $a \sim_{M,D+1} b$
for all $a,b \in Z$.
Since $Z$ is an upset in $M$, by \eqref{eq:upset:1a} we have $\md{M\restr Z}\leq D$.
\end{proof}

 \improve{
\IS{
  If $\md{\clF}$ is finite, then $\clF$ is pretransitive.
}
}

\subsection{Computing the modal depth: basic semantic tool}

For a class $\clF$ of frames, its {\em transitivity index $\tra{\clF}$} is defined as the
transitivity index $\tra{L}$ of its logic $L=\Log{\clF}$.
\begin{theorem}\label{thm:md-basic}
Let $\clF$ be a class of 
Kripke
frames, $L$ its logic.
Then $$\md{L}\leq \md{\clF}+\tra{\clF}+1.$$
\end{theorem}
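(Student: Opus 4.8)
The plan is to reduce the statement to the disjoint-sum-closed case handled by Theorem \ref{thm:md:disjclosed}, and then to bound the modal depth of a single disjoint sum by applying the stable-top estimate of Proposition \ref{prop:gen_subm-stable} to each summand. We may assume $d := \md{\clF} < \omega$ and $m := \tra{\clF} < \omega$, since otherwise the asserted inequality is vacuous.

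First I would let $\clG$ be the class of all disjoint sums $\bigsqcup_{i\in I} F_i$ with $I$ countable and each $F_i\in\clF$. Since validity is both preserved and reflected by disjoint sums (each summand is a generated subframe), we have $\Log(\clG)=\Log(\clF)=L$; and $\clG$ is clearly closed under countable disjoint sums. Hence Theorem \ref{thm:md:disjclosed} yields $\md{L}=\md{\clG}=\sup\{\md{G}\mid G\in\clG\}$, and it suffices to prove $\md{G}\le d+m+1$ for every $G=\bigsqcup_{i\in I} F_i\in\clG$.

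Now fix such a $G$. Since each $F_i$ is $m$-transitive and the relations of a disjoint sum never cross summands, $G$ is $m$-transitive. By Proposition \ref{prop:mdFrame-via-models} it is enough to bound $\md{M}$ for an arbitrary $k$-model $M=(G,\v)$; write $\sim$ for $\sim_{M,\omega}$ and put $D=m+d+1$. Fix a point $a$ of $G$, lying in some summand with domain $X_i$. Then $X_i$ is an upset of $M$, and $\md{M\restr X_i}\le \md{F_i}\le \md{\clF}=d$ by Proposition \ref{prop:mdFrame-via-models}. Applying Proposition \ref{prop:gen_subm-stable} with $Y=X_i$ and $Z=\sim[X_i]\ni a$, its first statement gives that for every $b$ in $M$, if $a\sim_{M,D} b$ then $a\sim b$. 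As $a$ was arbitrary, $\sim_{M,D}\subseteq \sim_{M,\omega}$; the reverse inclusion is automatic, so $\sim_{M,D}=\sim_{M,\omega}$ and thus $\md{M}\le D$. Taking the supremum over all models gives $\md{G}\le D=d+m+1$, whence $\md{L}=\md{\clG}\le \md{\clF}+\tra{\clF}+1$.

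The one delicate point is the cross-summand stabilization. A direct back-and-forth (bisimulation) argument relating two summands loses one modal level per round, so it does not by itself show that the refinement $\sim_{M,0}\supseteq\sim_{M,1}\supseteq\cdots$ halts at a uniform depth. What makes the argument work is precisely Proposition \ref{prop:gen_subm-stable}: through the Definability Lemma and $m$-transitivity it manufactures, for each $\sim$-class meeting the low-depth part $X_i$, a defining formula of modal depth at most $m+d+1$, which is exactly what lets us recover the lost levels and pin the stabilization depth to $d+m+1$. I expect checking the hypotheses of Proposition \ref{prop:gen_subm-stable} (that each summand is an upset of modal depth $\le d$ and that $G$ is $m$-transitive) to be routine, so the essential content of the theorem is inherited from that proposition and from Theorem \ref{thm:md:disjclosed}.
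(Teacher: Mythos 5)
Your proposal is correct and follows essentially the same route as the paper: the paper likewise reduces to a disjoint sum (it takes, for each $k$, an exact $k$-model of $L$ built as a disjoint sum of models on frames from $\clF$, ``like in Theorem~\ref{thm:md:disjclosed}''), applies Proposition~\ref{prop:gen_subm-stable} to each summand $Y_i$ with $D=m+d+1$, and concludes from the fact that the sets $Z_i={\sim}[Y_i]$ cover the model that $\sim_{M,D}\,=\,\sim_{M,\omega}$. Your only deviation is packaging: you invoke Theorem~\ref{thm:md:disjclosed} as a black box on the sum-closure $\clG$ instead of repeating its exact-model construction, which is an inessential difference.
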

\begin{proof}
Let  $\md{\clF}=d$. Assume that $d$ is finite, for otherwise there is nothing to prove.
In this case the logic $L$ of $\clF$ is locally tabular by \eqref{eq:find-lf},
and so  $m=\tra{\clF}$ is finite as well (Proposition \ref{prop:pretr}).

Let $k<\omega$. Like in Theorem \ref{thm:md:disjclosed}, we consider an exact $k$-model
$M$ of $L$, which is the disjoint sum of a family of $k$-models $(M_i)_{i\in I}$, whose frames $F_i$ are in $\clF$.
Let $Y_i$ be the domain of $M_i$.

Each $Y_i$ is an upset in $M$. We also have $\md{M_i}\leq d$ for all $i\in I$ (Proposition \ref{prop:mdFrame-via-models}).
Let $Z_i\,=\,{\sim}[Y_i]$, where as usual $\sim$ abbreviates $\sim_{M,\omega}$.
Put $D=m+d+1$.
By Proposition \ref{prop:gen_subm-stable}, for each $i\in I$, for each $a$ in $Z_i$  and $b$ in $M$, we have:
\begin{equation}\label{eq:md:themDisj}
\text{ If  } a \sim_{M,D} b, \text{ then } a \sim b.
\end{equation}
Note that the union $\bigcup_I Z_i$ contains all points in $M$. Hence, \eqref{eq:md:themDisj} holds for all $a,b$ in $M$.
Thus, $\sim_{M,D}\;=\;\sim$, and so $\md{M}\leq D$.
By Proposition \ref{prop:md:TFAEmod}, every $k$-formula in $M$ is equivalent to a formula of modal depth $\leq D$. Since $M$ is an exact $k$-model of $L$, the theorem follows.
\end{proof}

From this theorem and Proposition \ref{prop:mdF-leq-mdL}, we obtain:
\begin{corollary}
The modal depth of the logic of a  class $\clF$  of
Kripke
frames is finite iff
the modal depth of $\clF$ is finite.
\end{corollary}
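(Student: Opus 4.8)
The plan is to obtain the corollary as an immediate consequence of the two results just established, pairing the lower bound of Proposition~\ref{prop:mdF-leq-mdL} with the upper bound of Theorem~\ref{thm:md-basic}. Writing $L = \Log{\clF}$, the claim amounts to showing that $\md{\clF} < \omega$ holds if and only if $\md{L} < \omega$, and each direction is already packaged in a preceding statement.

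First I would treat the direction from the logic to the class. By Proposition~\ref{prop:mdF-leq-mdL}, we have $\md{\clF} \leq \md{\Log{\clF}}$ for any class of general frames, and Kripke frames are a special case; thus $\md{\clF} \leq \md{L}$. In particular, if $\md{L}$ is finite then so is $\md{\clF}$. This half needs nothing beyond quoting the proposition.

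For the converse, from the class to the logic, I would invoke Theorem~\ref{thm:md-basic}, which gives $\md{L} \leq \md{\clF} + \tra{\clF} + 1$. The only point that requires any care is that the right-hand side be finite: we assume $\md{\clF}$ is finite, but we must also know that the transitivity index $\tra{\clF}$ is finite. This is precisely where finite modal depth does real work. By Proposition~\ref{prop:md-lt}, finiteness of $\md{\clF}$ (equivalently, of the modal depth of $L$ on this class) forces $L$ to be locally tabular, hence $1$-finite; then by Proposition~\ref{prop:pretr}, $L$ is pretransitive, so $\tra{\clF} = \tra{L} < \omega$. Consequently the entire upper bound $\md{\clF} + \tra{\clF} + 1$ is finite whenever $\md{\clF}$ is, giving $\md{L} < \omega$.

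Since both inequalities are already proved, I do not expect any genuine obstacle here: the corollary is literally the conjunction of the two bounds. The one conceptual subtlety worth flagging is the implicit appeal to pretransitivity to keep $\tra{\clF}$ finite, which is what prevents the upper bound from being vacuous; notably, this finiteness is already incorporated into the proof of Theorem~\ref{thm:md-basic} itself, so in the write-up it suffices simply to cite Proposition~\ref{prop:mdF-leq-mdL} and Theorem~\ref{thm:md-basic}.
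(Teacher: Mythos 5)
Your proposal is correct and matches the paper's intent exactly: the corollary is left as an immediate consequence of Proposition~\ref{prop:mdF-leq-mdL} (for the direction from $\md{L}$ to $\md{\clF}$) and Theorem~\ref{thm:md-basic} (for the converse), with the finiteness of $\tra{\clF}$ secured inside the proof of that theorem via Proposition~\ref{prop:md-lt} and Proposition~\ref{prop:pretr}, just as you flag. Nothing further is needed.
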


\later{\IS{ARITHMETIC!! DC}
\begin{example}
  Due to this theorem and Example \ref{ex:DL-S5-clusterMD}, we obtain that $\md{\LS{5}}\leq 2$ and $\md{\DL}\leq 2$. The first bound is not optimal, since $\md{\LS{5}}=1$, while $\md{\DL}=2$ due to \cite{Sheht-MD-16}.
  \improve{wording}
\end{example}
}

\smallskip

\subsection{Computing the modal depth via height and clusters}~

\smallskip

Our next aim is to prove an analog of the cluster criterion, Theorem \ref{thm:clust-crit}, for the modal depth.

\smallskip

Let $\min F$ denote the union of minimal clusters in $F$.

\begin{lemma}\label{lem:uni-top-down}
Let $Y$ be an upset in an $m$-transitive Kripke frame $F$, and let the  complement of $Y$ be  included in $\min F$. Assume that
$\md{(F\restr \min F)}\leq c$,
$\md{F\restr Y}\leq d$.
Then \begin{equation}\label{eq:md-up-down-lemma}
\md{F}\leq d+m+c+1.
\end{equation}
\end{lemma}
\begin{proof}
We consider the only interesting case when $c,d,m$ are finite. Put $E=d+m+c+1$.
Consider
$k<\omega$ and a $k$-model $M=(F,\v)$. By Proposition \ref{prop:mdFrame-via-models}, it suffices to show that
\begin{equation}\label{eq:topdown-1}
 \md{M}\leq E.
\end{equation}
It turn, for \eqref{eq:topdown-1}, it is enough to show that the set of $\sim_{M,E}$-classes is a tuned partition of $F$ (Proposition \ref{prop:md:TFAEmod}). This is our goal.

\medskip

Firstly, we  analyze an upper part of the model $M$.

We start with a simple observation about the equivalences on $Y$.  As before, let $\sim$ denote $\sim_{M,\omega}$.
By the first statement of Proposition \ref{prop:md:gen_subfr}, we have:
\begin{eqnarray}
   &&\text{For each $i\geq d$, we have ${\sim_{M,i}}\restr Y={\sim_{M,d}}\restr Y$, and} \label{eq:topdown-Y-i}
   \\
   \label{eq:topdown-Y}
   &&{\sim}\restr Y\;=\;{\sim_{M,d}}\restr Y  \;=\;{\sim_{M,E}}\restr Y.
\end{eqnarray}
Hence, the sequence of equivalences $\sim_{M,i}$ stabilizes on $Y$ at $i=d$.

Next, we 
stabilize equivalences on the set \mbox{$Z\;=\;{\sim}[Y]$}.
By Proposition \ref{prop:gen_subm-stable}, we have: $Z$ is an upset in $M$;  on $Z$,
$\sim$ coincides with $\sim_{M,D}$ for $D=d+m+1.$
Clearly, $E\geq D$, so
$${\sim}\restr Z \;=\; {\sim_{M,D}}\restr Z\;=\;{\sim_{M,E}}\restr Z.$$
Hence, equivalences $\sim_{M,i}$ stabilize on $Z$ at $i=D$.

Let $\clS$ be the family  of ${\sim}$-classes that are included in $Z$.
By Proposition \ref{prop:md:TFAEmod},
\begin{equation}\label{eq:S-tuned}
\text{$\clS$ is a tuned partition of $F\restr Z$.}
\end{equation}
This means that we have achieved our goal for the upper part $M\restr Z$ of $M$.

\medskip
Secondly, we consider the remaining, bottom, part of $M$.

Let $X$ be the complement of $Z$ in $F$.
Since $Z$ in an upset in $F$, $X$ is a downset in $F$. Also, it is contained in $\min F$.
In follows that $X$ is a union of some minimal clusters in $F$. Hence, $G=F\restr X$ is a generated subframe of
$F\restr \min F$. By the second statement of Proposition \ref{prop:md:gen_subfr},
$$\md{G}\leq c.$$

\medskip
Now we will combine above observations to get a tuned partition of the whole model $M$.
Note that for each  $i\geq D$, we have:
\begin{equation}\label{eq:topdown-2}
\text{Every $\sim_{M,i}$-class is a subset of either $Z$  or $X$.}
\end{equation}
This is another consequence of Proposition \ref{prop:gen_subm-stable}: $Z$ in definable in $M$ by a formula of modal depth $\leq D$.

Put $\clV=\{\v(p_i)\mid i<k\}$.
Let $\clV_i$ and $\sim_{\clV,i}$ denote the partitions and equivalences induced by $\clV$ in $F$, as defined in Section \ref{sec:fmd-defs}.
By Proposition \ref{prop:md-models},  $\sim_{\clV,i}$ is $\sim_{M,i}$, and so
$\clV_i$ is the quotient set of the domain of $M$ modulo $\sim_{M,i}$.

We claim that for each $a,b$ in $X$, $\Di\in \Al$, we have:
\begin{equation}\label{eq:topdown-2b}
 \text{If $a\sim_{M,d+1} b$ and $U\in \clS$, then  ($a\in R_\Di^{-1}[U]$ iff $b\in R_\Di^{-1}[U]$).}
\end{equation}
\improve{Rewrite in form of \eqref{eq:tunedpair}!}
Indeed, if $a R a'$ for some  $a\in X$ and $a'\notin X$, then $a'$ is not in the same cluster with $a$ (recall that $X$ is a union of clusters). In this case $a'$ does not belong to $\min F$. Hence, $a'\in Y$.
Now \eqref{eq:topdown-2b}  follows from \eqref{eq:topdown-Y-i} and the property \eqref{eq:tunedpair}.

According to \eqref{eq:topdown-2}, for $i\geq D$, we have:
\begin{equation}\label{eq:clV-containsS}
\text{$\clV_i$ is the disjoint union of
$\clS$ and the family
$\clU_i=\{V\in \clV_i\mid V\subseteq X\}.$}
\end{equation}

Let $\clW=\clU_{D}$.
Consider the partitions $\clW_i$ and equivalences $\sim_{\clW,i}$  induced by $\clW$ in $G$.
By induction on $i$, we show that
\begin{equation}\label{eq:topdown-3}
\clW_i=\clU_{D+i}.
\end{equation}
The basis holds, since $\clW$ is a partition of $X$, and so $\clW_0=\clW=\clU_D$.
Assume $i>0$. From the induction hypothesis and \eqref{eq:topdown-2b}, it follows that
for each $a,b\in X$,
$$
a \sim_{\clW,i} b  \text{ iff }  a \sim_{\clV,D+i} b.
$$
So the quotients of $X$ modulo $\sim_{\clW,i}$ and modulo $\sim_{\clV,D+i}$ coincide.
The former quotient is $\clW_i$, and the latter is $\clU_{D+i}$, which proves the inductive step.

Since $\md{G}\leq c$, $\clW_c$ is tuned in $G$, and so by \eqref{eq:topdown-3},
\begin{equation}\label{eq:clU-tuned-inG}
\text{$\clU_{D+c}$  is tuned in $G$.}
\end{equation}
By  \eqref{eq:clV-containsS},
$\clV_{D+c}$ is the disjoint union of
$\clS$ and
$\clU_{D+c}$.
From \eqref{eq:clU-tuned-inG}, \eqref{eq:S-tuned}, and the unitive \eqref{eq:topdown-2b}, it easily follows that $\clV_{D+c}$ is tuned in $F$.
\end{proof}
\todo{
Looks like this proof gives $$\md{F}\leq d+1+\max\{m,c\}:$$
~\\
After $d+1+c$ -- all from $X$ is tuned in $F$; \\
After $d+1+m$ -- all from $Z$ is tuned in $F$.

}

\begin{theorem}\label{thm:md:clusters}
Let $\clF$ be a class of Kripke frames such that $\h{\clF}\leq h$ for a positive finite $h$.
Assume that the modal depth $d$ of the logic $\Log\clust{\clF}$ is finite. Then the modal depth of $\Log\clF$ is also finite, and specifically
\begin{equation}\label{eq:MD-clusters}
\md{\Log\clF}\leq (d+m+1)h-m-1,
\end{equation}
where $m=\tra{\clF}$.
\end{theorem}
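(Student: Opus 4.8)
The plan is to reduce everything to Lemma \ref{lem:uni-top-down} by peeling off the bottom layer of clusters and inducting on the height. Write $L=\Log\clF$, $m=\tra{\clF}$, and $E=(d+m+1)h-m-1$. The first observation I would record is that the minimal layer is cheap: if $H$ is any frame which is a disjoint union of clusters each isomorphic to a member of $\clust{\clF}$, then $\Log H\supseteq\Log\clust{\clF}$ (a disjoint sum validates the intersection of the logics of its summands), so by the definition of $d=\md{\Log\clust{\clF}}$ every formula is $\Log\clust{\clF}$-equivalent, hence $H$-equivalent, to a formula of depth at most $d$; by Proposition \ref{prop:md:TFAEmod} and Proposition \ref{prop:mdFrame-via-models} this gives $\md H\le d$. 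This is the step that quietly absorbs the fact that disjoint sums can raise modal depth: that increase is already accounted for inside the number $d=\md{\Log\clust{\clF}}$, rather than having to be paid for by the Definability Lemma overhead.

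The core is an induction on height. I would prove: for every $m$-transitive Kripke frame $H$ with $\h H\le n$ all of whose cluster-restrictions lie in $\clust{\clF}$, one has $\md H\le(d+m+1)n-m-1$. For $n=1$ every cluster of $H$ is minimal, so $H$ is a disjoint union of $\clust{\clF}$-clusters and $\md H\le d=(d+m+1)\cdot1-m-1$ by the observation above. For the step, set $Y=H\setminus\min H$. Then $Y$ is an upset, $H\restr Y$ is a generated subframe whose clusters are again in $\clust{\clF}$ and whose height is at most $n-1$, so the induction hypothesis gives $\md{H\restr Y}\le(d+m+1)(n-1)-m-1$; meanwhile $H\restr\min H$ is a disjoint union of $\clust{\clF}$-clusters, so $\md{H\restr\min H}\le d$. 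Feeding these into Lemma \ref{lem:uni-top-down} (with its role of $c$ played by $d$) yields
\[
\md H\le \md{H\restr Y}+m+d+1\le (d+m+1)(n-1)-m-1+m+d+1=(d+m+1)n-m-1 ,
\]
which is exactly the claimed bound; the telescoping of the per-level overhead $m+d+1$ against the factor $(d+m+1)$ is what collapses the correction to a single $-m-1$.

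Finally I would pass from frames to the logic by the exact-model device used in Theorem \ref{thm:md:disjclosed} and Theorem \ref{thm:md-basic}: for each $k$ realize an exact $k$-model $M$ of $L$ as a disjoint sum of $k$-models on frames of $\clF$. The underlying frame $F^\ast$ is $m$-transitive, of height at most $h$, and has all cluster-restrictions in $\clust{\clF}$, so the induction (at $n=h$) gives $\md{F^\ast}\le E$, whence $\md M\le\md{F^\ast}\le E$ by Proposition \ref{prop:mdFrame-via-models}; since $M$ is exact, Proposition \ref{prop:md:TFAEmod} forces every $k$-formula to be $L$-equivalent to one of depth at most $E$, and letting $k$ vary gives $\md L\le E$. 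The main obstacle, and the point I would be most careful about, is precisely the bookkeeping that keeps $F^\ast$ and all the frames arising in the induction (generated subframes and $\min$-restrictions) inside the class ``$m$-transitive, height $\le h$, cluster-restrictions in $\clust{\clF}$'', together with checking that $H\setminus\min H$ really is an upset whose height drops by one and whose minimal layer is a disjoint union of $\clust{\clF}$-clusters; once that is in place the arithmetic matching $(d+m+1)h-m-1$ falls out of the recursion.
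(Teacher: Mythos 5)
Your proof is correct and takes essentially the same route as the paper's: an induction on height that peels off $\min F$ and uses Lemma \ref{lem:uni-top-down} to pay the per-level cost $d+m+1$, with the base case absorbed into $d$. If anything, you are more careful than the paper at one point: the paper's inductive step only bounds $\md{F}$ for $F\in\clF$ and leaves the passage from the frame-level bound to $\md{\Log\clF}$ implicit, whereas your concluding exact-model argument (a disjoint sum over frames of $\clF$, which remains $m$-transitive, of height $\leq h$, with clusters in $\clust{\clF}$, as in Theorems \ref{thm:md:disjclosed} and \ref{thm:md-basic}) makes that passage explicit.
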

\begin{remark}
It is not given explicitly that $m$ is finite. 
However,
since the modal depth of the logic $\Log\clust{\clF}$ is finite, this logic
is locally tabular by Proposition \ref{prop:md-lt}, and so $\clust{\clF}$
is pretransitive by Proposition \ref{prop:pretr}.
It is easy to see that pretransitivity of clusters combined with the finite height of $\clF$ implies pretransitivity of $\clF$. So $m$ is finite.
\end{remark}
\begin{proof}
Let $\clF_n$ be the class of all $m$-transitive frames of height at most $n$ and whose clusters are in $\clust{\clF}$. Put $f(n)=(d+m+1)n-m-1$.
By induction on $n$, we show that for $n>0$,
\begin{equation}\label{eq:ind-in-fmd-cl}
\md{\Log{\clF_n}}\leq f(n).
\end{equation}

Let $n=1$. Observe that a cluster in an $m$-transitive frame is $m$-transitive. Hence,
$\clF_1$ is the class of all disjoint sums of clusters from $\clust{\clF}$, and so $\Log \clF_1 =\Log{\clust{\clF}}$. Hence, $\md{\Log{\clF_1}}=d=f(1)$, as required.

Assume $n>1$. Consider $F\in \clF_n$. Let $Y$ be the complement of $\min F$ in $F$.
Then $F\restr Y\in \clF_{n-1}$, so by induction hypothesis, $\md{F\restr Y}\leq f(n-1)$.
The frame $F\restr \min F$ is a disjoint sum of clusters from $\clust{\clF}$, and so $\md{(F\restr \min F)}\leq d$.
 By Lemma \ref{lem:uni-top-down}, $\md{F}\leq f(n-1)+m+d+1=f(n)$. It follows that $\md{\clF_n}\leq f(n)$.
Since $\clF_n$ is closed under disjoint sums, we have $\md{\clF_n}=
\md{\Log{\clF_n}}$ by Theorem \ref{thm:md:disjclosed}. This proves \eqref{eq:ind-in-fmd-cl}.

Since $\clF\subseteq \clF_h$, we have $\Log\clF_h\subseteq \Log\clF$, and so
$\md{\Log\clF}\leq \md{\Log\clF_h}$.
\end{proof}

\improve{
\begin{corollary}
Let $\clF$ be a class of $m$-transitive Kripke frames of uniform finite height $h>0$,  and assume that $\md{\clust{\clF}}\leq d$.
Then
\begin{equation}\label{eq:MD-clusters}
\md{\Log\clF}\leq  ....
\end{equation}
where $m=\tra{\clF}$.
\end{corollary}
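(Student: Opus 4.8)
The plan is to prove the bound first for individual frames and then transfer it to the logic. I begin with two preliminary reductions. Since $\md{\Log\clust{\clF}}=d$ is finite, Proposition~\ref{prop:mdF-leq-mdL} gives $\md{\clust{\clF}}\leq d$; in particular $\md{F\restr C}\leq d$ for every cluster $C$ of every $F\in\clF$. Moreover, as in the remark, $\Log\clust{\clF}$ is locally tabular by Proposition~\ref{prop:md-lt}, hence $\clust{\clF}$ is pretransitive by Proposition~\ref{prop:pretr}, and together with the finite height of $\clF$ this forces $m=\tra{\clF}<\omega$. Finally, I replace $\clF$ by its closure $\clF^{\oplus}$ under countable disjoint sums; this changes neither the logic, nor the height, nor the class of cluster-restrictions, nor the transitivity index, so it is harmless, and it will let me appeal to Theorem~\ref{thm:md:disjclosed}.

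The core is the following frame-level claim, proved by induction on $h'$: every $m$-transitive Kripke frame $G$ of height at most $h'$, all of whose cluster-restrictions have modal depth at most $d$, satisfies $\md{G}\leq f(h')$, where $f(h')=(d+m+1)h'-m-1$. For $h'=1$ such a $G$ is a disjoint sum of its clusters, and since tuned partitions decompose across a disjoint sum, $\md{G}$ equals the supremum of $\md{G\restr C}$ over clusters $C$; this is at most $d=f(1)$.

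For the inductive step with $h'>1$, set $Y=G\setminus\min G$. I first check that $Y$ is an upset: if $a\in Y$ and $a\,R_\Di\,b$ for some $\Di\in\Al$, then the cluster of $a$ is non-minimal and lies below the cluster of $b$, so the cluster of $b$ is non-minimal as well and $b\in Y$; the complement of $Y$ is exactly $\min G$. Being the restriction to an upset, $G\restr Y$ is a generated subframe, so it remains $m$-transitive, has height at most $h'-1$, and its cluster-restrictions are among those of $G$ and hence of modal depth at most $d$; the induction hypothesis gives $\md{G\restr Y}\leq f(h'-1)$. Furthermore $G\restr\min G$ is a disjoint sum of the pairwise incomparable minimal clusters, so $\md{G\restr\min G}\leq d$. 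Now Lemma~\ref{lem:uni-top-down}, applied with $Y$ as above, gives $\md{G}\leq f(h'-1)+m+d+1$, and the identity $f(h'-1)+m+d+1=(d+m+1)(h'-1)+d=f(h')$ closes the induction.

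Applying the claim to each $F\in\clF^{\oplus}$ yields $\md{\clF^{\oplus}}\leq f(h)$. Since $\clF^{\oplus}$ is closed under countable disjoint sums, Theorem~\ref{thm:md:disjclosed} gives $\md{\Log\clF}=\md{\Log\clF^{\oplus}}=\md{\clF^{\oplus}}\leq f(h)=(d+m+1)h-m-1$, as claimed. I expect the main difficulty to be organizational rather than conceptual: Lemma~\ref{lem:uni-top-down} already performs the analytic work of stabilizing the equivalences one cluster-layer at a time, so the delicate points are stating the induction hypothesis over arbitrary frames rather than over $\clF$ (which is not closed under restriction to upsets), verifying that passing to the upset $Y$ preserves both $m$-transitivity and the cluster-depth bound, and checking that the per-layer increment $m+d+1$ telescopes exactly into the stated closed form for $f(h)$.
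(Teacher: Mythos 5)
Your scaffolding (frame-level induction on height, Lemma \ref{lem:uni-top-down} for the inductive step, closing $\clF$ under countable disjoint sums, and finishing with Theorem \ref{thm:md:disjclosed}) is exactly the skeleton of the paper's proof of Theorem \ref{thm:md:clusters}, and those parts are sound. The gap is the claim, used both in your base case and to get $\md{(G\restr\min G)}\leq d$ in the inductive step, that tuned partitions ``decompose across a disjoint sum'', so that the modal depth of a disjoint union of clusters is the supremum of the depths of the clusters. For the paper's frame-level notion of $\md{}$ this is false. Let $G=F_1\sqcup F_2$, where $F_1$ is a single reflexive point $x$ and $F_2$ is a single irreflexive point $y$. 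Both summands are cluster-frames of modal depth $0$, but in $G$ the trivial partition $\{\{x,y\}\}$ is refined at stage $1$ by $R^{-1}[\{x,y\}]=\{x\}$, so $\md{G}\geq 1$. Frame-level depth is simply not preserved by disjoint sums; this is precisely why Theorem \ref{thm:md:clusters} assumes finiteness of the depth of the \emph{logic} $\Log\clust{\clF}$ rather than of the class $\clust{\clF}$, why Proposition \ref{prop:mdF-leq-mdL} is only an inequality, and why Theorem \ref{thm:md-basic} carries the correction term $\tra{\clF}+1$.

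The damage is not merely local: with the corollary's class-level hypothesis, your closed form is false. Take $\clF=\{F_1\sqcup F_2\}$ as above: a class of $0$-transitive Kripke frames of uniform height $h=1$ with $\md{\clust{\clF}}=0$, so your bound $(d+m+1)h-m-1$ evaluates to $0$; yet $\md{\Log\clF}\geq 1$, since $\Di\top$ is not $\Log\clF$-equivalent to any modality-free formula (under the valuation sending all variables to $\emp$, such a formula defines $\emp$ or the whole carrier, never $\{x\}$). Note also that your opening sentence derives the hypothesis $\md{\clust{\clF}}\leq d$ from ``$\md{\Log\clust{\clF}}=d$ finite'', which is the theorem's hypothesis, not the corollary's; this conflation of the two levels is the root of the problem. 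The repair, which is presumably how the paper intends the corollary to follow from Theorem \ref{thm:md:clusters}, is: clusters of $m$-transitive frames are themselves $m$-transitive, so Theorem \ref{thm:md-basic} yields $\md{\Log\clust{\clF}}\leq\md{\clust{\clF}}+m+1\leq d+m+1$, and Theorem \ref{thm:md:clusters} applied with $d+m+1$ in place of $d$ gives $\md{\Log\clF}\leq(d+2m+2)h-m-1$. Equivalently, you can keep your induction but must charge an extra $m+1$ each time you bound the depth of a disjoint union of clusters (the base case and the $\min$-layer); the stronger bound $(d+m+1)h-m-1$ is only available when $d$ bounds the modal depth of the logic of the clusters, which is strictly stronger than bounding the depth of the class.
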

}

This theorem is a helpful tool for finite modal depth results.
For a pretransitive logic $L$, let $L[h]$ be the extension of $L$ with the finite height formula $B_h^*$.

\begin{example}
To the best of our knowledge, the sharpest known upper bound for logics above
$\LS{4}$ is $\md{\LS{4}[h]}\leq 4h-3$ \cite{Sheht-MD-16}.
This bound can be refined due to Theorem \ref{thm:md:clusters}. Namely,
the transitivity index $m$ of these logics is 1.
The clusters in preorders are frames of the form $(X,X\times X)$. Their logic is $\LS{5}$, whose modal depth is 1. Hence,
$\md{\LS{4}[h]}\leq 3h-2.$

For the Grzegorczyk logic $\Grz$, the estimate \eqref{eq:MD-clusters} in Theorem \ref{thm:md:clusters} coincides with that of
\cite{Sheht-MD-16}:
$\md \Grz[h]\leq 2h-2$.  Due to a more recent result \cite{DiplomMaroch}, $\md \Grz[h]=2h-2$, and so this estimate is optimal in this certain case. However, we do not claim that  \eqref{eq:MD-clusters}
is optimal in general.\footnote{
From the proof of Lemma \ref{lem:uni-top-down}, one can obtain the bound $\md{F}\leq d+1+\max\{m,c\}$,
which would be used for a refinement of \eqref{eq:MD-clusters}.
} 
\end{example}

\begin{example}
We claim that all 1-transitive unimodal locally tabular logics have the finite modal depth.

Consider the logic $\wK4{}$ given by the formula $\Di\Di p \imp \Di p\vee p$.
This is the least 1-transitive logic.
It is known to be the logic of the class of frames $(X,R)$ where $R\cup Id_X$ is transitive.

The logic of clusters in the case of $\wK4{}$-frames is the difference logic $\DL{}$. Its modal depth is 2 \cite{Sheht-MD-16}. Hence: $$\md{\wK4[h]}\leq 4h-2.$$

By Theorem \ref{thm:LTforLm}, $\wK4{}$  admits the finite height criterion. So above $\wK4{}$, local tabularity, finite modal depth, and finite height are equivalent.
\end{example}
\hide{
\begin{remark}
To the best of our knowledge, this example is knew.
\end{remark}
}

\improve{$\md{\LK{}+\Di p\imp \Box\Di p}\leq 5$.}

\hide{
\begin{corollary} For $h>0$, we have:
\begin{enumerate}
\item $\md{\LS{4}[h]}\leq 3h-2$;
\item $\md{\wK4[h]}\leq 4h-2$;
\end{enumerate}
\end{corollary}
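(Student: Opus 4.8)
The plan is to obtain both inequalities as direct instances of Theorem~\ref{thm:md:clusters}, refining the bounds already computed in the two preceding examples. In each case I would exhibit a class $\clF$ of Kripke frames of finite height $h$ whose logic is the one in question, and then read off the two parameters entering the bound \eqref{eq:MD-clusters}: the transitivity index $m=\tra{\clF}$ and the modal depth $d=\md{\Log\clust{\clF}}$ of the logic of clusters.

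For item~(1) I would take $\clF$ to be the class of all preorders (reflexive transitive frames) of height at most $h$. Since $\LS{4}$ is canonical and the finite-height axiom $B_h^{*}$ is canonical over it (Proposition~\ref{prop:BhCanon}), the logic $\LS{4}[h]$ is Kripke complete and coincides with $\Log\clF$; over the reflexive $\LS{4}$ the axiom $B_h^{*}$ reduces to the ordinary height axiom $B_h$, so the frames are exactly the preorders of height at most $h$. For a preorder $R$ is reflexive and transitive, so $R^{\leq 1}=Id\cup R=R=R^{*}$ and hence $m=1$. Each cluster $C$ consists of mutually $R$-related points, so its restriction is the universal frame $(C,C\times C)$; thus $\Log\clust{\clF}=\LS{5}$ and $d=\md{\LS{5}}=1$. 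Substituting $d=m=1$ into \eqref{eq:MD-clusters} gives $\md{\LS{4}[h]}\leq(1+1+1)h-1-1=3h-2$.

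For item~(2) I would take $\clF$ to be the class of all $\wK4$-frames of height at most $h$, that is, frames $(X,R)$ with $R\cup Id_X$ transitive; again $\wK4[h]$ is canonical, so it equals $\Log\clF$. Its defining formula $\Di\Di p\imp\Di p\vee p$ is exactly the $1$-transitivity axiom, whence $m=1$. The substantive step is the identification $\Log\clust{\clF}=\DL$. For distinct $a,b$ in a cluster $C$ there is an $R$-path from $a$ to $b$, and iterating transitivity of $R\cup Id$ along it gives $a\,(R\cup Id)\,b$, hence $aRb$ since $a\neq b$; thus every cluster-restriction is a frame $(C,S)$ with $\neq_C\subseteq S\subseteq C\times C$, and conversely each such frame is a single $\wK4$-cluster because $S\cup Id_C=C\times C$ is transitive. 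Each $(C,S)$ validates $\DL$: such an $S$ differs from the symmetric relation $\neq_C$ only on the diagonal, so it is symmetric and $p\imp\Box\Di p$ holds; and a two-step $S$-path $a\,S\,b\,S\,c$ either has $c=a$, giving the disjunct $p$ at $a$, or has $c\neq a$, giving $aSc$ from $\neq_C\subseteq S$, so $\Di\Di p\imp\Di p\vee p$ holds as well. Since the irreflexive clusters $(C,\neq_C)$ lie in $\clust{\clF}$ and have logic $\DL$, we conclude $\Log\clust{\clF}=\DL$ and $d=\md{\DL}=2$ by \cite{Sheht-MD-16}. Substituting $d=2$, $m=1$ into \eqref{eq:MD-clusters} gives $\md{\wK4[h]}\leq(2+1+1)h-1-1=4h-2$.

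The main obstacle is the two-sided cluster computation in item~(2): one must both pin down $\clust{\clF}$ via the $R\cup Id$-transitivity path argument and verify that adjoining reflexive loops within a cluster does not push the logic beyond $\DL$, which is where the semantic check of the two $\DL$-axioms on every $(C,S)$ with $\neq_C\subseteq S$ is needed. Item~(1) is routine by comparison, the only point worth isolating being the reduction of $B_h^{*}$ to $B_h$ over $\LS{4}$.
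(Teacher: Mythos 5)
Your proposal is correct and follows essentially the same route as the paper's own argument: both apply Theorem~\ref{thm:md:clusters} with transitivity index $m=1$, identifying the cluster logics as $\LS{5}$ (so $d=1$) for $\LS{4}[h]$ and as $\DL$ (so $d=2$) for $\wK4[h]$, and substituting into the bound $(d+m+1)h-m-1$. The paper states these ingredients without detail, whereas you additionally spell out the (correct) routine justifications -- Kripke completeness of $\LS{4}[h]$ and $\wK4[h]$ via canonicity and Proposition~\ref{prop:BhCanon}, and the verification that every cluster $(C,S)$ with $\neq_C\subseteq S\subseteq C\times C$ validates the $\DL$ axioms.
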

\begin{proof}
The transitivity index $m$ of these logics is 1.

The clusters in preorders are frames of form $(X,X\times X)$. Their logic is $\LS{5}$, whose modal depth is 1. Now the first result follows.

The logic of clusters in the case of $\wK4{}$-frames is the difference logic $\DL{}$. Its modal depth is 2 \cite{ShehdmanMD-DL}. This implies the second result.
\end{proof}
}

Theorem \ref{thm:md:clusters} provides the following version of the finite height criterion for modal depth.
\begin{corollary}\label{cor:md-cluster-crit}
Let $\clF$ be a class of Kripke frames such that the class $\clust{\clF}$ has finite modal depth.
Then
$\clF$ has finite modal depth iff it has finite height.
\end{corollary}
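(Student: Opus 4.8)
The plan is to prove the two implications separately, observing that the hypothesis on $\clust{\clF}$ is needed only for one of them. Throughout I will use the corollary following Theorem~\ref{thm:md-basic}, by which, for a class of Kripke frames, finiteness of the modal depth of the class and of its logic are equivalent; so I read ``$\clF$ has finite modal depth'' as $\md{\clF}<\omega$ and as $\md{\Log\clF}<\omega$ interchangeably, and likewise for $\clust{\clF}$.

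For the implication from finite modal depth to finite height I would not use the cluster hypothesis at all. Assume $\md{\clF}<\omega$; then $L:=\Log\clF$ has finite modal depth, so $L$ is locally tabular by Proposition~\ref{prop:md-lt}, and in particular $1$-finite. By Theorem~\ref{thm:1-finite-to-m-h} this gives $\h{L}<\omega$; since a non-pretransitive logic has height $\omega$ by convention, $L$ is pretransitive, say $m=\tra{L}<\omega$ (cf.\ Proposition~\ref{prop:pretr}). Writing $h=\h{L}$, both $\atr_\Al(m)$ and the height formula $B_h^*=B_h^{\leq m}$ lie in $L=\Log\clF$, hence are valid on every $F\in\clF$. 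Validity of $\atr_\Al(m)$ makes each such $F$ $m$-transitive by \eqref{pretr:sem}, and then validity of $B_h^{\leq m}$ yields $\h{F}\leq h$ by \eqref{eq:heigth-m}. Taking the supremum over $F\in\clF$ gives $\h{\clF}\leq h<\omega$, so $\clF$ has finite height.

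For the converse I would quote Theorem~\ref{thm:md:clusters} directly. Assume $\clF$ has finite height $h$; we may take $h>0$, since $h=0$ forces every frame to be empty and the logic to be inconsistent, whence the modal depth is trivially finite. The hypothesis gives $\md{\clust{\clF}}<\omega$, hence $\md{\Log\clust{\clF}}<\omega$, say equal to $d$. Theorem~\ref{thm:md:clusters} then applies and produces the explicit bound $\md{\Log\clF}\leq(d+m+1)h-m-1<\omega$, where $m=\tra{\clF}$ is finite by the remark accompanying that theorem. A final appeal to the corollary following Theorem~\ref{thm:md-basic} (or to Proposition~\ref{prop:mdF-leq-mdL}) turns this into $\md{\clF}<\omega$, so $\clF$ has finite modal depth.

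The main point requiring attention is, in the first implication, the passage from finiteness of $\h{L}$ (a statement about the logic) to finiteness of $\h{\clF}$ (a statement about the class). This is not purely syntactic: it rests on the fact that membership of $\atr_\Al(m)$ in $L$ forces uniform $m$-transitivity of every $F\in\clF$, so that the semantic characterisations \eqref{pretr:sem} and \eqref{eq:heigth-m} can be applied with one and the same $m$ to bound the height of each frame. The converse implication is, by contrast, essentially immediate once Theorem~\ref{thm:md:clusters} is in hand, so the substantive work lies entirely in that theorem.
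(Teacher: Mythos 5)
Your proposal is correct and takes essentially the same route the paper intends: the corollary is stated as an immediate consequence of Theorem~\ref{thm:md:clusters} (finite height plus the cluster hypothesis gives finite modal depth, with $m=\tra{\clF}$ finite by the accompanying remark), while the converse direction follows, exactly as you argue, from finite modal depth of $\Log\clF$ implying local tabularity (Proposition~\ref{prop:md-lt}), hence $1$-finiteness, hence finite height of the logic (Theorem~\ref{thm:1-finite-to-m-h}), transferred to the frames in $\clF$ via validity of $\atr_\Al(m)$ and $B_h^*$ together with \eqref{pretr:sem} and \eqref{eq:heigth-m}. Your explicit treatment of the $h=0$ edge case and your observation that the cluster hypothesis is used in only one direction are correct refinements, fully consistent with the paper.
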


This leads to the following equivalent reformulation of Problem \ref{prob:LT-md}.
\begin{problem}
Let $\clC$ be a uniformly tunable class of cluster-frames.  Does $\clC$ have the finite modal depth?
\end{problem}
This  problem is equivalent to Problem \ref{prob:LT-md} according to Corollary  \ref{cor:md-cluster-crit} and the characterization of locally tabular logics given in Corollary \ref{cor:tuned-for-frames}.
\later{
Note that the affirmative solution for these problems would imply that all logics admit the $k$-finiteness condition -- thus providing a solution of Problem \ref{prob:k-fin}.
 --- WHAT IS  $k$?
}

\improve{
Prove that every logic with finite md is the logic of a class of frames of uniform f.h. whose clusters have fmd (canonical model). It might be useful to state a separate proposition for the bottom part of a model $M$: If top is $D$-definable, $D>d$, and md of top $\leq d$, and the bottom has md $\leq c$, then md of $M$ is $\leq c+d+m+1$.
}

\section{Acknowledgements}

I am grateful to Valentin Shehtman for valuable discussions.

The work on this paper was supported by NSF Grant DMS - 2231414.

\bibliographystyle{amsalpha}
\bibliography{LT-FinH}
\end{document}